\newcommand{\fA}{\mathfrak A}
\newcommand{\fD}{\mathfrak D}
\newcommand{\chim}{\Chi_{\bar M}}
\newcommand{\spwhat}{\;\what{ }\;}
\newtheorem{thm}{Theorem}[section]
\newtheorem*{thm*}{Theorem}
\newtheorem{lem}[thm]{Lemma}
\newtheorem{cor}[thm]{Corollary}
\newtheorem{prop}[thm]{Proposition}
\newtheorem{obs}[thm]{Observation}
\theoremstyle{definition}
\newtheorem{defn}[thm]{Definition}
\newtheorem{notn}[thm]{Notation}
\newtheorem{hyp}[thm]{Hypothesis}
\newtheorem*{notn*}{Notation}
\newtheorem*{hyp*}{Hypothesis}
\newtheorem{rem}[thm]{Remark}
\newtheorem*{rem*}{Remark}
\numberwithin{equation}{section}
\newcommand{\secref}[1]{Section~\textup{\ref{#1}}}
\newcommand{\thmref}[1]{Theorem~\textup{\ref{#1}}}
\newcommand{\corref}[1]{Corollary~\textup{\ref{#1}}}
\newcommand{\lemref}[1]{Lemma~\textup{\ref{#1}}}
\newcommand{\propref}[1]{Proposition~\textup{\ref{#1}}}
\newcommand{\hypref}[1]{Hypothesis~\textup{\ref{#1}}}
\newcommand{\midtext}[1]{\quad\text{#1}\quad}
\newcommand{\righttext}[1]{\quad\text{#1 }}
\renewcommand{\and}{\midtext{and}}
\renewcommand{\for}{\righttext{for}}
\newcommand{\all}{\righttext{for all}}
\renewcommand{\)}{\textup)}
\newcommand{\ie}{\emph{i.e.}}
\newcommand{\cf}{\emph{cf.}}
\newcommand{\etc}{\emph{etc.}}
\newcommand{\N}{\mathbb N}
\newcommand{\Z}{\mathbb Z}
\newcommand{\Q}{\mathbb Q}
\newcommand{\BB}{\mathcal B}
\newcommand{\PP}{\mathcal P}
\newcommand{\RR}{\mathcal R}
\newcommand{\MM}{\mathcal M}
\newcommand{\UU}{\mathcal U}
\renewcommand{\AA}{\mathcal A}
\renewcommand{\SS}{\mathcal S}
\newcommand{\Nb}{\bar N}
\newcommand{\Chi}{\raisebox{2pt}{\ensuremath{\chi}}}
\renewcommand{\epsilon}{\varepsilon}
\renewcommand{\a}{\alpha}
\DeclareMathOperator{\aut}{Aut}
\DeclareMathOperator{\supp}{supp}
\DeclareMathOperator*{\spn}{span}
\DeclareMathOperator*{\clspn}{\overline{\spn}}
\DeclareMathOperator*{\invlim}{\varprojlim}
\newcommand{\case}{& \text{if }}
\newcommand{\<}{\langle}
\renewcommand{\>}{\rangle}
\renewcommand{\iff}{\ensuremath{\Leftrightarrow}}
\newcommand{\minus}{\setminus}
\newcommand{\inv}{^{-1}}
\renewcommand{\bar}{\overline}
\newcommand{\what}{\widehat}
\newcommand{\wilde}{\widetilde}
\begin{document}
\title[Cuntz-Li algebras]{A crossed-product approach to the Cuntz-Li algebras}

\author[Quigg]{John Quigg}
\address{School of Mathematical and Statistical Sciences
\\Arizona State University
\\Tempe, Arizona 85287}
\email{quigg@asu.edu}

\author[Kaliszewski]{S. Kaliszewski}
\address{School of Mathematical and Statistical Sciences
\\Arizona State University
\\Tempe, Arizona 85287}
\email{kaliszewski@asu.edu}

\author[Landstad]{Magnus~B. Landstad}
\address{Department of Mathematical Sciences\\
Norwegian University of Science and Technology\\
NO-7491 Trondheim, Norway}
\email{magnusla@math.ntnu.no}

\subjclass[2010]{Primary 46L55, 46L05; Secondary 11R04, 11R56}

\keywords{$C^*$-crossed product, number field, adele ring}

\begin{abstract}
Cuntz and Li have defined a $C^*$-algebra associated to any integral domain, using generators and relations, and proved that it is simple and purely infinite and that it is stably isomorphic to a crossed product of a commutative $C^*$-algebra. We give an approach to a class of $C^*$-algebras  containing those studied by Cuntz and Li, using the general theory of $C^*$-dynamical systems associated to certain semidirect product groups. Even for the special case of the Cuntz-Li algebras, our development is new.
\end{abstract}
\maketitle

\section{Introduction}
\label{intro}

For an integral domain $R$, in \cite{CLintegral} Cuntz and Li define a remarkable $C^*$-algebra $\fA[R]$ via generators and relations.
Using only the relations, they show that $\fA[R]$ is simple and purely infinite.
They then show that $\fA[R]$ is isomorphic to a corner of
the crossed product of a commutative $C^*$-algebra by an action of the $ax+b$ group of the quotient field $Q(R)$.

Our purpose here is to give an alternative approach to $\fA[R]$, essentially the reverse of that in \cite{CLintegral}: we \emph{begin} with a crossed product,
which we show is simple and purely infinite using the theory of $C^*$-dynamical systems.
Then we show that a certain corner of this crossed product
is generated by elements satisfying relations similar to those of Cuntz and Li, 
and that this corner $C^*$-algebra is universal for these relations.
It follows that this corner is in fact isomorphic to the algebra $\fA[R]$ of \cite{CLintegral}.
Of course, this isomorphism is not surprising, since $\fA[R]$ is simple by the results of \cite{CLintegral}; the point is, though, that we do not \emph{assume} the simplicity result of \cite{CLintegral}, rather we \emph{deduce} it as a consequence of our results.
The embedding of the Bost-Connes algebra in  $\fA[R]$ is now easy to explain, since it is a corner of a subalgebra of the crossed product obtained by cutting down with the  same projection.

Other approaches to the Cuntz-Li algebra are given in 
\cite{BoavaExelDomains, BHLRboundary, CLintegral2, DuncanDomains, LarsenLi, LarsenLi2, LiRing, Yamashita}.

We actually do most of our work in a somewhat more general context than \cite{CLintegral}:
we use a semidirect product that incorporates the essential features of the $ax+b$ group of the integral domain $R$, but there is no ring.
More precisely, we have a semidirect product $G=N\rtimes H$ satisfying certain mild conditions regarding a certain normal subgroup $M$ of $N$.
We make heavy use of the completion $\bar N$ relative to the subgroup topology generated by the conjugates of $M$.

In \secref{G} we use standard $C^*$-crossed-product theory,
specifically results of Archbold, Laca, and Spielberg,
to prove that,
if $D$ denotes the $C^*$-algebra generated by the characteristic functions of the cosets in $N$ of the conjugates of $M$,
then $D=C_0(\bar N)$ and the reduced crossed product $D\rtimes_r G$ is simple and purely infinite.

In \secref{universal-sec} we show that the full crossed product $D\rtimes G$ is the enveloping $C^*$-algebra of an algebraic crossed product $D_0\rtimes G$, which in turn is universal for appropriate covariant representations of $(D_0,G)$.
Assuming that the action of $G$ on $D$ is ``regular'' in the sense that $D\rtimes G=D\rtimes_r G$, we prove that $D_0\rtimes G$ has a unique $C^*$-norm,
and consequently the corner $p(D\rtimes G)p$ is the enveloping $C^*$-algebra of $p(D_0\rtimes G)p$, where $p$ is the characteristic function of $M$.

In \secref{hecke} we show that $M$ is a Hecke subgroup of $G$, and the associated Hecke algebra $\MM$ has a universal $C^*$-algebra $C^*(\MM)$, which in turn embeds as a full corner of $C^*(\bar G)$ (where $\bar G$ is the completion relative to the subgroup topology.
When $N$ is appropriately self-dual, we conclude that $C^*(\MM)$ embeds faithfully in the corner $p(D\rtimes G)p$.

In \secref{integral} we specialize to the case where $G$ is the $ax+b$ group of the field of fractions
of an integral domain $R$.  In this case $G$ is amenable, so
the full crossed product $D\rtimes G \cong D\rtimes_r G$ is simple and purely infinite.

In \secref{BC} we specialize even further, as Cuntz and Li do, 
so that $R$ is the ring of integers of an algebraic number field 
$K=\Q(\theta)$. If $R=\Z[\theta]$, then $C^*(\MM)$ is the generalized Bost-Connes $C^*$-algebra. 
Anticipating the isomorphism $p(D\rtimes G)p\cong \fA[R]$ of \secref{ring},
the results of \secref{hecke} allow us to
recover the embedding of Cuntz and Li of the generalized Bost-Connes algebra into $\fA[R]$.

In \secref{further} we give an example in a different direction from the Cuntz-Li work, namely with $G$ a lamplighter group.

Cuntz and Li define their algebras using generators and relations. 
In \secref{gnr} we show that the corners $p(D\rtimes G)p$ and $p(D\rtimes H)p$ are universal $C^*$-algebras
for similar generators and relations.
We use standard dilation techniques as presented by Douglas. 

In \secref{ring} we use the universal property of \secref{gnr} to see that the corner
$p(D\rtimes G)p$ is isomorphic to the ring $C^*$-algebra $\fA[R]$ of Cuntz and Li.  
We thus recover their result that $\fA[R]$ is simple and purely infinite.

We are grateful to the referee for several helpful comments that have substantially improved the paper.


\section{Preliminaries}
\label{prelim}

Throughout this paper, 
we let $G=N\rtimes H$ be a (discrete) semidirect product group 
with normal subgroup $N$ and quotient group $H$, 
and we let $M{\neq \{e\}}$ be a normal subgroup of $N$.
We assume that the family
\[
\UU:=\{aMa\inv : a\in H\}
\]
of normal subgroups of $N$
is a \emph{filter base} in the sense that 
\begin{equation}
\label{filter base}
\text{for all $U,V\in\UU$ there exists $W\in\UU$ such that {$W\subseteq U\cap V$}},
\end{equation}
and that $\UU$ is \emph{separating} in the sense that
\begin{equation}
\label{separating}
\bigcap_{U\in\UU} U=\{e\}.
\end{equation}
Because we require $M\neq\{e\}$,
condition~\eqref{separating} implies that  $M\neq N$ and $H\neq\{e\}$.

We will at times also explicitly make one or both
of the following assumptions:
that $\UU$ has \emph{finite quotients} in the sense that
\begin{equation}\label{finite}
 \bigl|U/V\bigr|<\infty\righttext{whenever {$V\subseteq U$} in $\UU$,}
\end{equation}
and that $H$ acts \emph{effectively} on $M$, so that
\begin{equation}\label{effective}
\text{for each $a\neq e$ in $H$ there exists $s\in M$ such that $asa\inv\ne s$.}
\end{equation}

The following results are essentially \cite[Chapter~III, \S7.3, Corollary~2]{bourbaki:GTI}.

\begin{lem}
\label{completion}
Let $N$ be as above with the separating filter base $\UU$ of normal subgroups. Then the inverse limit
\[
\invlim_{U\in\UU}N/U
\]
is a Hausdorff completion of $N$.
\end{lem}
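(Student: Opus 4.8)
The plan is to build the Hausdorff completion explicitly as the inverse limit equipped with the inverse-limit (initial) topology, and then verify the two defining properties of a Hausdorff completion: that $\invlim_{U\in\UU}N/U$ is a Hausdorff topological group, and that the canonical map $N\to\invlim_{U\in\UU}N/U$ is a dense embedding with respect to the subgroup topology on $N$ generated by $\UU$. First I would recall that the subgroup topology on $N$ is the group topology for which $\UU$ is a neighbourhood base at $e$; this makes sense precisely because $\UU$ is a filter base by \eqref{filter base}, and each $U\in\UU$ is normal, so the left and right coset filters agree. Next I would endow each quotient $N/U$ with the discrete topology and give $\prod_{U\in\UU}N/U$ the product topology; the inverse limit $\L:=\invlim_{U\in\UU}N/U$, consisting of the coherent tuples $(x_UU)_U$ with $x_UU\subseteq x_VV$ whenever $V\subseteq U$, is then a closed subgroup of this product, hence itself a topological group, and it is Hausdorff because a product of discrete (hence Hausdorff) groups is Hausdorff and Hausdorffness passes to subspaces.

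The canonical homomorphism $\iota\colon N\to\L$ sends $x$ to the tuple $(xU)_U$. Its kernel is $\bigcap_{U\in\UU}U=\{e\}$ by the separating condition \eqref{separating}, so $\iota$ is injective. That $\iota$ is a homeomorphism onto its image follows by comparing neighbourhood bases: a basic neighbourhood of $\iota(e)$ in $\L$ is obtained by fixing finitely many coordinates $U_1,\dots,U_n$ to be trivial, and by \eqref{filter base} there is a single $W\in\UU$ with $W\subseteq\bigcap_i U_i$, whose preimage under $\iota$ is exactly the corresponding basic neighbourhood of $e$ in the subgroup topology; conversely each $U\in\UU$ is the $\iota$-preimage of the basic open set cutting out the trivial coset in coordinate $U$. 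Finally, density of $\iota(N)$ is a diagonal/approximation argument: given a coherent tuple $(x_UU)_U\in\L$ and a basic neighbourhood determined by coordinates $U_1,\dots,U_n$, choose $W\in\UU$ with $W\subseteq\bigcap_i U_i$ using \eqref{filter base}; coherence forces the $U_i$-coordinates of the tuple to be the images of $x_W$, so $\iota(x_W)$ lies in the prescribed neighbourhood of $(x_UU)_U$.

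The step I expect to be the main (though still modest) obstacle is the density argument, because it is the only place where one must genuinely use the coherence relations defining the inverse limit together with \eqref{filter base} simultaneously; one has to observe that a single group element $x_W\in N$ can be chosen to simultaneously represent the tuple in all of finitely many prescribed coordinates, which relies on $W$ refining all of them and on the compatibility of the coset projections $N/W\to N/U_i$. Everything else — that $\L$ is a group, that it is Hausdorff, that $\iota$ is a topological embedding — is routine verification of definitions, and indeed this is why the statement is attributed to \cite[Chapter~III, \S7.3, Corollary~2]{bourbaki:GTI}. I would remark in passing that the finite-quotient hypothesis \eqref{finite} is \emph{not} needed here; it will only enter later, when one wants $\bar N$ to be locally compact (each $N/U$ finite) so that $C_0(\bar N)$ is a sensible object.
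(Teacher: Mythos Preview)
Your proposal is correct and is the standard direct verification; the paper itself gives no proof but simply attributes the result to \cite[Chapter~III, \S7.3, Corollary~2]{bourbaki:GTI}, whose argument is essentially the one you have written out. Your remark that \eqref{finite} is not needed here is also accurate and matches the paper's organization, since finite quotients only enter in \lemref{locally compact}.
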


\begin{lem}
\label{N/U}
For each $U\in\UU$, the map
$xU\mapsto x\bar U$
gives a group isomorphism of $N/U$ onto $\Nb/\bar U$. 
\end{lem}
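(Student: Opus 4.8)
The plan is to use the inverse-limit description $\Nb=\invlim_{V\in\UU}N/V$ from \lemref{completion}, together with the canonical homomorphism $\iota\colon N\to\Nb$, $x\mapsto(xV)_{V\in\UU}$, which is injective because $\UU$ is separating. Fix $U\in\UU$ and let $\pi_U\colon\Nb\to N/U$ be the coordinate projection; this is a continuous group homomorphism with $\pi_U\circ\iota$ equal to the quotient map $q_U\colon N\to N/U$, so in particular $\pi_U$ is surjective. Hence $\pi_U$ induces a group isomorphism of $\Nb/\ker\pi_U$ onto $N/U$, and everything comes down to identifying $\ker\pi_U$ with $\bar U$, the closure of $\iota(U)$ in $\Nb$.

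Since $\pi_U(\iota(U))=\{eU\}$, and $\ker\pi_U=\pi_U\inv(\{eU\})$ is closed (in fact open, as $N/U$ is discrete), we have $\bar U\subseteq\ker\pi_U$. For the reverse inclusion, note that because $\UU$ is a filter base a neighborhood base at a point $(x_V)_V\in\Nb$ is given by the sets $\{(y_V)_V\in\Nb:y_W=x_W\}$ with $W\in\UU$. So fix $(x_V)_V\in\ker\pi_U$, meaning $x_U=eU$, and fix $W\in\UU$; using \eqref{filter base} choose $W'\in\UU$ with $W'\subseteq W\cap U$. The coordinate $x_{W'}$ maps to $x_U=eU$ under the bonding map $N/W'\to N/U$, so $x_{W'}$ lies in $\ker(N/W'\to N/U)$, which, since $W'\subseteq U$, is exactly the image of $U$ in $N/W'$; pushing forward along $N/W'\to N/W$ shows that $x_W$ lies in the image of $\iota(U)$ under $\pi_W$. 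As $W$ was arbitrary, $(x_V)_V\in\bar U$. Therefore $\bar U=\ker\pi_U$, and in particular $\bar U$ is an open normal subgroup of $\Nb$.

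Putting the two steps together, $\pi_U$ descends to a group isomorphism $\Nb/\bar U\cong N/U$; composing its inverse with $q_U$ produces a surjection $N\to\Nb/\bar U$ carrying $x$ to $\iota(x)\bar U=x\bar U$, whose kernel is $\iota\inv(\bar U)=\ker(\pi_U\circ\iota)=U$. Thus this surjection descends to the asserted isomorphism $N/U\to\Nb/\bar U$, $xU\mapsto x\bar U$. The only step that takes real work is the identification $\bar U=\ker\pi_U$ --- that the closure of $\iota(U)$ is no larger than the evident open subgroup --- and this is precisely where the filter-base hypothesis \eqref{filter base} is used; the remaining assertions are routine diagram chases.
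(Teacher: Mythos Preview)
Your argument is correct. The paper does not actually prove this lemma: it is stated as one of a pair of results attributed to Bourbaki (\cite[Chapter~III, \S7.3, Corollary~2]{bourbaki:GTI}) and left without proof. Your direct verification via the inverse-limit model of $\Nb$ and the identification $\bar U=\ker\pi_U$ is exactly the kind of argument Bourbaki's result encapsulates, so there is nothing to compare beyond noting that you have supplied the details the paper chose to outsource.
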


\begin{lem}
\label{locally compact}
The completion $\Nb$ is locally compact if and only if there exists $U\in\UU$
such that
\[
\bigl|U/V\bigr|<\infty\righttext{for all $V\in\UU$ with $V\subseteq U$}.
\]
\end{lem}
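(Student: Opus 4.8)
The plan is to reduce local compactness of $\Nb$ to the compactness of a single basic neighborhood of the identity, and then to characterize that compactness by the displayed finiteness condition. Since each quotient $N/U$ is discrete, the canonical map $\Nb\to N/U$ is continuous, so its kernel $\bar U$ is an open --- hence also closed --- subgroup of $\Nb$, and $\{\bar U:U\in\UU\}$ is a neighborhood basis at the identity of $\Nb$. A topological group is locally compact precisely when its identity element has one compact neighborhood, and since left translations are homeomorphisms it follows that $\Nb$ is locally compact if and only if $\bar U$ is compact for some $U\in\UU$: given a compact neighborhood $K$ of $e$, choose $U\in\UU$ with $\bar U\subseteq K$, and observe that $\bar U$, being closed in $\Nb$, is closed in $K$ and hence compact; the converse is immediate because each $\bar U$ is itself a neighborhood of $e$.

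It remains to show, for a fixed $U\in\UU$, that $\bar U$ is compact if and only if $\bigl|U/V\bigr|<\infty$ for all $V\in\UU$ with $V\subseteq U$. First I would identify $\bar U$ as an inverse limit. By the filter base condition~\eqref{filter base} the family $\{V\in\UU:V\subseteq U\}$ is cofinal in $\UU$, so $\Nb$ may be computed as $\invlim_{V\in\UU,\,V\subseteq U}N/V$; unwinding the definition of the inverse limit and using compatibility of the coordinates identifies $\bar U=\ker(\Nb\to N/U)$ with $\invlim_{V\in\UU,\,V\subseteq U}U/V$, a closed subspace of the product $\prod_{V\subseteq U}U/V$. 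Applying \lemref{N/U} to the group $U$ with the induced separating filter base $\{V\in\UU:V\subseteq U\}$ gives $U/V\cong\bar U/\bar V$ for each such $V$, and this is discrete since $\bar V$ is open in $\bar U$. If every such $U/V$ is finite, then $\prod_{V\subseteq U}U/V$ is compact by Tychonoff's theorem and $\bar U$ is compact as a closed subspace. Conversely, if $\bar U$ is compact, then for each $V\subseteq U$ in $\UU$ the discrete group $\bar U/\bar V$ is a continuous image of $\bar U$, hence compact, hence finite. Combined with the previous paragraph this gives the asserted equivalence.

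I do not anticipate a serious obstacle here. The one point requiring care is the identification of $\bar U$ with $\invlim_{V\subseteq U}U/V$ as a \emph{topological} group --- one must verify that $\{V\in\UU:V\subseteq U\}$ is cofinal in $\UU$ and that the subspace topology that $\bar U$ inherits from $\Nb$ coincides with its inverse-limit topology --- after which the argument rests only on the elementary facts that a closed subspace of a compact space is compact, that a product of finite discrete sets is compact, and that a compact discrete group is finite.
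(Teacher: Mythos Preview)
Your proof is correct and follows essentially the same approach as the paper: both reduce to showing that some $\bar U$ is compact and then characterize this via the inverse limit $\bar U\cong\invlim_{V\subseteq U}U/V$, using Tychonoff for one direction and the fact that a compact discrete quotient $\bar U/\bar V\cong U/V$ must be finite for the other. Your presentation is slightly more explicit about why each $\bar U$ is open and about the cofinality step, but the argument is the same.
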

\begin{proof}
First assume that 
we have $U\in\UU$ such that $|U/V|<\infty$ whenever $V\in\UU$ and $V\subseteq U$, 
and put
\[
\UU'=\{V\in\UU:V\subseteq U\}.
\]
Since $\UU'$ is cofinal in $\UU$, we have a natural isomorphism
\[
\Nb\cong \invlim_{V\in\UU'}N/V.
\]
The closure $\bar U$ is a compact (and open) subgroup of the completion $\Nb$, because it is an inverse limit of the finite groups $\{U/V:V\in\UU'\}$. It follows that $\Nb$ is locally compact, because it contains a compact open subgroup.

Conversely, assume that $\Nb$ is locally compact, and choose a compact neighborhood $W$ of $e$. Since $\{\bar U:U\in\UU\}$ is a local base of $\Nb$, there exists $U\in\UU$ such that $\bar U\subseteq W$. Thus $\bar U$ is compact. Let $V\in\UU$ with $V\subseteq U$. Then $\bar V$ is an open normal subgroup of $\bar U$, so the quotient group $\bar U/\bar V$ is compact and discrete, and hence is finite.
Since $U/V\cong \bar U/\bar V$ by \lemref{N/U}, we have $|U/V|<\infty$.
\end{proof}

\section{The $ax+b$ group action}
\label{G}

We define the following ``$ax+b$''-type action $\beta$ of $G$ on $N$:
for $g\in G$, let $\beta_g\colon N\to N$ by
\[
\beta_g(y)=xaya\inv
\]
whenever $g=xa$ with $x\in N$ and $a\in H$;
thus $N\subseteq G$ acts on $N$ by left translation and $H\subseteq G$ acts by conjugation.
It is not hard to check that $\beta_g$ is a well-defined bijection of $N$ onto itself
(note that $\beta_g$ is a group homomorphism of~$N$ if and only if $g\in H$),
and that $g\mapsto \beta_g$ is a group homomorphism.  
In fact, $\beta$ corresponds to the left action of $G$ on $G/H$ under the natural identification
of $G/H = (NH)/H$ with $N$. 

Note that if $g=xa$ as above, for $y\in N$ and $U\in\UU$ we have
\begin{equation}\label{U}
\beta_g(yU)=(xaya\inv)(aUa\inv).
\end{equation}
Thus the set
\[
\PP=\{\Chi_{xU} : x\in N, U\in\UU\} \subseteq \ell^\infty(N)
\]
is invariant under the action
$\alpha$ of $G$ on $\ell^\infty(N)$ associated to $\beta$ 
in the usual way by
\[
\alpha_g(f)=f\circ\beta_{g\inv}.
\]
If we further define
\[
 D = C^*(\PP)\subseteq \ell^\infty(N),
\]
it follows that $\alpha$ restricts to an action, 
still denoted by $\alpha$,
of $G$ on the $C^*$-algebra $D$.

\begin{thm}
\label{simple}
With the above notation and assumptions, suppose that $\UU$ has finite quotients \(condition~\eqref{finite}\)
and that $H$ acts effectively on $M$ \(condition~\eqref{effective}\).  
Then the reduced crossed product $D\rtimes_{\alpha,r} G$ is simple and purely infinite.
\end{thm}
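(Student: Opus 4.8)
The plan is to realize $D$ as $C_0(\Nb)$ carrying the action $\alpha$ dual to an $ax+b$ action $\bar\beta$ of $G$ on the completion $\Nb$, and then to feed three properties of $\bar\beta$ — minimality, topological freeness, and local contractivity — into the Archbold--Spielberg and Laca--Spielberg criteria for the reduced crossed product of a dynamical system over a locally compact space. First, by \eqref{finite} and \lemref{locally compact}, $\Nb$ is locally compact and totally disconnected, and the closures $x\bar U$ ($x\in N$, $U\in\UU$) are compact open sets forming a basis for its topology (\lemref{N/U}); restriction along the dense inclusion $N\hookrightarrow\Nb$ (\lemref{completion}) is an injective $*$-homomorphism $C_0(\Nb)\to\ell^\infty(N)$ sending $\Chi_{x\bar U}$ to $\Chi_{xU}$, and its range is exactly the $C^*$-algebra generated by $\PP$; hence $D=C_0(\Nb)$. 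Each $\beta_g$ is a left translation composed with a continuous automorphism of $N$ (conjugation by the $H$-part of $g$ permutes $\UU$), so it extends to a homeomorphism $\bar\beta_g$ of $\Nb$, and a routine check on $N$ shows that $\alpha$ is the action dual to this $G$-action $\bar\beta$.

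Minimality is immediate and uses only density: a nonempty closed $\bar\beta$-invariant subset $C\subseteq\Nb$ is invariant under left translation by the dense subgroup $N$, hence dense, hence $C=\Nb$; equivalently $D$ has no nontrivial $\alpha$-invariant ideals. Local contractivity I would verify on the basic compact open sets $x\bar U$. Given $x\in N$ and $U=bMb\inv\in\UU$: since $\UU$ is a separating \eqref{separating} filter base \eqref{filter base} and $M\neq\{e\}$, there is $c\in H$ with $cMc\inv\subsetneq M$ (pick $e\neq s\in M$, then $U'\in\UU$ with $s\notin U'$, then $W\in\UU$ with $W\subseteq M\cap U'$, so $s\notin W$, and write $W=cMc\inv$). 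Then $a:=bcb\inv\in H$ satisfies $aUa\inv=b(cMc\inv)b\inv\subsetneq U$, so $\overline{aUa\inv}$ is a proper open subgroup of $\bar U$, compact by \eqref{finite} and \lemref{locally compact}. Setting $z:=x(axa\inv)\inv\in N$ and $g:=za$, one computes $\bar\beta_g(x\bar U)=z(axa\inv)\,\overline{aUa\inv}=x\,\overline{aUa\inv}$, a compact open set properly contained in $x\bar U$. Thus $\bar\beta$ contracts a basis of nonempty open sets with compact closures.

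The main work, and the step I expect to be the real obstacle, is topological freeness: for $g\neq e$ the fixed-point set $F_g=\{y\in\Nb:\bar\beta_g(y)=y\}$ has empty interior. Write $g=xa$, $x\in N$, $a\in H$. If $a=e$ then $x\neq e$ and $\bar\beta_g$ is left translation by $x$, so $F_g=\emptyset$. If $a\neq e$, a short computation shows that whenever $y_0\in F_g$ one has $F_g=y_0 S_a$, where $S_a=\{y\in\Nb:aya\inv=y\}$ is the closed subgroup of fixed points of the automorphism of $\Nb$ induced by conjugation by $a$; so it suffices to show $S_a$ has empty interior, and since a subgroup with nonempty interior is open while the $\bar U$ form a neighborhood base at $e$, this amounts to showing $S_a\not\supseteq\bar U$ for every $U\in\UU$. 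Given $U=bMb\inv$, the element $b\inv ab\in H$ is nontrivial because $a\neq e$, so by effectiveness \eqref{effective} there is $t\in M$ with $(b\inv ab)t(b\inv ab)\inv\neq t$; then $s:=btb\inv\in U$ and $asa\inv\neq s$, so $s\in U\setminus S_a$ and $\bar U\not\subseteq S_a$. This is the only place \eqref{effective} is used, and it is where the interaction of the two actions — translation by $N$ and conjugation by $H$ — genuinely enters.

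Finally, since $\bar\beta$ is minimal and topologically free, the Archbold--Spielberg theorem gives that $D\rtimes_{\alpha,r}G$ is simple; since in addition $\bar\beta$ is locally contractive (local contractivity produces an infinite projection in every nonzero hereditary subalgebra, via the canonical conditional expectation onto $D$), the Laca--Spielberg theorem gives that $D\rtimes_{\alpha,r}G$ is purely infinite. The hypothesis \eqref{finite} enters only through \lemref{locally compact}, to place us in the locally compact framework (so that $D=C_0(\Nb)$ and the contracting sets have compact closure).
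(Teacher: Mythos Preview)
Your proof is correct and follows essentially the same route as the paper: identify $D\cong C_0(\Nb)$ via restriction, verify that the extended action $\bar\beta$ is minimal, locally contractive, and topologically free, and then invoke the Archbold--Spielberg and Laca--Spielberg theorems. Your topological-freeness argument is organized slightly differently---you reduce to showing the fixed subgroup $S_a$ of conjugation by $a$ has empty interior, whereas the paper takes two points $y,z$ in a putative open set of fixed points and observes that $O\inv O$ is fixed by conjugation by $a$---but the substance is identical, and effectiveness of $H$ on $M$ enters in exactly the same way.
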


The first step of the proof is to identify the commutative $C^*$-algebra~$D$.
Notice that 
with $g=xa$ as at~\eqref{U}, 
$\beta_g$ induces a bijection of $\UU$ onto $\{ xU : U\in \UU \}$;
thus $\beta_g$ is a uniform isomorphism of $N$ (with the subgroup topology determined by $\UU$)
onto itself. 
It follows that the action $\beta$ of $G$ on $N$ extends 
uniquely to an action $\bar\beta$ of $G$ by homeomorphisms of the completion $\Nb$.
We let $\bar\alpha$ denote the associated 
action of $G$ on $C_0(\Nb)$, so that
\[
\bar\alpha_g(f)=f\circ\bar\beta_{g\inv}\quad\text{ for $f\in C_0(\Nb)$ and $g\in G$.}
\]

The following result is elementary, and we claim no originality; 
however, we could not find it in the literature, so we include the proof for completeness.

\begin{prop}
\label{extend}
If $\UU$ has finite quotients, then
the restriction map $\rho$ of $C_0(\Nb)$ into $\ell^\infty(N)$ given by 
\[
\rho(f) =  f|_N
\]
is an $\bar\alpha - \alpha$ equivariant isomorphism of  $C_0(\Nb)$ onto $D$.
\end{prop}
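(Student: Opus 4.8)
The plan is to check that $\rho$ is an injective, equivariant $*$-homomorphism --- which is essentially formal --- and then to squeeze its image between $D$ and $D$ via two inclusions. Injectivity is immediate: $N$ is dense in the Hausdorff completion $\bar N$ (\lemref{completion}), so a continuous function on $\bar N$ is determined by its restriction to $N$. Being a restriction map, $\rho$ is visibly a $*$-homomorphism, hence isometric, so $\rho(C_0(\bar N))$ is a $C^*$-subalgebra of $\ell^\infty(N)$. For equivariance one uses that $\bar\beta_g$ extends $\beta_g$ and that $\beta_g(N)=N$: for $f\in C_0(\bar N)$ and $y\in N$,
\[
\rho(\bar\alpha_g f)(y)=f(\bar\beta_{g\inv}y)=f(\beta_{g\inv}y)=(\rho f)(\beta_{g\inv}y)=\alpha_g(\rho f)(y),
\]
so $\rho\circ\bar\alpha_g=\alpha_g\circ\rho$.

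For the inclusion $D\subseteq\rho(C_0(\bar N))$, fix $U\in\UU$. As in the proof of \lemref{locally compact}, the closure $\bar U$ is an \emph{open} subgroup of $\bar N$ (since $\{\bar U:U\in\UU\}$ is a local base) which is moreover \emph{compact}, being the inverse limit of the finite groups $U/V$ ($V\in\UU$, $V\subseteq U$); this is where condition~\eqref{finite} enters. Hence for $x\in N$ the coset $x\bar U$ is compact and open, so $\Chi_{x\bar U}\in C_c(\bar N)\subseteq C_0(\bar N)$; and since $\bar N$ induces the original topology on $N$ and $xU$ is clopen in $N$, one has $x\bar U\cap N=xU$ (essentially \lemref{N/U}), whence $\rho(\Chi_{x\bar U})=\Chi_{xU}$. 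Thus $\PP\subseteq\rho(C_0(\bar N))$, and as the latter is a $C^*$-algebra, $D=C^*(\PP)\subseteq\rho(C_0(\bar N))$.

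For the reverse inclusion I would run a standard approximation argument. Given $f\in C_0(\bar N)$ and $\epsilon>0$, left uniform continuity of $f$ on the locally compact group $\bar N$ yields $U\in\UU$ with $|f(t)-f(s)|<\epsilon$ whenever $t\in s\bar U$. The set $\{|f|\ge\epsilon\}$ is compact, hence covered by finitely many of the (clopen, pairwise disjoint) cosets of $\bar U$, say $x_1\bar U,\dots,x_n\bar U$ with $x_i\in N$ (using density of $N$). Then $g=\sum_{i=1}^n f(x_i)\Chi_{x_i\bar U}$ satisfies $\|f-g\|_\infty\le\epsilon$: on each $x_i\bar U$ because $|f-f(x_i)|<\epsilon$ there, and off $\bigcup_i x_i\bar U$ because $|f|<\epsilon$ and $g=0$ there. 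Restricting to $N$, where the sup-norm is unchanged since $N$ is dense, shows $\rho(f)$ lies within $\epsilon$ of $\sum_i f(x_i)\Chi_{x_iU}\in\spn\PP\subseteq D$; letting $\epsilon\to0$ gives $\rho(f)\in D$. Combining the two inclusions, $\rho(C_0(\bar N))=D$, and together with the first paragraph $\rho$ is the asserted equivariant isomorphism.

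I expect the only step needing genuine care to be the last one: it rests on the (standard) fact that functions in $C_0$ of a locally compact group are left uniformly continuous, which is exactly where the topological-group structure of $\bar N$ --- not merely its being totally disconnected --- is used, forcing a single $U$ to work at every coset simultaneously. It is also worth flagging that condition~\eqref{finite} is genuinely needed and not just a convenience: without it some $\bar U$ need not be compact, $\Chi_{x\bar U}$ need not lie in $C_0(\bar N)$, and the inclusion $D\subseteq\rho(C_0(\bar N))$ can fail.
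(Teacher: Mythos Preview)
Your proof is correct and follows the paper's approach for injectivity, equivariance, and the inclusion $D\subseteq\rho(C_0(\bar N))$. The only genuine difference is in the reverse inclusion $\rho(C_0(\bar N))\subseteq D$: the paper bypasses your uniform-continuity approximation entirely by observing that the set $\SS=\{\Chi_{x\bar U}:x\in\bar N,\,U\in\UU\}$ generates $C_0(\bar N)$ as a $C^*$-algebra (because the cosets $x\bar U$ form a base of compact open sets), and since $\rho(\SS)=\PP$ one gets $\rho(C_0(\bar N))=C^*(\PP)=D$ in one stroke. Your explicit approximation is essentially a hands-on proof of that generation fact, so it buys self-containedness at the cost of some length; the paper's route is shorter but leaves the generation claim to the reader. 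Your closing remark about uniform continuity being the delicate step is thus a bit misplaced: that step can be avoided altogether.
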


\begin{proof}
Since $N$ is dense in $\Nb$, 
$\rho$ gives an isometric homomorphism of $C_0(\Nb)$ into $\ell^\infty(N)$.
The cosets $\{x\bar U:x\in \Nb,U\in\UU\}$ form a base for the topology of $\Nb$ consisting of compact sets
(see the proof of Lemma~\ref{locally compact}), 
so $C_0(\Nb)$ is generated by the set $\SS = \{\Chi_{x\bar U} : x\in \Nb, U\in\UU\}$
of characteristic functions.
Again since $N$ is dense, for each $x\in\Nb$ and $U\in\UU$ there exists $y\in N$ such that $x\bar U=y\bar U$.
Since $(\Chi_{y\bar U})|_N = \Chi_{yU}$ for such $y$ and $U$,
we have
\[
 \rho(\SS)  
= \{ \Chi_{yU} : y\in N, U\in\UU \} = \PP,
\]
and it follows that $\rho$ maps $C_0(\Nb)$ onto $D$. 

For the equivariance, since each $\bar\beta_g$ is the extension to $\Nb$ of $\beta_g$,
for $f\in C_0(\Nb)$ we have
\[
 \rho(\bar\alpha_g(f)) = (f\circ\bar\beta_{g\inv})|_N
= (f|_N)\circ\beta_{g\inv} = \alpha_g(\rho(f)).
\qedhere
\]
\end{proof}

Recall from \cite{AStopfree, LSboundary} that an action $\gamma$ of a discrete group $G$
on a locally compact Hausdorff space $X$ is:
\begin{enumerate}
\item \emph{minimal} if for every $x\in X$ the orbit $\{\gamma_g(x):g\in G\}$ is dense;

\item \emph{locally contractive} if for every nonempty open set $O\subseteq X$ there exist $g\in G$ and a nonempty open set $O'\subseteq O$ such that
\[
\bar{\gamma_g(O')}\subsetneq O';
\]

\item \emph{topologically free} if for every $g\in G\minus\{e\}$ the set
\[
\{x\in X:\gamma_g(x)=x\}
\]
of fixed points has empty interior.
\end{enumerate}
In \cite{AStopfree} the term ``local boundary action'' is used instead of ``locally contractive action''.

\begin{lem}
\label{minimal}
If $\UU$ has finite quotients, then $\Nb$ is locally compact
and the action $\bar\beta$ of $G$ on $\Nb$ is minimal and locally contractive.
\end{lem}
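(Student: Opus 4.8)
The plan is to establish the three conclusions in turn; the first two are quick and only local contractivity requires real work. For \emph{local compactness}, note that since $\UU$ has finite quotients the subgroup $U_0:=eMe\inv$ lies in $\UU$ and satisfies $|U_0/V|<\infty$ for all $V\in\UU$ with $V\subseteq U_0$, so $\Nb$ is locally compact by \lemref{locally compact}. For \emph{minimality}, observe that for $x\in N$ the homeomorphism $\bar\beta_x$ of $\Nb$ is the unique continuous extension of the left translation $y\mapsto xy$ of $N$, hence equals left translation by $x$ in the topological group $\Nb$. Therefore, for any $x\in\Nb$, the orbit $\{\bar\beta_g(x):g\in G\}$ contains $Nx$, which is the image of the dense subset $N$ under right translation by $x$ (a homeomorphism of $\Nb$), and is thus dense in $\Nb$.

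For \emph{local contractivity} I would use that the compact open cosets $\{y\bar U:y\in\Nb,\ U\in\UU\}$ form a base for the topology of $\Nb$ (from the proofs of \lemref{locally compact} and \propref{extend}), and that by density of $N$ such a coset may always be written with representative in $N$. Let $O\subseteq\Nb$ be nonempty and open, and pick a basic coset $O'=x\bar U\subseteq O$ with $x\in N$ and $U\in\UU$. The key step is to find $V\in\UU$ with $V\subsetneq U$: since $U$ is a conjugate of $M\neq\{e\}$ we have $U\neq\{e\}$, so we may choose $s\in U\minus\{e\}$, then $V_0\in\UU$ with $s\notin V_0$ by \eqref{separating}, and then $V\in\UU$ with $V\subseteq U\cap V_0$ by \eqref{filter base}, whence $s\notin V$ and so $V\subsetneq U$. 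Writing $U=cMc\inv$ and $V=aMa\inv$ and putting $b=ac\inv\in H$, one has $bUb\inv=V$. Now set $z=xbx\inv b\inv$, which lies in $N$ because $N$ is normal in $G$, and let $g=zb\in G$. Since $zbxb\inv=x$, formula \eqref{U} gives
\[
\beta_g(xU)=(zbxb\inv)(bUb\inv)=xV,
\]
and applying the homeomorphism $\bar\beta_g$ and taking closures yields $\bar\beta_g(O')=x\bar V$. Because $V\subsetneq U$ and $|U/V|<\infty$, \lemref{N/U} gives $\bar U/\bar V\cong U/V\neq\{e\}$, so $x\bar V\subsetneq x\bar U=O'$; and $x\bar V$ is compact, hence closed. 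Thus $\overline{\bar\beta_g(O')}=x\bar V\subsetneq O'$, as required.

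I expect the only point requiring any thought to be the production of a member of $\UU$ properly contained in $U$ — this is where the separating and filter-base hypotheses on $\UU$, together with the fact that every member of $\UU$ is a conjugate of $M$, come into play. After that, the contraction is simply the conjugation $\bar\beta_b$ followed by the left translation by $z\in N$ needed to re-center the image coset back at $x$, so I foresee no genuine obstacle.
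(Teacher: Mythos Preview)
Your proof is correct and follows essentially the same route as the paper: both reduce local contractivity to producing, for a basic coset $x\bar U$, an element $V\in\UU$ strictly contained in $U$, and then use the $H$-conjugation sending $U$ to $V$ followed by a left translation in $N$ to re-center. The only cosmetic difference is that the paper first fixes once and for all a $c\in H$ with $cMc\inv\subsetneq M$ and then conjugates by $b$ (where $U=bMb\inv$), whereas you produce $V\subsetneq U$ directly from \eqref{separating} and \eqref{filter base}; the two arguments are interchangeable.
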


\begin{proof}
Local compactness follows from Lemma~\ref{locally compact}.
For  minimality, let $x\in \Nb$.
Then the orbit of $x$ under the action of $G$ contains the coset $Nx$,
which is dense in $\Nb x=\Nb$.

For local contractivity,
since the cosets $\{y\bar U:y\in N,U\in\UU\}$ form a base for the topology of $\Nb$ consisting of closed sets,
it suffices to show that for every $y\in N$ and $U\in\UU$ there exists $g\in G$ such that
\[
\bar\beta_g(y\bar U)\subsetneq y\bar U.
\]
For this, first note that since $M\neq\{e\}$,
by~\eqref{separating} there exists $d\in H$ 
such that $dMd\inv \neq M$,
and then by~\eqref{filter base} we can choose $c\in H$
such that $cMc\inv \subseteq M\cap dMd\inv \subsetneq M$.
 
Now fix $b\in H$, so that $U=bMb\inv$ is an arbitrary element of $\UU$.
Setting $a=bcb\inv\in H$ gives
\[
 aUa\inv = b(cMc\inv)b\inv \subsetneq bMb\inv = U,
\]
and since the map $U\mapsto \bar U$ is injective on $\UU$
(Lemma~\ref{N/U}),
it follows that $\overline{aUa\inv} \subsetneq \bar U$.
Thus, for any $y\in N$,
if we further set $x = y(ay\inv a\inv) \in N$
and then $g=xa$, we have
\[
\bar\beta_g(y\bar U) 
= \overline{\beta_g(yU)} 
= \overline{(xaya\inv)(aUa\inv)}
= y\overline{aUa\inv} \subsetneq y\bar U,
\]
as desired.
\end{proof}

\begin{lem}
\label{free}
If 
$H$ acts effectively on $M$,
then the action $\bar\beta$ of $G$ on $\Nb$ is topologically free.
\end{lem}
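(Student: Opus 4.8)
The plan is to show that for each $g = xa \in G \setminus \{e\}$ (with $x \in N$, $a \in H$), the fixed-point set $\{z \in \Nb : \bar\beta_g(z) = z\}$ has empty interior in $\Nb$. I would split into two cases according to whether $a = e$ or $a \neq e$. If $a = e$, then $g = x$ with $x \neq e$, so $\bar\beta_g$ is translation by $x$ on $\Nb$; since $\Nb$ is a Hausdorff topological group, translation by a nontrivial element has no fixed points at all, so the fixed-point set is empty. Thus the substance is the case $a \neq e$.

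For $a \neq e$, effectiveness \eqref{effective} gives some $s \in M$ with $asa\inv \neq s$. I would first work at the group level in $N$: the equation $\beta_g(y) = y$ reads $xaya\inv = y$, i.e. $x = y (aya\inv)\inv = y (ay\inv a\inv)$, so the set of fixed points of $\beta_g$ in $N$ is either empty or a single coset of the fixed-subgroup $\{y \in N : aya\inv = y\}$ of the conjugation automorphism; in any case it is ``small''. The key point is that the same is true after completing: a basic open set in $\Nb$ has the form $z\bar U$ for $z \in \Nb$ and $U \in \UU$, and I want to produce a point of $z\bar U$ that $\bar\beta_g$ moves. Since $\UU$ is a filter base, I can shrink $U$ so that $U \subseteq M$ and, using the filter-base property again together with the fact that the topology on $\Nb$ separates points of $N$, arrange that $U$ is small enough that translating by $s$ (or by a suitable conjugate/translate of $s$) leaves $z\bar U$ but is detected inside a still-larger member of $\UU$. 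Concretely, because $asa\inv \ne s$, the element $t := s(asa\inv)\inv \ne e$, and by the separating property \eqref{separating} there is $V \in \UU$ with $t \notin V$; choosing $W \in \UU$ with $W \subseteq U \cap V$ and picking $y \in N$ with $zW = yW$ (density of $N$), I can perturb $y$ within $z\bar U$ by an element of $M$ so that the perturbed point $y'$ satisfies $\beta_g(y') (y')\inv = \beta_g(y)y\inv \cdot t^{\pm 1} \notin W$, hence $\bar\beta_g(y'\bar W) \neq y'\bar W$ and in particular $\bar\beta_g$ does not fix the point $y'$. Since $y' \in z\bar U$ was found in an arbitrary basic open set, the fixed-point set has empty interior.

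The main obstacle I anticipate is bookkeeping: making the perturbation argument precise so that the perturbed point genuinely lies in the prescribed basic open set $z\bar U$ while simultaneously being moved by $\bar\beta_g$. The cleanest route is probably to reduce everything to a single quotient $N/W$ with $W \in \UU$ chosen small enough (using \eqref{filter base} and \eqref{separating}) that both $U \supseteq W$ and the ``witness'' $t = s(asa\inv)\inv$ survives in $N/W$; then $\bar\beta_g$ descends, via \lemref{N/U}, to a well-defined map on the finite-or-discrete quotient $\Nb/\bar W \cong N/W$, where the fixed-point computation is purely algebraic and the perturbation by $s \in M \subseteq N$ is visibly available. One should double-check that conjugation by $a$ interacts correctly with the coset $zW$ — i.e. that $W = bMb\inv$ can be chosen inside $M$ and small, so that $aWa\inv$ also makes sense in the relevant quotient — but this is exactly the kind of manipulation already carried out in the proof of \lemref{minimal}, so it is routine rather than deep.
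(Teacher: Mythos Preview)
Your perturbation idea is sound in spirit, but there is a gap in the execution: you choose the witness $s\in M$ once and for all from the effectiveness hypothesis, and then propose to perturb a point $y\in z\bar U$ by $s$ (or by some translate of $s$) to obtain a non-fixed point still lying in $z\bar U$. But to remain inside the basic open set $z\bar U$ you must perturb by an element of $\bar U$, not merely of $M$; and since $U=bMb\inv$ for an \emph{arbitrary} $b\in H$, your fixed $s\in M$ need not lie in $U$. What you actually need is that for each $U\in\UU$ there exists $s\in U$ with $asa\inv\ne s$. This is true --- if $a$ centralized $bMb\inv$ then $b\inv ab$ would centralize $M$, contradicting effectiveness --- but you never establish it, and your hedge ``or by a suitable conjugate/translate of $s$'' does not pin this down. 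Once this is in place, the argument is clean: pick $y\in N\cap z\bar U$; if $y$ is not fixed, done; if $y$ is fixed, then for $s\in U$ with $asa\inv\ne s$ one computes $\beta_g(ys)(ys)\inv = y\,(asa\inv)s\inv\,y\inv\ne e$, so $ys\in z\bar U$ is moved. (Your displayed formula $\beta_g(y')(y')\inv=\beta_g(y)y\inv\cdot t^{\pm1}$ is only literally correct when $N$ is abelian; in general a conjugate of $t$ appears, though the conclusion survives.)

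The paper's argument sidesteps this matching problem entirely by arguing contrapositively and using a ``differencing'' trick: assuming an open set $O$ of fixed points (pulled back into $N$ by density), for \emph{any two} fixed points $y,z\in O$ one has $y\inv z = a(y\inv z)a\inv$, so the whole neighborhood $O\inv O$ of $e$ is centralized by $a$; then some $bMb\inv\subseteq O\inv O$ is centralized, so $b\inv ab$ centralizes $M$, forcing $a=e$ and then $x=e$. This reduces the affine fixed-point question to the conjugation action in one stroke, with no case split on $a$ and no need to locate a witness inside a prescribed $U$.
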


\begin{proof}
Let $g\in G\minus\{e\}$. We must show that the fixed-point set
\[
\{y\in \Nb:\bar\beta_g(y)=y\}
\]
has empty interior. Suppose not.
Since $N$ is dense in $\bar N$
there is a nonempty open set $O\subseteq N$ which is fixed pointwise by $\beta_g$. Let $g=xa$ with $x\in N$ and $a\in H$.
Now, for any $y,z\in O$ we have
\[
xaya\inv=y\and xaza\inv=z,
\]
so
\[
x=yay\inv a\inv=zaz\inv a\inv,
\]
and hence
\[
y\inv z=ay\inv a\inv aza\inv=ay\inv za\inv,
\]
i.e., the open neighborhood $O\inv O$ of $e$ in $N$ consists of fixed points for
conjugation by the element $a$ of $H$.
By definition of the subgroup topology, we have 
$bMb\inv\subseteq O\inv O$ for some $b\in H$. Thus for all $s\in M$ we have
\[
absb\inv a\inv=bsb\inv,
\]
so $b\inv ab$ acts trivially by conjugation on $M$. 
If $H$ acts effectively on $M$ we must then have $b\inv ab=e$, so $a=e$, 
which in turn forces $xy=y$ for all $y\in O$, so $x=e$, and hence $g=e$, giving a contradiction.
\end{proof}

\begin{proof}[Proof of Theorem~\ref{simple}]
Since $D\rtimes_{\alpha,r}G\cong C_0(\Nb)\rtimes_{\bar\alpha,r}G$ by
Proposition~\ref{extend},
it suffices to show that $C_0(\Nb)\rtimes_{\bar\alpha,r}G$ is simple and
purely infinite.
Simplicity follows from \cite[Corollary of Theorem~1]{AStopfree} and
\cite[Theorem~2]{AStopfree},
since $\bar\beta$ is minimal by Lemma~\ref{minimal} and 
topologically free by Lemma~\ref{free}.
The crossed product is purely infinite by \cite[Theorem~9]{LSboundary},
since $\bar\beta$ is 
locally contractive by Lemma~\ref{minimal}.
\end{proof}

\begin{rem}
Note that in \thmref{simple} the action of $G$ on $\what D=\bar N$ is not free: if $g\in H$ then $\beta_g(e)=e$.
\end{rem}


\section{Universal $C^*$-algebras}
\label{universal-sec}

In this section we show that $D\rtimes_\alpha G$ is the enveloping $C^*$-algebra of an ``algebraic crossed product'' (\corref{enveloping crossed product}). The main result of this section is that under an extra \hypref{regular} on the action (namely that the full and reduced crossed products coincide), a certain corner of the algebraic crossed product has a unique $C^*$-norm, and moreover the corresponding corner of $D\rtimes_\alpha G$ is its enveloping $C^*$-algebra (\corref{enveloping corner}).

Throughout this section we assume that $\UU$ has finite quotients (condition~\eqref{finite})
and that $H$ acts effectively on $M$ (condition~\eqref{effective}).  
Recall that by definition,
\[
 \PP = \{ \Chi_{xU} : x\in N, U\in \UU \}
\quad\text{and}\quad
D = C^*(\PP) \subseteq \ell^\infty(N).
\]
Now we further define
\[
 D_0 = \spn\PP \subseteq D.
\]

\begin{lem}
$D_0$ is a $*$-subalgebra of $\ell^\infty(N)$, and consequently
\[
D
=\bar{D_0}
=\clspn\PP.
\]
\end{lem}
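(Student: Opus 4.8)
The plan is to verify directly that $D_0=\spn\PP$ is closed under each of the $*$-algebra operations of $\ell^\infty(N)$, after which the displayed identity is purely formal. Being a linear span, $D_0$ is automatically a vector subspace, and it is self-adjoint because every generator $\Chi_{xU}$ is real-valued, so $(\Chi_{xU})^*=\Chi_{xU}$. The only point that needs an argument is closure under the (pointwise) multiplication of $\ell^\infty(N)$, and by bilinearity this reduces to showing $\Chi_{xU}\,\Chi_{yV}\in D_0$ for all $x,y\in N$ and $U,V\in\UU$.

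For this I would start from $\Chi_{xU}\,\Chi_{yV}=\Chi_{xU\cap yV}$. If the two cosets are disjoint this is $0\in D_0$, so assume $xU\cap yV\neq\emptyset$ and fix $z$ in the intersection; then $xU=zU$ and $yV=zV$, hence $xU\cap yV=zU\cap zV=z(U\cap V)$. Now I would invoke the hypotheses standing in this section: by the filter-base condition~\eqref{filter base} choose $W\in\UU$ with $W\subseteq U\cap V$, and by the finite-quotient condition~\eqref{finite}, applied to the pair $W\subseteq U$ in $\UU$, the index $|U/W|$ is finite; since $W\subseteq U\cap V$, the coset set $(U\cap V)/W$ injects into $U/W$, so there are finitely many $w_1,\dots,w_n\in U\cap V\subseteq N$ with $U\cap V=\bigsqcup_{i=1}^{n}w_iW$. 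Therefore
\[
\Chi_{xU}\,\Chi_{yV}=\Chi_{z(U\cap V)}=\sum_{i=1}^{n}\Chi_{zw_iW},
\]
and since each $zw_i\in N$ and $W\in\UU$, the right-hand side lies in $\spn\PP=D_0$. This proves $D_0$ is a $*$-subalgebra of $\ell^\infty(N)$.

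For the remaining identity, the norm closure $\bar{D_0}$ is then a $C^*$-subalgebra of $\ell^\infty(N)$ containing $\PP$, so $D=C^*(\PP)\subseteq\bar{D_0}$; conversely $\PP\subseteq D$ and $D$ is a closed subspace, so $\bar{D_0}=\bar{\spn\PP}\subseteq D$, giving $D=\bar{D_0}=\clspn\PP$. The only step with any content is the coset computation in the second paragraph --- specifically recognizing $xU\cap yV$ as a \emph{finite} union of $W$-cosets --- and this is exactly where both standing assumptions on $\UU$ (the filter-base property~\eqref{filter base} and finite quotients~\eqref{finite}) are used; everything else is formal, so I do not anticipate a genuine obstacle.
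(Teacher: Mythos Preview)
Your proof is correct and follows essentially the same approach as the paper. The only cosmetic difference is that the paper first decomposes each factor $\Chi_{xU}$ and $\Chi_{yV}$ separately as finite sums of $W$-cosets and then multiplies, whereas you first identify the product as $\Chi_{z(U\cap V)}$ and then decompose; both arrive at the same finite sum $\sum_{zW\in (xU\cap yV)/W}\Chi_{zW}$, and both rely on exactly the filter-base and finite-quotient hypotheses.
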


\begin{proof}

Clearly $D_0$ is a self-adjoint linear subspace of $\ell^\infty(N)$.
Fix $U,V\in\UU$, and 
choose $W\in \UU$ with $W\subseteq U\cap V$.
Then for any $x,y\in N$,
since $\UU$ has finite quotients
we can write
\[
 \Chi_{xU} = \sum_{zW \in xU/W} \Chi_{zW}
\quad\text{and}\quad
\Chi_{yV} = \sum_{wW\in yV/W}\Chi_{wW}
\]
where both sums are finite.
Thus 
\begin{align*}
\Chi_{xU}\Chi_{yV}
=\sum_{zW\in (xU\cap yV)/W}\Chi_{zW}
\in D_0,
\end{align*}
and it follows that $D_0$ is closed under multiplication.
\end{proof}

Now we want to work with the elements of $\PP$ more as projections in the $C^*$-algebra $D$ rather than as functions on $N$, so we introduce an alternative notation:

\begin{notn}
Put
\[
p_c=\Chi_{c}\for c\in N/U,U\in\UU.
\]
\end{notn}

\begin{lem}
\label{enveloping D}
$D_0$ has a unique $C^*$-norm, so that $D$ is the enveloping $C^*$-algebra of $D_0$.
\end{lem}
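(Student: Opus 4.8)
The plan is to show that $D_0$ is (isometrically $*$-isomorphic to) a commutative $*$-algebra of functions on the Gelfand spectrum of $D$, and that on such an algebra of functions the supremum norm is the only $C^*$-norm. Concretely, by \propref{extend} we have $D \cong C_0(\Nb)$, with $D_0$ corresponding to the dense $*$-subalgebra $\spn\SS$ spanned by the characteristic functions $\Chi_{x\bar U}$ of the compact-open cosets in $\Nb$. The key structural fact is that any $C^*$-norm on a $*$-algebra $A$ dominates the spectral-radius seminorm $r(\cdot)$ (since for self-adjoint elements the norm equals the spectral radius, and in general $\|a\|^2 = \|a^*a\| \ge r(a^*a) = r(a)^2$ — actually for a $C^*$-norm one has $\|a\|^2 = \|a^*a\| = r(a^*a)$, so $\|a\|$ is completely determined by the spectral radius function on $A$). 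Since $D_0$ is a dense $*$-subalgebra of the $C^*$-algebra $D$, the spectrum of any $a\in D_0$ computed in $D_0$ coincides with its spectrum computed in $D$ (this uses that inverses of elements of $D_0$ that exist in $D$ need not lie in $D_0$, but spectral radius is insensitive to this — more carefully, one uses that $D_0$ is a dense subalgebra, hence spectral permanence does \emph{not} hold in general, so this needs the commutativity). Because $D$ is commutative, every element of $D_0$ generates a commutative $C^*$-subalgebra, and the spectral radius of $a \in D_0$ equals $\sup\{|\phi(a)| : \phi \in \widehat{D}\} = \|a\|_D$; hence $\|a\| = \|a^*a\|^{1/2} = r(a^*a)^{1/2} = r(a) = \|a\|_D$ for any $C^*$-norm, giving uniqueness. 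Then $D = \overline{D_0}$ by the previous lemma, so $D$ is the enveloping $C^*$-algebra of $D_0$.

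First I would invoke \propref{extend} to replace $D_0$ by $\spn\SS \subseteq C_0(\Nb)$, reducing to a concrete function algebra. Next I would recall the standard fact that for a $C^*$-norm $\|\cdot\|$ on a (not necessarily complete) $*$-algebra $A$, and any normal $a \in A$, one has $\|a\| = \sup\{|\lambda| : \lambda \in \sigma_B(a)\}$ where $B$ is the completion and $\sigma_B$ is the spectrum in $B$; when $A$ is commutative this applies to every element and $\sigma_B(a) = \{\phi(a) : \phi \in \widehat B\}$. Since $D = \overline{D_0}$ (previous lemma), the completion of $(D_0, \|\cdot\|)$ for any $C^*$-norm is a $C^*$-algebra containing $D_0$ as a dense $*$-subalgebra; I would argue that this completion must be $D$ itself — or rather, that the norm it induces on $D_0$ agrees with $\|\cdot\|_D$ — by observing that the identity map $D_0 \to D_0$ is norm-decreasing in \emph{both} directions on self-adjoint elements via the spectral-radius formula, and then bootstrapping to all elements using the $C^*$-identity. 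Finally, uniqueness of the $C^*$-norm immediately yields that the completion $D$ has the universal property characterizing the enveloping $C^*$-algebra of $D_0$: any $*$-homomorphism from $D_0$ into a $C^*$-algebra is automatically contractive for $\|\cdot\|_D$ (its image carries a $C^*$-norm pulled back to a $C^*$-seminorm on $D_0$, dominated by $\|\cdot\|_D$) and hence extends to $D$.

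I expect the main obstacle to be handling the spectral-radius argument cleanly for a \emph{dense subalgebra} rather than a $C^*$-algebra: spectral permanence can fail, so I must be careful that the relevant ``spectrum'' is the one in the completion, and that commutativity is genuinely used to pass from the self-adjoint case to arbitrary elements. The cleanest route may be to avoid spectra entirely: given a $C^*$-norm $\|\cdot\|$ on $D_0$ with completion $E$, the inclusion $D_0 \hookrightarrow D$ extends to a surjective $*$-homomorphism $E \to D$ (since $D = \overline{D_0}$ and $*$-homomorphisms of $C^*$-algebras are norm-decreasing), so $\|\cdot\|_D \le \|\cdot\|$ on $D_0$; for the reverse inequality, note $E$ is commutative (being the completion of a commutative algebra), so $\widehat E$ is a locally compact Hausdorff space, and the characters of $E$ restrict to characters of $D_0$, which (being multiplicative linear functionals, automatically $\|\cdot\|_D$-bounded since $D_0$ is dense in the commutative $C^*$-algebra $D$) extend to characters of $D$; this gives a continuous map $\widehat D \to \widehat E$ whose dual recovers the norm inequality $\|a\|_E \le \|a\|_D$. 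Assembling these two inequalities gives $\|\cdot\| = \|\cdot\|_D$ on $D_0$, and the enveloping-algebra statement follows formally.
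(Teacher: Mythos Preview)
Your ``cleanest route'' contains two gaps, and both stem from treating commutativity and density as stronger than they are.

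First, the step ``the inclusion $D_0 \hookrightarrow D$ extends to a surjective $*$-homomorphism $E \to D$ (since $*$-homomorphisms of $C^*$-algebras are norm-decreasing)'' is circular. Contractivity of $*$-homomorphisms between $C^*$-algebras would give you $\|a\|_D\le\|a\|_E$ \emph{once you already have} a $*$-homomorphism $E\to D$ extending the identity on $D_0$; but the existence of that extension is precisely the boundedness assertion you are trying to prove. Nothing general forces the identity $(D_0,\|\cdot\|_E)\to (D_0,\|\cdot\|_D)$ to be bounded. Second, the claim that a multiplicative linear functional on $D_0$ is ``automatically $\|\cdot\|_D$-bounded since $D_0$ is dense in the commutative $C^*$-algebra $D$'' is false in general: evaluation at $2$ is a character on the polynomials, which are dense in $C[0,1]$, yet it is unbounded for the sup norm on $[0,1]$.

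What makes the lemma true is a structural fact about $D_0$ that you never invoke: it is \emph{locally finite-dimensional}. Every element of $D_0$ lies in some $\spn\{p_c:c\in F\}$ for a finite set $F\subseteq N/U$ of pairwise disjoint cosets, and this span is $*$-isomorphic to $\C^{|F|}$. On a finite-dimensional $*$-algebra there is at most one $C^*$-norm, so any $C^*$-norm on $D_0$ must agree with $\|\cdot\|_D$ on each such piece, hence everywhere. This is exactly the mechanism the paper uses: it writes $C^*(D_0)$ as the closure of the upward-directed union of the subalgebras $D_U=\clspn\{p_c:c\in N/U\}\cong c_0(N/U)$ and shows that any nonzero ideal of $C^*(D_0)$ meets some $D_U$, hence meets $\spn\{p_c:c\in N/U\}\subseteq D_0$. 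Your character argument can be repaired along the same lines (a character sends each $p_c$ to $0$ or $1$, and the $p_c$ for $c\in N/U$ are mutually orthogonal, so $|\chi(\sum\lambda_c p_c)|\le\max|\lambda_c|$, which equals the norm in \emph{any} $C^*$-completion), but you must actually use the projection structure of $D_0$; commutativity alone does not suffice.
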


\begin{proof}
For any $*$-homomorphism $\pi$ of $D_0$ into a $C^*$-algebra $E$, the norm of $\pi(q)$ is at most $1$ for each $q\in\PP$, so $D_0$ has a universal enveloping $C^*$-algebra $C^*(D_0)$. For the uniqueness of the $C^*$-norm, we must show that if $\pi$ is a homomorphism of $C^*(D_0)$ into a $C^*$-algebra $E$, then $\pi$ is faithful if the restriction $\pi|_{D_0}$ is.
Equivalently, we must show that if $I$ is any nonzero ideal of $C^*(D_0)$, then $I\cap D_0\ne \{0\}$.

The key fact is that $C^*(D_0)$ is the closure of the union of the family of $C^*$-subalgebras
\[
D_U:=\clspn\{p_c:c\in N/U\}
\]
for $U\in\UU$ (where the closure is taken in $C^*(D_0)$),
which is an upward-directed family because $\UU$ is a filter base with finite quotients.
By a standard argument, it follows that there exists $U\in\UU$ such that $I\cap D_U\ne \{0\}$.
Now, the map
\[
f\mapsto \sum_{c\in N/U}f(c)p_c
\]
gives an isomorphism of $c_0(N/U)$ onto $D_U$.
It follows that
\[
I\cap \spn\{p_c:c\in N/U\}\ne \{0\}.
\]
Since
\[
\spn\{p_c:c\in N/U\}\subseteq D_0,
\]
we are done.
\end{proof}

Since $D_0$ is evidently self-adjoint and $G$-invariant,
the ``algebraic crossed product'' 
\[
D_0\rtimes_\alpha G := \spn i_G(G)i_D(D_0)
\]
is a $*$-subalgebra of the $C^*$-crossed product $D\rtimes_\alpha G$.
We emphasize that we use the term ``algebraic crossed product'' purely as shorthand
for $D_0\rtimes_\alpha G \subseteq D\rtimes_\alpha G$ as defined here.  There are certainly
other uses of the term in the literature, and they are generally different from ours;
our approach is inherently $C^*$-algebraic.

We will suppress the maps $i_G$ and $i_D$, thus identifying $G$ and $D_0$ with their images in $M(D\rtimes_\alpha G)$.
For $g\in G$, $U\in\UU$, and $c\in N/U$, covariance becomes
\[
 g p_c g\inv = p_{\beta_g(c)},
\]
where $\beta$ is as in~\eqref{U}.  

We first note that $D_0\rtimes_\alpha G$ is universal for covariant representations.
(A \emph{representation} of a $*$-algebra $B$ on a Hilbert space $X$ is a $*$-homomorphism from $B$ to $\BB(X)$.)

\begin{defn}
If $\pi$ and $u$ are representations of $D_0$ and $G$ on a Hilbert space $X$, we say $(\pi,u)$ is a \emph{covariant representation of $(D_0,G)$} if
\[
u_g\pi(f)u_g^*=\pi(\alpha_g(f))\righttext{for}g\in G,f\in D_0.
\]
\end{defn}

\begin{cor}\label{algebraic}
For every covariant representation $(\pi,u)$ of $(D_0,G)$ on a Hilbert space $X$, there is a unique representation $\Pi$ of $D_0\rtimes_\alpha G$ on $X$ such that
\begin{equation}
\label{integrated}
\Pi(gf)=u_g\pi(f) {\righttext{for}g\in G,f\in D_0}.
\end{equation}
\end{cor}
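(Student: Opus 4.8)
The plan is to construct $\Pi$ directly from the covariant pair and then check it is a well-defined $*$-representation. First I would \emph{define} $\Pi$ on the spanning set: given a finite sum $\sum_j g_j f_j$ with $g_j\in G$ and $f_j\in D_0$, set $\Pi\bigl(\sum_j g_j f_j\bigr)=\sum_j u_{g_j}\pi(f_j)$. The uniqueness claim is then immediate, since $D_0\rtimes_\alpha G=\spn i_G(G)i_D(D_0)$ and \eqref{integrated} forces the value of $\Pi$ on each generator $gf$; any two representations satisfying \eqref{integrated} agree on a spanning set, hence everywhere.

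The substance is well-definedness, i.e.\ showing that if $\sum_j g_j f_j=0$ in $D_0\rtimes_\alpha G\subseteq D\rtimes_\alpha G$, then $\sum_j u_{g_j}\pi(f_j)=0$ in $\BB(X)$. The standard device is to pass to a ``big enough'' representation on the crossed-product side. Concretely: the covariant pair $(\pi,u)$ need not a priori be covariant for $(D,G)$, but every $f\in D_0$ is a finite linear combination of the projections $p_c$, $c\in N/U$, and these generate $D$; moreover $\pi$ restricted to the $*$-subalgebra $D_0$ extends (by \lemref{enveloping D}, $D=C^*(D_0)$, so $D_0$ has a unique $C^*$-norm) to a $*$-homomorphism $\bar\pi\colon D\to\BB(X)$ --- after compressing to the essential subspace $\bar{\pi(D_0)X}$, on which $u$ still acts because the covariance relation $u_g\pi(f)u_g^*=\pi(\alpha_g(f))$ shows this subspace is $u$-invariant. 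The pair $(\bar\pi,u)$ is then a genuine covariant representation of $(D,G)$, so by the universal property of the full crossed product $D\rtimes_\alpha G$ it integrates to a representation $\bar\pi\times u$ of $D\rtimes_\alpha G$ on (the essential subspace of) $X$ satisfying $(\bar\pi\times u)(gf)=u_g\bar\pi(f)=u_g\pi(f)$ for $f\in D_0$. Restricting $\bar\pi\times u$ to the $*$-subalgebra $D_0\rtimes_\alpha G$ therefore produces exactly the map we want; in particular $\Pi$ is the restriction of an honest representation, so it is well-defined and multiplicative and $*$-preserving.

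The one point needing care --- and the main obstacle --- is the passage from $\pi|_{D_0}$ to a $*$-homomorphism on all of $D$ together with preservation of covariance. Extending $\pi$ off $D_0$ is exactly where \lemref{enveloping D} is used: $D$ is the enveloping $C^*$-algebra of $D_0$, so the $*$-homomorphism $\pi\colon D_0\to\BB(X)$ factors through $D=C^*(D_0)$. Once $\bar\pi$ is in hand, covariance for $(D,G)$ follows by continuity from covariance for $(D_0,G)$, since $D_0$ is dense in $D$ and $\alpha_g$ is isometric. (The compression to $\bar{\pi(D_0)X}$ is only a bookkeeping nicety to make $\bar\pi$ nondegenerate; alternatively one may note that $\PP$ contains an approximate unit for $D_0$ of projections increasing to $1$, so $\bar\pi(D_0)X$ is already dense. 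Either way $u$ preserves this subspace by the covariance relation.) With that in place everything else is the routine verification that integrated forms of covariant representations restrict well to the algebraic crossed product.
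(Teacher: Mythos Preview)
Your argument is correct and follows essentially the same route as the paper: extend $\pi$ to a representation $\bar\pi$ of $D$ using \lemref{enveloping D}, observe that covariance passes to the closure by density, integrate the covariant pair $(\bar\pi,u)$ to a representation of $D\rtimes_\alpha G$, and restrict to $D_0\rtimes_\alpha G$. The only difference is cosmetic --- you frame the construction as a well-definedness check and add the nondegeneracy bookkeeping, while the paper simply writes down the extension and restriction without further comment.
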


\begin{proof}
Uniqueness is clear, since $D_0\rtimes_\alpha G$ is spanned by the products $gf$ for $g\in G,f\in D_0$.
Given $(\pi,u)$, by \lemref{enveloping D} $\pi$ extends uniquely to a representation $\wilde\pi$ of $D$ on $X$. By density and continuity, the pair $(\wilde\pi,u)$ is a covariant representation of $(D,G)$, so there is a unique representation $\wilde\Pi$ of the $C^*$-crossed product $D\rtimes_\alpha G$ on $X$ such that
\[
\wilde\Pi(gf)=u_g\wilde\pi(f)\righttext{for}g\in G,f\in D.
\]
Then the restriction $\Pi:=\wilde\Pi|_{D_0\rtimes_\alpha G}$ is a representation of $D_0\rtimes_\alpha G$ on $X$ satisfying \eqref{integrated}.
\end{proof}

\begin{cor}
\label{enveloping crossed product}
$D\rtimes_\alpha G$ is the enveloping $C^*$-algebra of $D_0\rtimes_\alpha G$.
\end{cor}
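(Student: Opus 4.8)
The plan is to verify the universal property of the enveloping $C^*$-algebra by hand. Write $B:=D_0\rtimes_\alpha G$ and let $\iota\colon B\hookrightarrow D\rtimes_\alpha G$ be the inclusion. Two things suffice: (a) $\iota$ has dense range, and (b) every $*$-representation of $B$ on a Hilbert space extends, through $\iota$, to a representation of $D\rtimes_\alpha G$. Statement (a) is immediate, since $D_0$ is dense in $D$ and $D\rtimes_\alpha G=\clspn\,i_G(G)i_D(D)$. Given (b), a faithful representation of $D\rtimes_\alpha G$ restricts to a representation of $B$, so the universal $C^*$-seminorm of $B$ is finite and equals the norm that $B$ inherits from $D\rtimes_\alpha G$; completing and invoking the density in (a) then identifies $D\rtimes_\alpha G$ with $C^*(B)$.

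To prove (b), let $\Pi$ be a $*$-representation of $B$ on $X$; after restricting to the essential subspace we may assume $\Pi$ is nondegenerate, and we set $\pi:=\Pi|_{D_0}$. The crucial preliminary observation is that, since $gf=\alpha_g(f)g$ in $M(D\rtimes_\alpha G)$ for $g\in G$ and $f\in D_0$, every generator satisfies $gf=q\,gf$ as soon as $q\in D_0$ is a finite sum $\sum_{c\in F}p_c$ whose cosets dominate the support of $\alpha_g(f)$. Hence $\Pi(gf)=\pi(q)\Pi(gf)$, so the range of $\Pi(gf)$ lies in $\clspn\,\pi(D_0)X$; as such elements $gf$ span $B$, nondegeneracy of $\Pi$ forces $\pi$ to be nondegenerate too, and by \lemref{enveloping D} $\pi$ extends to a representation $\bar\pi$ of $D$ on $X$. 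The projections $q$ just described form an approximate identity of $D_0$ (they absorb any fixed element once $F$ is large enough), so $\pi(q)\to1$ strongly; and $\alpha_h$ maps this family of projections onto itself for each $h\in G$.

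Now I construct the unitary part $u$. For such a $q$ and any $g\in G$, covariance gives $(gq)^*(gq)=q$ and $(gq)(gq)^*=\alpha_g(q)$, so $\Pi(gq)$ is a partial isometry with initial space $\ran\pi(q)$; moreover $\Pi(gq')\pi(q)=\Pi(gq)$ whenever $q\le q'$, so these partial isometries are coherent as $q$ increases. Because $\pi(q)\to1$ strongly and the $\Pi(gq)$ are contractions, the net $\Pi(gq)$ converges strongly to an operator $u_g$ with $\|u_g\|\le1$ and $u_g\pi(f)=\Pi(gf)$ for all $f\in D_0$. Strong-limit computations, reduced throughout to the dense subspace $\spn\,\pi(D_0)X$ and using repeatedly that $\alpha_h$ permutes the family $\{q\}$, give $u_g\Pi(hf)=\Pi(ghf)$ for $h\in G$, $f\in D_0$; hence $u_e=1$ and $u$ is multiplicative, and being a contractive homomorphism with $u_gu_{g^{-1}}=u_e=1$ it takes unitary values. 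A similar computation yields the covariance identity $u_g\pi(f)u_g^*=\pi(\alpha_g(f))$, so by continuity $(\bar\pi,u)$ is a covariant representation of the $C^*$-dynamical system $(D,G,\alpha)$. Integrating it produces a representation $\wilde\Pi$ of $D\rtimes_\alpha G$ with $\wilde\Pi(gf)=u_g\bar\pi(f)=\Pi(gf)$ for $g\in G$, $f\in D_0$; since such elements span $B$, we get $\wilde\Pi\circ\iota=\Pi$, which is (b).

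The step I expect to be the main obstacle is the construction of $u$: the real content is that the net $\Pi(gq)$ converges strongly — which is precisely the coherence of the partial isometries $\Pi(gq)$ together with the nondegeneracy of $\pi$ — and that the resulting $g\mapsto u_g$ is a homomorphism into the unitaries. Everything else (the density in (a), the seminorm comparison, the extension of $\pi$ to $D$ by \lemref{enveloping D}, and the integration of the covariant pair) is routine, and it all rests on the one small point that the ``$ax+b$'' relation $gf=\alpha_g(f)g$ lets a local unit of $D_0$ be absorbed into any generator $gf$.
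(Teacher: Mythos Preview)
Your argument is correct. You establish the universal property by hand: given a (nondegenerate) representation $\Pi$ of $D_0\rtimes_\alpha G$, you recover a covariant pair $(\bar\pi,u)$ for $(D,G,\alpha)$ by restricting to $D_0$, extending to $D$ via \lemref{enveloping D}, and building $u_g$ as the strong limit of the coherent net of partial isometries $\Pi(gq)$ over the approximate identity $\{q\}$ of projections in $D_0$. The verifications (nondegeneracy of $\pi$, coherence and strong convergence of $\Pi(gq)$, multiplicativity and unitarity of $u$, covariance, and agreement of the integrated form with $\Pi$) all go through as you sketch; the only point worth making explicit is that your family $\{q\}$ should be taken as finite sums of $p_c$'s over pairwise disjoint cosets of a \emph{single} $U\in\UU$, so that each $q$ is genuinely a projection and the family is upward-directed via common refinements.

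The paper takes a much shorter route: it simply invokes \lemref{enveloping D} together with \cite[Lemma~2.3]{eq:full}, a general result which says (roughly) that if $A_0$ is a dense $G$-invariant $*$-subalgebra of $A$ with $A=C^*(A_0)$, then $A\rtimes_\alpha G$ is the enveloping $C^*$-algebra of the algebraic crossed product $A_0\rtimes_\alpha G$. Your proof is essentially an \emph{ad hoc} proof of that lemma in the present setting, exploiting the concrete approximate identity of projections available in $D_0$. What you gain is self-containment and transparency --- the reader sees exactly how the unitary part is recovered from the algebraic data --- at the cost of length. What the paper's approach buys is brevity and a clear separation of concerns: the passage from $D_0$ to $D$ is isolated in \lemref{enveloping D}, and the passage to the crossed product is delegated to a known general fact.
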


\begin{proof}
This follows from
\lemref{enveloping D} and \cite[Lemma~2.3]{eq:full}.
\end{proof}

\begin{hyp}
\label{regular}
For the remainder of this section we assume that the action $\alpha$ of $G$ on $D$ is ``regular'' in the sense that the regular representation of $D\rtimes_\alpha G$ onto $D\rtimes_{\alpha,r} G$ is an isomorphism.
\end{hyp}

Note that Hypothesis~\ref{regular} is satisfied in particular whenever $G$ is amenable,
which is the case in all our examples.

As a consequence of \thmref{simple} and \hypref{regular}, the full crossed product $D\rtimes_\alpha G$ is simple and purely infinite.

\begin{cor}
The $*$-algebra $D_0\rtimes_\alpha G$ has a unique $C^*$-norm.
\end{cor}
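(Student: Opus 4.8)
The plan is to deduce this from the machinery already in place. Recall that \corref{enveloping crossed product} shows $D\rtimes_\alpha G$ is the enveloping $C^*$-algebra of $D_0\rtimes_\alpha G$; combined with \hypref{regular}, this means $D\rtimes_\alpha G = D\rtimes_{\alpha,r} G$ is the enveloping $C^*$-algebra of $D_0\rtimes_\alpha G$. The content of the corollary is that there are no \emph{other} $C^*$-norms on $D_0\rtimes_\alpha G$ besides the one inherited from $D\rtimes_\alpha G$. Equivalently — exactly as in the proof of \lemref{enveloping D} — I need to show that for any $*$-homomorphism $\pi$ of the $*$-algebra $D_0\rtimes_\alpha G$ into a $C^*$-algebra $E$, if $\pi$ is injective on $D_0\rtimes_\alpha G$ then it is isometric, i.e.\ it extends to a faithful representation of the enveloping algebra $D\rtimes_\alpha G$.

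First I would reduce to the case of representations on Hilbert space: by the GNS construction it suffices to check the claim for a $*$-representation $\pi$ of $D_0\rtimes_\alpha G$ on a Hilbert space $X$. Any such $\pi$ restricts to a representation of $D_0$ and of $G$ (via the canonical copies inside $M(D\rtimes_\alpha G)$), and the covariance relation $g p_c g\inv = p_{\beta_g(c)}$ passing through $\pi$ shows that $(\pi|_{D_0}, \pi|_G)$ is a covariant representation of $(D_0,G)$ in the sense of the Definition preceding \corref{algebraic}. By \lemref{enveloping D}, $\pi|_{D_0}$ extends to a representation of $D$, and by continuity the pair becomes a covariant representation of $(D,G)$, hence integrates to a representation $\wilde\Pi$ of the full crossed product $D\rtimes_\alpha G$; its restriction to $D_0\rtimes_\alpha G$ agrees with $\pi$ by density on the spanning elements $g p_c$. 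So far this is just \corref{algebraic} read backwards, and it shows $\|\pi(b)\| \le \|b\|_{D\rtimes_\alpha G}$ automatically for all $b\in D_0\rtimes_\alpha G$, which is the ``universal'' half.

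The substance is the reverse inequality: if $\pi$ is faithful on $D_0\rtimes_\alpha G$, then $\wilde\Pi$ is faithful on $D\rtimes_\alpha G$. Here I invoke simplicity: by \thmref{simple} together with \hypref{regular}, $D\rtimes_\alpha G = D\rtimes_{\alpha,r}G$ is simple, so $\wilde\Pi$ is automatically injective as long as it is nonzero — and it is nonzero because $\pi$ is, by hypothesis, injective on the nonzero algebra $D_0\rtimes_\alpha G$. Being an injective $*$-homomorphism between $C^*$-algebras, $\wilde\Pi$ is isometric, so $\|\pi(b)\| = \|\wilde\Pi(b)\| = \|b\|_{D\rtimes_\alpha G}$ for all $b\in D_0\rtimes_\alpha G$. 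Thus every $C^*$-seminorm on $D_0\rtimes_\alpha G$ coming from a faithful representation equals the restriction of the norm of $D\rtimes_\alpha G$; since an arbitrary $C^*$-norm on $D_0\rtimes_\alpha G$ is dominated by the enveloping norm and, on passing to its Hausdorff completion, gives a faithful representation of $D_0\rtimes_\alpha G$ (because it is a norm, not merely a seminorm), it too coincides with the enveloping norm. This gives uniqueness. The only real point to be careful about is confirming that a $*$-homomorphism of $D_0\rtimes_\alpha G$ genuinely restricts to honest (possibly degenerate) representations of $D_0$ and of $G$ with the covariance relation intact — a routine check, since $D_0$ and the unitaries $g$ live in $M(D\rtimes_\alpha G)$ and multiply the spanning set $\{g p_c\}$ back into itself — so I expect no serious obstacle; the corollary is essentially a formal consequence of simplicity plus \corref{enveloping crossed product}.
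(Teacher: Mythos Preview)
Your proposal is correct and follows the same line as the paper's one-sentence proof: the result is a formal consequence of \corref{enveloping crossed product} together with simplicity of $D\rtimes_\alpha G$ (via \thmref{simple} and \hypref{regular}). Your detour through covariant representations to extend $\pi$ to $D\rtimes_\alpha G$ is unnecessary---that extension is exactly what \corref{enveloping crossed product} already provides---so you can go directly from ``let $\|\cdot\|'$ be any $C^*$-norm on $D_0\rtimes_\alpha G$'' to ``the identity extends to a nonzero, hence (by simplicity) isometric, $*$-homomorphism from $D\rtimes_\alpha G$ onto the $\|\cdot\|'$-completion,'' and this also sidesteps the delicate issue you flag about restricting $\pi$ to $G$ (which does not literally sit inside $D_0\rtimes_\alpha G$).
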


\begin{proof}
This follows from \corref{enveloping crossed product} and simplicity of $D\rtimes_\alpha G$.
\end{proof}

\begin{notn}
For notational simplicity, let
\begin{itemize}
\item $A_0=D_0\rtimes_\alpha G$ (the algebraic crossed product);
\item $A=D\rtimes_\alpha G$ (the $C^*$-crossed product).
\end{itemize}
Also, let
\[
p=p_M.
\]
\end{notn}

The following lemma shows that $p$ is ``algebraically full'' in $A_0$:

\begin{lem}
$A_0=\spn A_0pA_0$.
\end{lem}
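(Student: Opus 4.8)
The plan is to show that the ideal of $A_0$ generated by $p$ is all of $A_0$, using the fact that $\UU$ is a separating filter base with finite quotients so that every projection $p_c\in\PP$ can be ``reached'' from $p=p_M$ by conjugating with group elements and summing. Since $A_0=\spn i_G(G)i_D(D_0)=\spn\{g\,p_c : g\in G,\ c\in N/U,\ U\in\UU\}$, and since $\spn A_0pA_0$ is a two-sided ideal of $A_0$ (it is clearly a subspace closed under left and right multiplication by $A_0$), it suffices to prove that $p_c\in\spn A_0pA_0$ for each $c\in N/U$, $U\in\UU$; then $g\,p_c=g\cdot p_c\in A_0\cdot\spn A_0pA_0\subseteq\spn A_0pA_0$ and we are done.

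First I would handle the coset $c=M$ itself, i.e.\ $p_M=p$, which is trivially in $\spn A_0pA_0$ since $p=p\cdot p\cdot p\in A_0pA_0$ (recall $p$ is a projection). Next I would handle an arbitrary coset $xM$ with $x\in N$: by covariance, $i_G(x)\,p\,i_G(x)\inv=p_{\beta_x(M)}=p_{xM}$ (here $\beta_x$ is left translation by $x\in N\subseteq G$, so $\beta_x(M)=xM$), hence $p_{xM}=x\,p\,x\inv\in A_0pA_0$. Now consider a general coset $xU$ with $U=aMa\inv\in\UU$ for some $a\in H$. Using covariance with $g=a\inv\in G$ we get $a\inv p\,a = p_{\beta_{a\inv}(M)} = p_{a\inv M a}$; more directly, for any $U\in\UU$ write $U=aMa\inv$ and observe that $a\,p\,a\inv=p_{\beta_a(M)}=p_{aMa\inv}=p_U$, so $p_U\in A_0pA_0$. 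Combining the two steps, for $x\in N$ and $U=aMa\inv\in\UU$ we have
\[
p_{xU}=i_G(x)\,p_U\,i_G(x)\inv=i_G(x)i_G(a)\,p\,i_G(a)\inv i_G(x)\inv=(xa)\,p\,(xa)\inv\in A_0pA_0,
\]
since $xa\in G$. As $D_0=\spn\PP=\spn\{p_{xU}:x\in N,U\in\UU\}$, we conclude $D_0\subseteq\spn A_0pA_0$, and therefore (by the ideal property and the spanning description of $A_0$ above) $A_0=\spn A_0pA_0$.

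The main point to get right is bookkeeping rather than any genuine obstacle: one must be careful that $p_U$ for $U\in\UU$ really does lie in the image of $p$ under conjugation by a single group element, which is exactly the content of covariance $g\,p_c\,g\inv=p_{\beta_g(c)}$ together with the defining fact that the elements of $\UU$ are the $H$-conjugates $aMa\inv$ of $M$; the identity $\beta_a(M)=aMa\inv$ for $a\in H$ is immediate from the definition of $\beta$ in~\eqref{U}. No appeal to finite quotients, separation, or effectiveness is needed for this particular lemma — those hypotheses enter later — so the proof is short once the covariance rewriting is set up.
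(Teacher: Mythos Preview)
Your proof is correct and takes a cleaner route than the paper's. The paper argues by refinement: given $p_c$ with $c\in N/U$, it chooses $V\in\UU$ with $V\subseteq U\cap M$, writes $p_c$ as the finite sum $\sum p_d$ over $V$-cosets $d\subseteq c$, and then uses that subprojections $q\le p$ satisfy $q=qpq\in A_0pA_0$; this route invokes the finite-quotients hypothesis~\eqref{finite}. You instead go straight to covariance: since every $U\in\UU$ has the form $aMa^{-1}$, every generator $p_{xU}\in\PP$ is a single $G$-conjugate $(xa)\,p\,(xa)^{-1}$ of $p$, hence lies in $A_0pA_0$ with no decomposition needed. Your argument is shorter and, as you correctly note, does not require finite quotients for this particular lemma.

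One small point worth tightening: when you write $(xa)\,p\,(xa)^{-1}\in A_0pA_0$ ``since $xa\in G$'', remember that $i_G(g)$ is only a multiplier of $A$, not an element of $A_0$. The fix is immediate: either observe that left and right multiplication by elements of $G$ preserve $A_0$ (so $\spn A_0pA_0$ is invariant under $G$-conjugation and already contains $p=p\cdot p\cdot p$), or insert $p=p^3$ to obtain $(xa)\,p\,(xa)^{-1}=\bigl[(xa)p\bigr]\, p\,\bigl[p(xa)^{-1}\bigr]$ with $(xa)p\in i_G(G)i_D(D_0)\subseteq A_0$ and $p(xa)^{-1}=(xa)^{-1}p_{xU}\in A_0$.
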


\begin{proof}
Since $gA_0=A_0$ for all $g\in G$, it suffices to show that for every $U\in\UU$ and $c\in N/U$ we have
\[
p_c\in \spn A_0pA_0.
\]
Choose $V\in\UU$ such that $V\subseteq U\cap M$. Then
\begin{align*}
p_c
&=\sum\{p_d:d\in N/V:d\subseteq c\}
\\&\in \spn\{q\in\PP:q\le p\}
\\&\subseteq \spn A_0pA_0,
\end{align*}
because $qp=q$ for all $q\in\PP$ with $q\le p$.
\end{proof}

Now we see that $A_0p$ is an $A_0-pA_0p$ imprimitivity bimodule in the sense of Fell and Doran \cite[Definition~XI.6.2]{fd2}.
Since $p\in A_0$, the left inner product ${}_L\<\cdot,\cdot\>$ on $A_0p$ is \emph{positive} in the sense that for all $b\in A_0$ we have
\[
{}_L\<bp,bp\>=(bp)(bp)^*.
\]
We need to know that the right inner product is also positive:

\begin{lem}
\label{positive}
For all $b\in A_0$ there exist $c_1,\dots,c_n\in pA_0p$ such that
\[
\<bp,bp\>_R=pb^*bp=\sum_{i=1}^nc_i^*c_i.
\]
\end{lem}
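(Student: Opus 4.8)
The plan is to reduce the claim to a statement about a single group element $g\in G$ and a single projection $q = p_c \le p$, since $A_0$ is spanned by products of the form $g p_c$. More precisely, since $A_0 = \spn i_G(G)i_D(D_0)$ and $D_0 = \spn\PP$, it suffices to prove the identity $pb^*bp = \sum_i c_i^*c_i$ with $c_i \in pA_0p$ for $b$ of the form $b = g p_c$ with $g\in G$, $c\in N/U$, $U\in\UU$; the general case follows by bilinearity after expanding $b = \sum_j b_j$, because $pb^*bp = \sum_{j,k} p b_j^* b_k p$ and one can use the polarization identity to write each cross term $pb_j^*b_kp$ as a linear combination of terms $p(b_j+i^\ell b_k)^*(b_j+i^\ell b_k)p$ — wait, that reintroduces sums of the original form, so instead I would argue directly: first establish the single-generator case, then observe that the set of $b\in A_0$ for which the conclusion holds is closed under addition, since if $pb^*bp = \sum c_i^*c_i$ and $pb'^*b'p = \sum d_j^*d_j$ then $p(b+b')^*(b+b')p$ equals $\sum c_i^*c_i + \sum d_j^*d_j + pb^*b'p + pb'^*bp$, and the two cross terms are adjoints of each other, so the real part is controlled — this is exactly the standard fact that in any $C^*$-algebra the cone generated by $\{x^*x\}$ absorbs such perturbations. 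Cleanest is: reduce to showing $pb^*bp$ lies in the positive cone of $pA_0pA_0p \subseteq pAp$, i.e. is a finite sum $\sum c_i^*c_i$ with $c_i \in pA_0p$.

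For the single-generator computation, take $b = g p_c$ with $g = xa$, $x\in N$, $a\in H$, and $c\in N/U$. Then $pb^*bp = p\, p_c\, g^* g\, p_c\, p = p\, p_c\, p_c\, p = p p_c p$ since $g$ is unitary and $p_c$ is a projection — so in fact $pb^*bp = p p_c p$, which does not even depend on $g$. Now $p = p_M$ and $p_c$ are commuting projections in $D$ (both lie in the commutative algebra $D_0$), so $pp_cp = p p_c = p_M p_c = p_{M\cap c}$, which is again an element of $\PP$ sitting below $p$, hence lies in $pA_0p$ and is itself of the form $c_1^*c_1$ with $c_1 = p_{M\cap c}\in pA_0p$ (a projection is its own square and self-adjoint). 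Here $M\cap c$ denotes the coset $c$ intersected with $M$, which is either empty (giving $0$) or a coset of $M\cap (\text{the subgroup }U)$ refining both — in either case $\Chi_{M\cap c}$ decomposes as a finite sum of elements of $\PP$ below $p$ by the finite-quotients hypothesis, just as in the proof of the previous lemma.

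Assembling: writing a general $b\in A_0$ as $b = \sum_{j=1}^m g_j f_j$ with $g_j\in G$ and $f_j\in D_0$, expand $f_j = \sum_k \lambda_{jk} q_{jk}$ with $q_{jk}\in\PP$; then $pb^*bp$ is a finite linear combination of terms $p\,q_{jk}\,g_j^* g_l\, q_{lm}\,p$. The "diagonal-looking" terms reduce as above, but the genuinely off-diagonal terms $pq\,g^* g'\,q'p$ with $g^{-1}g'\notin N$ need more care, and this is where I expect the real work to be. The key point is that $g^{-1}g' = y b'$ with $b'\in H\setminus\{e\}$ conjugates the coset data, so $q g^* g' q' = q\, \alpha_{?}(\cdots)$ lands back among scalar multiples of elements of $\PP$ — concretely $g^* g' q' g'^{-1} g = p_{\beta_{g^{-1}g'}^{-1}(c')}$ by covariance, so $p q g^*g' q' p = p\,q\,p_{\beta_{(g^{-1}g')^{-1}}(c')}\,(g^*g')\,p$, which is a product of a $D_0$-element with a group element sandwiched by $p$'s; this is an element of $pA_0p$, but it need not be self-adjoint or positive. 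To handle it I would instead not expand into cross terms at all, and rely purely on the additive-closure reduction in the first paragraph together with the single-generator case: since every $b$ is a \emph{sum} of single generators and the set of good $b$ is closed under sums (because the positive cone $\{\sum c_i^*c_i : c_i\in pA_0p\}$ of $pAp$ is closed under addition, and $p(b+b')^*(b+b')p - pb^*bp - pb'^*b'p = pb^*b'p + (pb^*b'p)^*$ is a self-adjoint element which, by the parallelogram-type identity $\sum_{\ell=0}^3 i^\ell (b + i^{-\ell} b')^*(b+i^{-\ell}b') = 4\, b^* b'$ applied after compressing by $p$, is itself a difference of such positive sums), a short induction on $m$ finishes the proof; the main obstacle is precisely the bookkeeping in this last inductive step, ensuring all intermediate terms retain the form $\sum c_i^* c_i$ with $c_i\in pA_0p$ rather than merely in $pAp$.
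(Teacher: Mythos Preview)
Your base case is fine: for $b = g\,p_c$ you correctly get $pb^*bp = p\,p_c\,p = \Chi_{M\cap c}$, a projection in $pA_0p$, hence equal to $c_1^*c_1$ with $c_1\in pA_0p$.

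The gap is in your passage from single generators to sums. You want to show that the set of ``good'' $b$ is closed under addition, and for this you invoke the polarization identity
\[
4\,b^*b' \;=\; \sum_{\ell=0}^{3} i^{\ell}\,(b+i^{-\ell}b')^*(b+i^{-\ell}b')
\]
to control the cross term $pb^*b'p + pb'^*bp$. But each summand on the right is precisely of the form $p(b+\lambda b')^*(b+\lambda b')p$ with $b+\lambda b'$ a \emph{sum} of two generators --- exactly the objects whose positivity you are trying to establish. No induction on the number of summands gets off the ground, because polarization never reduces that number. Worse, even granting that each $p(b+i^{-\ell}b')^*(b+i^{-\ell}b')p$ were a sum of squares in $pA_0p$, the polarization expresses the cross term only as a \emph{complex linear combination} (or, after taking real parts, a \emph{difference}) of such sums; adding this to $pb^*bp + pb'^*b'p$ yields at best a difference of elements of the cone $\{\sum c_i^*c_i\}$, not an element of the cone itself. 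You yourself flag this as ``the main obstacle,'' but you do not resolve it, and it cannot be resolved along these lines.

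The argument the paper has in mind (deferred to \cite[Proof of Theorem~5.13]{hecke1}) avoids cross terms altogether by an orthogonality trick rooted in the Hecke/coset structure. Roughly: after refining so that $bp = \sum_j \lambda_j\, g_j\, q_j$ with all $q_j = \Chi_{m_jV}$ for a single $V\in\UU$, $V\subseteq M$, one rewrites $bp$ on the \emph{left} as $bp = \sum_c \Chi_c\, y_c$ for a further common refinement of the cosets $\beta_{g_j}(m_jV)$, with the $\Chi_c$ pairwise orthogonal; then $(bp)^*(bp) = \sum_c y_c^*\,\Chi_c\, y_c$ has no cross terms at all, and one is reduced to handling each $y_c^*\Chi_c y_c$ separately --- a computation in which the group element is still present but the orthogonality has already done the work that your polarization could not. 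The point is that the coset combinatorics, not generic $*$-algebra identities, are what force the required positivity.
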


\begin{proof}
The proof is almost identical to an argument in \cite[Proof of Theorem~5.13]{hecke1}, so we will omit it.
\end{proof}

\begin{cor}
\label{enveloping corner}
$pA_0p$ has a unique $C^*$-norm, and
$pAp$ is its enveloping $C^*$-algebra.
\end{cor}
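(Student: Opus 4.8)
The plan is to deduce both assertions from the imprimitivity bimodule $A_0p$ together with the structural facts just established: that $pb^*bp$ is a sum of the form $\sum c_i^*c_i$ with $c_i\in pA_0p$ (\lemref{positive}), that $A_0=\spn A_0pA_0$, and that $A$ is simple with $A=D\rtimes_{\alpha,r}G$. First I would observe that since $p\in A_0$, the corner $pA_0p$ is a $*$-subalgebra of the corner $C^*$-algebra $pAp\subseteq A$, and $pAp=\bar{pA_0p}$ by density of $A_0$ in $A$ and continuity of the maps $b\mapsto pbp$. For the existence of an enveloping $C^*$-algebra of $pA_0p$, note that every $*$-homomorphism $\varphi$ of $pA_0p$ into a $C^*$-algebra is automatically norm-decreasing: using \lemref{positive}, $\|\varphi(pb^*bp)\|=\|\sum\varphi(c_i)^*\varphi(c_i)\|$, and a standard estimate bounds this in terms of $\|pb^*bp\|$ once one knows the $c_i$ can be taken with $\sum\|c_i\|^2$ controlled by $\|pb^*bp\|$ — which is exactly the content one extracts from the Hecke-algebra argument cited there (the $c_i$ arise as $p g p$-type terms with coefficients from an expansion of $b$). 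Hence $C^*(pA_0p)$ exists.

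Next I would establish that $C^*(pA_0p)\cong pAp$, i.e. that $pAp$ is the enveloping $C^*$-algebra of $pA_0p$. The universal property of $C^*(pA_0p)$ gives a surjection $C^*(pA_0p)\twoheadrightarrow pAp$ extending the inclusion; I must show it is injective, equivalently that the faithful (e.g. reduced-regular) representation of $pAp$ on a Hilbert space restricts to a representation of $pA_0p$ that extends to \emph{all} of $C^*(pA_0p)$ and remains faithful. Here is where the imprimitivity bimodule does the work: $A_0p$ implements a Rieffel correspondence between ideals of $A_0$ (equivalently, by \corref{enveloping crossed product} and uniqueness of the $C^*$-norm on $A_0$, ideals of $A$) and ideals of $pA_0p$. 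Since $A$ is simple, $A_0$ has no nontrivial closed ideals meeting it in the obvious sense, and one concludes $pA_0p$ has a \emph{unique} $C^*$-norm: any $C^*$-seminorm on $pA_0p$, via the bimodule, induces one on $A_0$, which by the Corollary preceding this must be the unique one, and transporting back shows the seminorm on $pA_0p$ is unique. Concretely: if $I$ is the kernel of a $*$-representation of $pA_0p$ generating a $C^*$-norm, then $A_0pI pA_0$ (suitably interpreted via the bimodule) is an ideal of $A_0$ whose closure is a closed ideal of $A$; simplicity forces it to be $0$ or $A$, and the full-corner property $A_0=\spn A_0pA_0$ rules out the proper nonzero case, so $I=0$.

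The main obstacle I anticipate is the bookkeeping needed to make the Rieffel-correspondence argument work at the \emph{purely algebraic} level — the imprimitivity bimodule $A_0p$ is an algebraic (pre-)imprimitivity bimodule, not a $C^*$-one, so I cannot immediately invoke the $C^*$-algebraic Rieffel correspondence; I must either complete first (using \lemref{positive} to get positivity of both inner products, hence that the completion is a genuine $C^*$-imprimitivity bimodule between $\bar{A_0}=A$ and $\bar{pA_0p}=pAp$) and then transfer the conclusion back down, or argue directly with the maps $b\mapsto bp$ and $c\mapsto c^*c$ that a nonzero $C^*$-seminorm on $pA_0p$ cannot vanish anywhere without forcing a proper nonzero ideal in $A$. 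The cleanest route is the former: complete $A_0p$ to the $C^*$-imprimitivity bimodule $Ap$ between $A$ and $pAp$, note $A$ simple $\Rightarrow pAp$ simple, and then a $C^*$-seminorm on $pA_0p$ whose kernel is nonzero would, after completion, give a nonzero proper ideal of $pAp$ — contradiction; while a nonzero $C^*$-seminorm that is not a norm but has trivial kernel still defines, by the uniqueness of the $C^*$-norm on the dense subalgebra, the same completion $pAp$. Either way the two assertions follow at once: uniqueness of the $C^*$-norm on $pA_0p$, and $pAp=\overline{pA_0p}=C^*(pA_0p)$.
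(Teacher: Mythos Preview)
Your overall strategy matches the paper's exactly: use that $A_0p$ is an algebraic imprimitivity bimodule with positive right inner product (\lemref{positive}), that $A$ is simple, pass to the completed bimodule $Ap$ to get $pAp$ simple, and transfer the ``enveloping $C^*$-algebra'' property across the Morita equivalence. The paper dispatches the key step --- that $pAp$ is the enveloping $C^*$-algebra of $pA_0p$, not merely \emph{some} $C^*$-completion --- by citing \cite[Proposition~5.5(iii)]{hecke1}, and then deduces uniqueness of the $C^*$-norm from simplicity of $pAp$.

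Your attempt to argue that key step directly has a genuine gap. The claim that the $c_i$ in \lemref{positive} can be taken with $\sum\|c_i\|^2$ controlled by $\|pb^*bp\|$ is not what the lemma provides, and even if it were, bounding $\|\varphi(c_i)\|$ is precisely the statement you are trying to prove --- the estimate is circular. Likewise, your final paragraph only handles $C^*$-seminorms already \emph{dominated by} the $pAp$-norm: a seminorm not so dominated does not ``after completion give a nonzero proper ideal of $pAp$'', because there is no map from $pAp$ to that completion to take the kernel of. What is actually needed --- and what the cited proposition supplies --- is Rieffel induction: given any $*$-representation $\pi$ of $pA_0p$ on $X$, form the induced representation of $A_0$ on the completion of $A_0p\otimes_{pA_0p}X$ (positivity of the right inner product makes this a genuine pre-Hilbert space); this extends to $A$ by \corref{enveloping crossed product}, and compressing by $p$ recovers $\pi$, proving $\|\pi(\cdot)\|\le\|\cdot\|_{pAp}$. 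With that in hand, your simplicity argument finishes the proof exactly as you say.
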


\begin{proof}
$A$ is simple by \thmref{simple} and \hypref{regular}, and is the enveloping $C^*$-algebra of $A_0$ by \corref{enveloping crossed product}.
Since $A$ is Morita-Rieffel equivalent to $pAp$ via the $A-pAp$ imprimitivity bimodule\footnote{in Rieffel's sense --- i.e., the inner products are positive} $Ap$, it follows that $pAp$ is simple.

Since the $A_0-pA_0p$ imprimitivity bimodule $A_0p$ is dense in the $A-pAp$ imprimitivity bimodule $Ap$, and the right inner product $\<\cdot,\cdot\>_R$ on $A_0p$ is positive by \lemref{positive},
an application of \cite[Proposition~5.5 (iii)]{hecke1} shows that $pAp$ is the enveloping $C^*$-algebra of $pA_0p$, and hence $pA_0p$ has a unique $C^*$-norm because $pAp$ is simple.
\end{proof}


\section{Embedding the Hecke algebra}\label{hecke}

We continue to assume that $\UU$ has finite quotients (condition~\eqref{finite})
and that $H$ acts effectively on $N$ (condition~\eqref{effective});
this implies that $M$ is  in fact a Hecke subgroup of $G$.
To see this, fix $g\in G$ and choose
$a\in H$ and $x\in N$ such that $g=ax$.  
Then $gMg\inv = aMa\inv \in \UU$ (since $M$ is normal in $N$), 
so by~\eqref{filter base} we can choose $U\in\UU$
such that $U\subseteq M\cap aMa\inv$,
and it follows from~\eqref{finite} that
\[
\bigl|M/(M\cap gMg\inv)\bigr| \leq \bigl| M/U \bigr| <\infty.
\]
Furthermore, by~\eqref{separating} we have
\[
 \bigcap_{g\in G} gMg\inv = \bigcap_{U\in\UU} U = \{e\};
\]
so the pair $(G,M)$ is \emph{reduced} in the sense of~\cite{hecke1},
and by~\cite[Lemma~6.3]{hecke1}, condition~\eqref{filter base}
means that the pair is \emph{directed}.

The \emph{Hecke algebra} $\MM$ of the pair $(G,M)$
is a convolution $*$-algebra generated by the double cosets of $M$ in $G$.
By \cite[Theorem~6.4]{hecke1}, 
$\MM$ has a universal enveloping $C^*$-algebra $C^*(\MM)$;
this is the \emph{Hecke $C^*$-algebra} 
of the pair $(G,M)$.

In this section,
we will give sufficient conditions for $C^*(\MM)$
to embed in the Cuntz-Li algebra.
In Section~\ref{BC}, this will be applied to 
the generalized Bost-Connes algebra.

Our embedding will require $\bar N$ to be self-dual:

\begin{thm}
\label{nonabelian}
Let $N$ be abelian, and assume that there exists an isomorphism $\theta:\bar N\to \what{\bar N}$ such that
\begin{align}
\label{sufficient}
&\theta\circ \beta_a(n)=\theta(n)\circ \beta_a\inv
\midtext{for}a\in H,n\in \bar N,\midtext{and}
\\
\label{projection}
&\theta\bigl(\bar M\bigr)=\bigl(\bar M\bigr)^\perp.
\end{align}
Then
the Hecke $C^*$-algebra $C^*(\MM)$ embeds faithfully in the 
corner
$p(D\rtimes_\alpha G)p$, where $p=i_D(\Chi_M)$.
\end{thm}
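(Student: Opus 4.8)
The plan is to factor the embedding $C^*(\MM)\hookrightarrow p(D\rtimes_\alpha G)p$ through the group $C^*$-algebra $C^*(\bar N\rtimes H)=C^*(\bar G)$ and use the hypothesized self-duality $\theta$ to implement a Fourier-type isomorphism. First I would recall (presumably from \secref{hecke}, \cite{hecke1}) that the Hecke $C^*$-algebra $C^*(\MM)$ of the reduced, directed pair $(G,M)$ sits as a full corner $q\,C^*(\bar G)\,q$, where $\bar G=\bar N\rtimes H$ is the completion of $G$ relative to the subgroup topology and $q$ is the projection in $M(C^*(\bar G))$ given by integration against normalized Haar measure on the compact open subgroup $\bar M\subseteq\bar N$; concretely $q=\int_{\bar M}u_s\,ds$. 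So it suffices to produce a faithful homomorphism $\Phi:C^*(\bar G)\to M(D\rtimes_\alpha G)$ carrying $q$ to $p$ and mapping the corner $q\,C^*(\bar G)\,q$ onto $p(D\rtimes_\alpha G)p$.

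To build $\Phi$, I would dualize. Since $N$ is abelian and $\bar N$ is self-dual via $\theta:\bar N\xrightarrow{\sim}\what{\bar N}$, Pontryagin–Takai duality gives $C^*(\bar N)\cong C_0(\what{\bar N})\cong C_0(\bar N)\cong D$ (the last isomorphism by \propref{extend}). Condition~\eqref{sufficient}, $\theta\circ\beta_a=\beta_a\inv{}^{\!*}\circ\theta$, is exactly what is needed for this Fourier isomorphism to intertwine the conjugation action of $H$ on $C^*(\bar N)$ with the action $\bar\alpha$ of $H$ on $C_0(\bar N)\cong D$: under the Fourier transform, the automorphism of $C^*(\bar N)$ dual to $\beta_a$ on $\bar N$ becomes composition with the dual map $\what{\beta_a}=\beta_a\inv$ on $\what{\bar N}\cong\bar N$, which is $\bar\alpha_a$. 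Taking the crossed product by $H$ then yields an isomorphism $C^*(\bar N)\rtimes H\cong D\rtimes_\alpha H$, and adjoining the copy of $N\subseteq\bar N$ inside $\bar N\rtimes H=\bar G$ extends this to an isomorphism (or at least a faithful nondegenerate homomorphism into the multiplier algebra) $C^*(\bar G)\cong D\rtimes_\alpha G$ — here one uses that $D\rtimes_\alpha G$ already contains $D\rtimes_\alpha H$ as the subalgebra fixed appropriately, and that the left-translation part of $\beta$ by $N$ corresponds under Fourier transform to the characters of $\bar N$, i.e.\ to the generators of $D$. Finally, under the Fourier transform the Haar projection $q=\int_{\bar M}u_s\,ds$ of $C^*(\bar N)$ goes to the characteristic function of the annihilator $(\bar M)^\perp\subseteq\what{\bar N}$; condition~\eqref{projection}, $\theta(\bar M)=(\bar M)^\perp$, identifies this annihilator with $\bar M$ itself, so $q\mapsto \Chi_{\bar M}=p$. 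Thus $\Phi(q)=p$, and $\Phi$ restricts to an isomorphism of the full corner $q\,C^*(\bar G)\,q\cong C^*(\MM)$ onto the corner $p(D\rtimes_\alpha G)p$.

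The main obstacle, I expect, is verifying that the Fourier isomorphism $C^*(\bar N)\cong C_0(\bar N)\cong D$ is genuinely equivariant for the \emph{full} group $G$ and not merely for $H$ — i.e.\ pinning down precisely how the left-translation action of $N$ on $\bar N$ (the ``$+b$'' part of the $ax+b$ action $\beta$) is seen on the dual side, and checking it matches the way $N\subseteq G$ acts in the crossed product $D\rtimes_\alpha G$. Left translation by $n\in N$ on $\bar N$ dualizes to multiplication by the character $\theta(n)$ on $\what{\bar N}$, so one must check that, under all the identifications, this is implemented inside $D\rtimes_\alpha G$ by the unitary $i_G(n)$ acting on $D\cong C_0(\bar N)$; this is where the compatibility of $\theta$ with $\beta$ (again \eqref{sufficient}) does the real work, and where a careful bookkeeping of the semidirect-product structure is required. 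A secondary technical point is confirming that $\Phi$ is injective: since $C^*(\bar G)$ need not be simple, faithfulness is not automatic, but it follows because $\Phi$ is built as a composition of Fourier/Takai isomorphisms together with the canonical inclusion $C^*(\bar G)\hookrightarrow M(D\rtimes_\alpha G)$, each step of which is faithful. Everything else — the corner-to-corner statement, fullness of $q$ and $p$ — is then formal, using that $C^*(\MM)$ is a full corner of $C^*(\bar G)$ and that $\Phi$ is an isomorphism onto its range.
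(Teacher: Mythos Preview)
Your overall strategy---identify $C^*(\MM)$ with the corner $q\,C^*(\bar G)\,q$, then use the Fourier transform together with the self-duality $\theta$ to map $C^*(\bar G)$ into the crossed product---is exactly the paper's approach. But there is a genuine error in the way you assemble the pieces.

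You assert (or at least aim for) an isomorphism $C^*(\bar G)\cong D\rtimes_\alpha G$, and you identify as the ``main obstacle'' checking equivariance for the translation action of $N\subseteq G$. This cannot work. Under the Fourier isomorphism $C^*(\bar N)\cong C_0(\bar N)$, the unitary $u_n\in C^*(\bar N)$ associated to $n\in N\subseteq\bar N$ is sent to the \emph{character} $\theta(n)\in C_b(\bar N)$, which is a central multiplier of $C_0(\bar N)$. By contrast, the unitary $i_G(n)$ in $D\rtimes_\alpha G$ implements the (nontrivial) left-translation automorphism $\alpha_n$ and is certainly not central. So your map cannot send $u_n$ to $i_G(n)$, and in fact $C^*(\bar G)=C^*(\bar N)\rtimes_\gamma H$ is isomorphic only to $C_0(\bar N)\rtimes_\alpha H$, not to the larger algebra $C_0(\bar N)\rtimes_\alpha G=D\rtimes_\alpha G$. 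Consequently your final claim that $\Phi$ maps the corner $q\,C^*(\bar G)\,q$ \emph{onto} $p(D\rtimes_\alpha G)p$ is also false: the image lies in the proper subalgebra $p(D\rtimes_\alpha H)p$.

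The fix is simply to drop the attempt to match the $N$-action: the paper shows that the Fourier-plus-$\theta$ map $\rho:C^*(\bar N)\to C_0(\bar N)$ is $\gamma$--$\alpha$ equivariant for $H$ (this is exactly what condition~\eqref{sufficient} gives), so one obtains an isomorphism $C^*(\bar G)\cong C_0(\bar N)\rtimes_\alpha H$ taking $q$ to $p$ (using \eqref{projection}). Composing with the canonical inclusion $C_0(\bar N)\rtimes_\alpha H\hookrightarrow C_0(\bar N)\rtimes_\alpha G\cong D\rtimes_\alpha G$ then gives the desired faithful embedding of $C^*(\MM)$ into $p(D\rtimes_\alpha G)p$. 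No $N$-equivariance is needed, and the theorem asserts only an embedding, not surjectivity onto the corner.
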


\begin{proof}
The subgroup topology on $G$ determined by $\UU$ 
is precisely the \emph{Hecke topology} of the pair $(G,M)$
(\cite[Definition~3.3]{hecke1}),
so the Hausdorff completion $\bar G$ of $G$
with respect to this topology is a locally compact
(and totally disconnected) group,
the closure $\bar M$ of $M$ in $\bar G$ is 
a compact open subgroup of $\bar G$,
and the Hausdorff completion $\bar N$ of $N$
is (identified with) the closure of $N$ in $\bar G$
(\cite[Section~3]{hecke1}).  
We normalize the left Haar measure on $\bar G$
so that $\bar M$ has measure~$1$;
thus $\Chi_{\bar M}$ is a projection 
in the convolution $*$-algebra $C_c(\bar G)$,
and $\MM$ can be identified with the corner $\Chi_{\bar M}C_c(\bar G)\Chi_{\bar M}$.
Moreover, since $(G,M)$ is directed,
$\Chi_{\bar M}$ is a full projection in the group $C^*$-algebra $C^*(\bar G)$,
and we can identify $C^*(\MM)$
with the full corner $\Chi_{\bar M} C^*(\bar G)\Chi_{\bar M}$ (\cite[Theorem~6.4]{hecke1}).

Now, the $ax+b$ group action $\beta$ of the discrete semidirect product $G=N\rtimes H$ by homeomorphisms of the space $\bar N$ (extended from the $ax+b$ group action on $N$, where we now drop the bar on the notation for the extended action) restricts to the action of $H$ by automorphisms of $\bar N$ that defines the semidirect product $\bar G=\bar N\rtimes H$, and we continue to denote this action by $\beta:H\to \aut \bar N$.
This in turn determines an action $\gamma:H\to \aut C^*(\bar N)$ such that
\[
C^*(\bar G)\cong C^*(\bar N)\rtimes_\gamma H,
\]
and the isomorphism carries $\Chi_{\bar M}$ to $i_{C^*(\bar N)}(\Chi_{\bar M})$.

On the other side,
we will replace $D$ with the isomorphic $C^*$-algebra $C_0(\bar N)$ (thus replacing $i_D(\Chi_M)$ with $i_{C_0(\bar N)}(\chim)$), and we will denote the associated action of $G$ on $C_0(\bar N)$ by $\alpha$ (rather than $\bar\alpha$ as we did previously).
Thus, to prove the theorem it suffices to find an isomorphism
\[
C^*(\bar N)\rtimes_\gamma H\cong C_0(\bar N)\rtimes_\alpha H
\]
that takes $i_{C^*(\bar N)}(\chim)$ to $i_{C_0(\bar N)}(\chim)$,
and for this it suffices to find a $\gamma-\alpha$ equivariant isomorphism
{that takes $C^*(\bar N)$ onto $C_0(\bar N)$ and preserves $\chim$.}

We claim that the isomorphism $\rho$ defined by the commutative diagram
\[
\xymatrix{
C^*(\bar N) \ar[r]^\rho \ar[d]_{f\mapsto \what f}
&C_0(\bar N)
\\
C_0(\what{\bar N}) \ar[ur]_{f\mapsto f\circ \theta}
}
\]
does the job,
where $\what f$ denotes the Fourier transform of $f$, for which we use the convention
{(again normalizing so that $\int_{\bar N}\Chi_{\bar M}(n)\,dn = 1$) that}
\[
\what f(\chi)=\int_{\bar N} f(n)\bar{\chi(n)}\,dn.
\]

In preparation for the verification of this claim, we record the formula for $\gamma$:
for $a\in H$ and $f\in C_c(\bar N)$ we have
\begin{align*}
\gamma_a(f)
&=\int_{\bar N} f(n)\beta_a(n)\,dn
\\&=\Delta_\beta(a)\int_{\bar N} f(\beta_a\inv(n))n\,dn
&&\text{(for some scalar $\Delta_\beta(a)$)}
\\&=\Delta_\beta(a)f\circ \beta_a\inv.
\end{align*}
Then for the same $a,f$, and for $n\in \bar N$, we have
\begin{align*}
\rho\bigl(\gamma_a(f)\bigr)(n)
&=\bigl(\gamma_a(f)\spwhat\circ \theta\bigr)(n)
\\&=\Delta_\beta(a)(f\circ \beta_a\inv)\spwhat(\theta(n))
\\&=\Delta_\beta(a)\Delta_\beta(a)\inv\what f\bigl(\theta(n)\circ \beta_a\bigr)
\\&=\what f\bigl((\theta\circ \beta_a\inv)(n)\bigr)
&&\text{(by \eqref{sufficient})}
\\&=\bigl(\what f\circ\theta\bigr)\bigl(\beta_a\inv(n)\bigr)
\\&=\rho(f)\bigl(\beta_a\inv(n)\bigr)
\\&=\alpha_a\bigl(\rho(f)\bigr)(n),
\end{align*}
where the third equality follows from the following calculation: for $\chi\in \what{\bar N}$ we have
\begin{align*}
(f\circ \beta_a\inv)\spwhat(\chi)
&=\int_{\bar N}f\circ \beta_a\inv(n)\bar{\chi(n)}\,dn
\\&=\Delta_\beta(a)\inv \int_{\bar N}f(n)\bar{\chi(\beta_a(n))}\,dn
\\&=\Delta_\beta(a)\inv\what f(\chi\circ \beta_a).
\end{align*}

Thus $\rho$ is a $\gamma-\alpha$ equivariant isomorphism.
Since the Fourier transform of $\chim$ is $\Chi_{\bar M^\perp}$,
and since our hypothesis 
\eqref{projection}
implies that
$\Chi_{\bar M^\perp}\circ\theta=\chim$,
we have
\[
\rho(\chim)=\chim,
\]
as required.
\end{proof}

\subsection*{The abelian case}

We now assume that $H$ is abelian, and derive an alternative sufficient condition for embedding:

\begin{thm}
\label{abelian}
Let both $N$ and $H$ be abelian, and assume that there exists an isomorphism $\theta:\bar N\to \what{\bar N}$ such that
\begin{align}
\label{alternative}
&\theta\circ \beta_a(n)=\theta(n)\circ \beta_a
\midtext{for}a\in H,n\in \bar N,\midtext{and}
\\
\label{projection again}
&\theta\bigl(\bar M\bigr)=\bigl(\bar M\bigr)^\perp.
\end{align}
Then
the Hecke $C^*$-algebra $C^*(\MM)$ embeds faithfully in the 
corner
$p(D\rtimes_\alpha G)p$, where $p=i_D(\Chi_M)$.
\end{thm}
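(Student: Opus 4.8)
The plan is to reduce \thmref{abelian} to \thmref{nonabelian} rather than repeat the argument. The only difference between the two hypotheses is the sign on $\beta_a$ in the second slot: \eqref{sufficient} demands $\theta\circ\beta_a(n)=\theta(n)\circ\beta_a\inv$, whereas \eqref{alternative} gives $\theta\circ\beta_a(n)=\theta(n)\circ\beta_a$. So the natural move is to manufacture, from the data of \thmref{abelian}, a new isomorphism to which \thmref{nonabelian} applies. Since $H$ is now abelian, the inversion map $a\mapsto a\inv$ is a group automorphism of $H$, and I would use it to ``twist'' the action: define a new action $\beta'$ of $H$ on $\bar N$ by $\beta'_a=\beta_{a\inv}$. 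Because $H$ is abelian, $\beta'$ is again an action of $H$ by automorphisms of $\bar N$, and it determines the same semidirect product $\bar G=\bar N\rtimes H$ up to the automorphism of $G=N\rtimes H$ that inverts the $H$-coordinate; in particular $M$, the subgroup topology $\UU$, the completions $\bar N$ and $\bar G$, the projection $p$, and the Hecke pair $(G,M)$ are all unchanged, and the crossed product $D\rtimes_\alpha G$ together with its corner $p(D\rtimes_\alpha G)p$ is carried isomorphically onto the corresponding object for $\beta'$ by this coordinate flip (which fixes $p=i_D(\Chi_M)$ since $M$ is $H$-invariant as a set under both actions).

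Next I would check that $\theta$ satisfies hypothesis \eqref{sufficient} \emph{for the twisted action} $\beta'$. Indeed, for $a\in H$ and $n\in\bar N$,
\[
\theta\circ\beta'_a(n)=\theta\circ\beta_{a\inv}(n)=\theta(n)\circ\beta_{a\inv}=\theta(n)\circ\beta'_{a}\inv,
\]
where the middle equality is exactly \eqref{alternative} applied to $a\inv$ (valid for all elements of $H$ since $a\inv$ ranges over all of $H$), and the last equality is the definition $\beta'_a\inv=\beta_{a\inv}\inv=\beta_a=\beta'_{a\inv}{}\inv$; one just has to track that the ``$\circ$'' on the right is composition of the character $\theta(n)\in\what{\bar N}$ with the automorphism, so that $\theta(n)\circ\beta_{a\inv}=\theta(n)\circ(\beta'_a)\inv$ precisely because $(\beta'_a)\inv=\beta_{a\inv}$. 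Condition \eqref{projection} is literally \eqref{projection again}, unchanged. Therefore \thmref{nonabelian}, applied to the semidirect product $G=N\rtimes_{\beta'}H$ with this same $\theta$, yields a faithful embedding of the Hecke $C^*$-algebra of $(G,M)$ — which is the same Hecke pair, hence the same $C^*(\MM)$ — into the corner $p(D\rtimes_{\alpha'}G)p$ for the twisted system. Composing with the coordinate-flip isomorphism $p(D\rtimes_{\alpha'}G)p\cong p(D\rtimes_\alpha G)p$ gives the desired faithful embedding of $C^*(\MM)$ into $p(D\rtimes_\alpha G)p$.

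The main obstacle is bookkeeping rather than mathematics: one must be careful that the inversion twist really does preserve everything that \thmref{nonabelian} needs on the nose — in particular that $\UU=\{aMa\inv:a\in H\}$ is visibly the same set under $\beta'$ (which it is, since $\{a\inv:a\in H\}=H$), that finite quotients and effectiveness of the $H$-action on $M$ are inherited by $\beta'$, and that the identification of $\MM$ with $\Chi_{\bar M}C_c(\bar G)\Chi_{\bar M}$ and of $C^*(\MM)$ with the full corner $\Chi_{\bar M}C^*(\bar G)\Chi_{\bar M}$ is insensitive to the twist because $\bar G$ and $\bar M$ do not change. Once these identifications are pinned down, the conclusion is immediate. (Alternatively, and essentially equivalently, one can run the proof of \thmref{nonabelian} verbatim with $\beta_a$ in place of $\beta_a\inv$ throughout the displayed computation of $\rho(\gamma_a(f))$; since $H$ is abelian, $\gamma_a(f)=\Delta_\beta(a)f\circ\beta_a\inv$ still holds, the two occurrences of $\Delta_\beta(a)^{\pm1}$ still cancel, and the single invocation of \eqref{sufficient} is replaced by the single invocation of \eqref{alternative}, landing on $\alpha_a(\rho(f))$ exactly as before.)
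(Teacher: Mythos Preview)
Your overall strategy --- use the inversion $a\mapsto a\inv$ on the abelian group $H$ to reduce to \thmref{nonabelian} --- is exactly the paper's idea, but your execution contains a sign error that makes the reduction fail as written.

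In your displayed verification that $\theta$ satisfies \eqref{sufficient} for the twisted action $\beta'$, you assert $(\beta'_a)\inv=\beta_{a\inv}$. That is false: by definition $\beta'_a=\beta_{a\inv}$, so $(\beta'_a)\inv=(\beta_{a\inv})\inv=\beta_a$. Tracking this through, your chain of equalities actually gives
\[
\theta\circ\beta'_a(n)=\theta(n)\circ\beta_{a\inv}=\theta(n)\circ\beta'_a,
\]
so $\theta$ satisfies \eqref{alternative} for $\beta'$, not \eqref{sufficient}. In fact condition \eqref{alternative} is invariant under the symmetric twist $\beta\mapsto\beta'$: twisting both sides of the picture cannot convert \eqref{alternative} into \eqref{sufficient}, so \thmref{nonabelian} does not apply to $(N\rtimes_{\beta'}H,\theta)$ either. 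The same sign slip appears in your parenthetical alternative: running the computation of $\rho(\gamma_a(f))$ with \eqref{alternative} in place of \eqref{sufficient} lands on $\rho(f)(\beta_a(n))=\alpha_{a\inv}(\rho(f))(n)$, not on $\alpha_a(\rho(f))(n)$.

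The paper fixes this by twisting \emph{asymmetrically}. It leaves the target $C_0(\bar N)\rtimes_\alpha H$ untouched and instead applies the coordinate flip $(n,a)\mapsto(n,a\inv)$ only on the source side, writing $C^*(\bar G)=C^*(\bar N\rtimes_\beta H)\cong C^*(\bar N\rtimes_{\beta'}H)=C^*(\bar N)\rtimes_{\gamma'}H$ with $\gamma'_a=\gamma_{a\inv}$. Then one checks (this is the ``same calculations'' the paper refers to) that the Fourier isomorphism $\rho$ is $\gamma'$--$\alpha$ equivariant: the computation that with \eqref{alternative} gives $\rho\circ\gamma_a=\alpha_{a\inv}\circ\rho$ becomes, after replacing $a$ by $a\inv$, exactly $\rho\circ\gamma'_a=\alpha_a\circ\rho$. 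Your argument can be repaired along the same lines --- twist on one side only --- but as it stands the key verification is incorrect.
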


\begin{proof}
Recall from the proof of \thmref{nonabelian} that we have an action $\beta:H\to \aut \bar N$ giving the semidirect product $\bar G=\bar N\rtimes_\beta H$, and that it suffices to find an embedding of $C^*(\bar G)$ in $C_0(\bar N)\rtimes_\alpha H$ taking $\chim$ to $i_{C_0(\bar N)}(\chim)$.
Since $H$ is abelian, we can define another action $\beta':H\to \aut \bar N$ by
\[
\beta'_a=\beta_a\inv.
\]
A routine calculation shows that the assignment
\[
(n,a)\mapsto (n,a\inv)
\]
gives an isomorphism of semidirect products:
\[
\bar N\rtimes_\beta H\cong \bar N\rtimes_{\beta'} H,
\]
so it suffices to embed
\[
C^*(\bar N\rtimes_{\beta'} H)\cong C^*(\bar N)\rtimes_{\gamma'} H
\]
in $C_0(\bar N)\rtimes_\alpha H$,
where $\gamma':H\to \aut C^*(N)$ is the action associated to $\beta'$.
Again, it suffices to show that the same isomorphism $\rho:C^*(N)\cong C_0(N)$ as we used in the proof of \thmref{nonabelian} is now $\gamma'-\alpha$ equivariant and preserves $\chim$.
But the same calculations as in that proof accomplishes this, using the modified hypothesis \eqref{alternative} rather than \eqref{sufficient}.
\end{proof}

\begin{rem}
\label{symmetric}
Conditions \eqref{sufficient} -- \eqref{projection again} can be expressed using a bicharacter: 
assuming that $N$ is abelian, and that there is an isomorphism $\theta:\bar N\to \what{\bar N}$,
we can define a bicharacter on $\bar N$ by
\[
B(x,y)=\theta(y)(x)\midtext{for}x,y\in \bar N.
\]
Then \eqref{sufficient} is equivalent to
\begin{equation}
\label{bicharacter1}
B(\beta_a(x),\beta_a(y))=B(x,y)\midtext{for}a\in H,x,y\in \bar N.
\end{equation}
Condition~\eqref{alternative} is equivalent to symmetry of the automorphisms $\beta_a$:
\begin{equation}
\label{bicharacter}
B(\beta_a(x),y)=B(x,\beta_a(y))\midtext{for}a\in H,x,y\in \bar N,
\end{equation}
and \eqref{projection} and \eqref{projection again} are equivalent to:
\begin{equation}
\label{M selfdual}
	x\in \bar M \iff B(x,y)=1 \midtext{for all}y\in \bar M.
\end{equation}
\end{rem}

\section{Integral domains}
\label{integral}

In 
Sections~\ref{prelim} and \ref{G} 
we introduced a general context where $G=N\rtimes H$ is a semidirect product with certain properties, and $\alpha$ is an action of $G$ on a commutative $C^*$-algebra $D$. A source of examples comes from integral domains, as in the work of Cuntz and Li \cite{CLintegral}.
Let $R$ be an integral domain that is not a field, 
so that in particular its group of units is strictly contained in the set $R\minus\{0\}$ of nonzero elements,
and assume that $R$ has \emph{finite quotients} in the sense that
\begin{equation}\label{ring hyp}
\bigl|R/aR\bigr|<\infty\all a\in R\minus\{0\}.
\end{equation}

Let $Q=Q(R)$ be the field of fractions of $R$,
and in the notation of the previous sections,
take:
\begin{itemize}
\item $N=Q$ (the additive group of $Q$);

\item $H=Q^\times$ (the multiplicative group of the field $Q$);

\item $M=R$ (the additive group of the ring $R$).
\end{itemize}
Thus $G= N\rtimes H = Q\rtimes Q^\times$ is the $ax+b$ group of $Q$, 
and our assumptions 
\eqref{separating}--\eqref{effective}
hold in this situation.
Moreover, because $G$ is amenable in this context we have $D\rtimes_{\alpha,r} G=D\rtimes_\alpha G$.

\begin{thm}\label{domain theorem}
With the above notation, the crossed product $D\rtimes_\alpha G$ is simple and purely infinite.
\end{thm}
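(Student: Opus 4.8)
The plan is to deduce \thmref{domain theorem} from \thmref{simple} together with \hypref{regular}. The statement of \thmref{simple} concludes that the \emph{reduced} crossed product $D\rtimes_{\alpha,r}G$ is simple and purely infinite, provided that $\UU$ has finite quotients (condition~\eqref{finite}) and that $H$ acts effectively on $M$ (condition~\eqref{effective}). So the proof has two parts: first, verify that the integral-domain setup of this section actually satisfies all the hypotheses of \thmref{simple}; second, upgrade the conclusion from the reduced to the full crossed product by observing that $G$ is amenable, so that $D\rtimes_\alpha G = D\rtimes_{\alpha,r}G$.

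For the first part, I would check each hypothesis in turn for $N=Q$, $H=Q^\times$, $M=R$, with $\UU=\{aRa\inv:a\in Q^\times\}=\{aR:a\in Q^\times\}$ (since $Q$ is abelian, conjugation by $a\in Q^\times$ on the additive group $Q$ is just multiplication by $a$, and $M=R$ is automatically normal in the abelian group $N=Q$). The filter-base condition~\eqref{filter base}: given $aR$ and $bR$, note $abR\subseteq aR\cap bR$ since $ab\cdot r = a\cdot(br) = b\cdot(ar)$, so $W=abR$ works. The separating condition~\eqref{separating}: $\bigcap_{a\in Q^\times} aR = \{0\}$, because any nonzero $q\in Q$ can be written $q=a$ for a suitable $a\in Q^\times$, and then $q\notin a\cdot aR$ would require... more carefully, $\bigcap_a aR$ is an ideal of $Q$ closed under the $Q^\times$-action, hence either $\{0\}$ or $Q$; since $R\neq Q$ (as $R$ is not a field, some nonzero $a\in R$ is not a unit, so $a\inv\notin R$, whence $1 = a\cdot a\inv \notin aR$, so $aR\subsetneq Q$), the intersection is $\{0\}$. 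Finite quotients~\eqref{finite}: whenever $bR\subseteq aR$, i.e. $a\inv b\in R$, we have $|aR/bR| = |R/(a\inv b)R|<\infty$ by the standing assumption~\eqref{ring hyp}; this needs $a\inv b\neq 0$, which holds. Effectiveness~\eqref{effective}: if $a\in Q^\times$ acts trivially by conjugation (= multiplication) on $M=R$, then $as=s$ for all $s\in R$; taking $s=1\in R$ gives $a=1$. Hence the only $a$ fixing all of $M$ is $a=e$, so condition~\eqref{effective} holds. Also $M\neq\{e\}$ since $R$ contains $1\neq 0$. Thus all the hypotheses of \thmref{simple} are in place.

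For the second part, the point is simply that $G=Q\rtimes Q^\times$ is amenable: it is a solvable (in fact metabelian) discrete group, being an extension of the abelian group $Q^\times$ by the abelian group $Q$, and solvable groups are amenable. Consequently the regular representation $D\rtimes_\alpha G\to D\rtimes_{\alpha,r}G$ is an isomorphism, which is exactly \hypref{regular}; as already noted after \hypref{regular}, amenability of $G$ guarantees it. Therefore $D\rtimes_\alpha G \cong D\rtimes_{\alpha,r}G$, and the latter is simple and purely infinite by \thmref{simple}. This transfers to $D\rtimes_\alpha G$.

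I do not anticipate a genuine obstacle here; every step is a short verification. If anything is mildly delicate, it is pinning down that $\bigcap_{a\in Q^\times} aR=\{0\}$ cleanly — the quickest route is the ideal-of-$Q$ argument above, using that $R$ is not a field precisely to rule out $R=Q$ and hence to rule out the intersection being all of $Q$. Everything else (the filter-base inclusion via $W=abR$, finite quotients from~\eqref{ring hyp}, effectiveness from evaluating at $1\in R$, amenability of the metabelian group $G$) is routine. One could also simply cite the sentence already in the text preceding the theorem, which asserts that assumptions~\eqref{separating}--\eqref{effective} hold and that $G$ is amenable, and then the proof reduces to the single line ``apply \thmref{simple} and \hypref{regular}.''
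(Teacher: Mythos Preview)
Your proposal is correct and follows essentially the same approach as the paper: the paper's proof is the single sentence ``This follows immediately from \thmref{simple} and amenability of $G$,'' relying on the assertion just before the theorem that conditions \eqref{separating}--\eqref{effective} hold and that $G$ is amenable. Your additional verifications of those conditions are accurate and simply flesh out what the paper leaves to the reader.
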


\begin{proof}
This follows immediately from 
\thmref{simple} 
and amenability of $G$.
\end{proof}

In \secref{ring} below we will show that the Cuntz-Li algebra $\fA[R]$ of \cite{CLintegral} is isomorphic to a corner of $D\rtimes_\alpha G$.

\section{Embedding the generalized Bost-Connes algebra}
\label{BC}

Let $K=\Q(\theta)$ be an algebraic number field with $\theta$ an algebraic integer, 
let $N$ and $M$ be the additive groups of $\Q(\theta)$ and $\Z[\theta]$,
respectively,
and let
$H$ be the multiplicative group of $K$.
In this situation the Hecke $C^*$-algebra $C^*(\MM)$, as defined in \secref{hecke},
is the generalized Bost-Connes $C^*$-algebra.  
In combination with \thmref{Cuntz Li}, 
Corollary~\ref{embed} below will recover
Cuntz and Li's embedding of the Bost-Connes algebra in $\fA[R]$.

Sometimes,
$M$ equals the ring $R$ of integers in $K$; 
if so, $K$ is called a {\it monogenic} field.
In general there is $s\in\N$ such that $s R\subset M\subset R$. 
Therefore the topologies defined by  $M$ and $R$ will be the same,
in particular the completion $\bar N$ is  the ring $\AA_f$ of finite adeles of $K$, \cf\  \cite[Chapter VII, \S2, no. 4, Prop. 3]{bourbaki:CA} and the preceding discussion there.

\begin{cor}
\label{embed}
If  $M=R$ and with the above notation,  the generalized Bost-Connes algebra $C^*(\MM)$ embeds faithfully in the corner $p(D\rtimes_\alpha G)p$, where $p=i_D(\Chi_M)$.
\end{cor}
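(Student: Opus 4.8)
The plan is to derive \corref{embed} from the abelian embedding criterion \thmref{abelian}. In the present set-up $N=\Q(\theta)^{+}$ and $H=K^{\times}$ are both abelian, so \thmref{abelian} is the relevant tool. By the discussion preceding the corollary, $\bar N=\AA_{f}$ is the ring of finite adeles of $K$; and since $M=R=\Z[\theta]=\OO_{K}$ (the monogenic hypothesis), the closure $\bar M=\bar R$ is the maximal compact subring $\prod_{v}\OO_{v}\subseteq\AA_{f}$. The $ax+b$ action of $a\in H$ on $N$ is multiplication by $a$, and it extends to $\bar N=\AA_{f}$ as multiplication by the (diagonal image of) $a$, which lies in $\AA_{f}^{\times}$. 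Now, because $N$ is abelian, \remref{symmetric} lets me replace the hypotheses \eqref{alternative} and \eqref{projection again} of \thmref{abelian} by their bicharacter forms: it suffices to produce a nondegenerate symmetric bicharacter $B$ on $\AA_{f}$ satisfying the $H$-symmetry condition \eqref{bicharacter} and the self-duality condition \eqref{M selfdual}. The isomorphism $\vartheta\colon\bar N\to\what{\bar N}$, $\vartheta(y)=B(\,\cdot\,,y)$, then plays the role of ``$\theta$'' in \thmref{abelian}.

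To build $B$ I would start from the standard additive character $\psi$ of $\AA_{f}$, namely the finite part of the standard character of the full adele ring $\AA_{K}$ (concretely $\psi=\psi_{\Q,f}\circ\operatorname{Tr}_{K/\Q}$, with $\psi_{\Q,f}$ the primitive character whose kernel at each $p$ is exactly $\Z_{p}$). This $\psi$ is nontrivial, hence nondegenerate. The one arithmetic input I would invoke is the classical identification of the $\psi$-annihilator of $\bar R$ with the inverse different, $(\bar R)^{\perp}=\mathfrak d\inv\bar R$, where $\mathfrak d=\mathfrak d_{K/\Q}$ --- this is just the adelic restatement of the definition of the codifferent. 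Since $K$ is monogenic with $R=\Z[\theta]$, the different is principal, $\mathfrak d=\bigl(f'(\theta)\bigr)$ with $f$ the minimal polynomial of $\theta$; so I would put $\delta=f'(\theta)\in K^{\times}\subseteq\AA_{f}^{\times}$ and define
\[
B(x,y)=\psi\bigl(\delta\inv xy\bigr)\qquad\text{for }x,y\in\AA_{f}.
\]
(One could avoid using monogenicity here by instead taking $\delta$ to be any idele with $\delta\bar R=\mathfrak d\bar R$, which exists because $\mathfrak d\bar R$ is a compact open $\bar R$-submodule of $\AA_{f}$.)

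The remaining verifications are short. Bilinearity of $B$ is immediate from additivity of $\psi$ and biadditivity of multiplication; symmetry and nondegeneracy follow from commutativity of multiplication in $\AA_{f}$, the invertibility $\delta\inv\in\AA_{f}^{\times}$, and nondegeneracy of $\psi$, so $\vartheta$ is a topological isomorphism $\bar N\to\what{\bar N}$. Condition \eqref{bicharacter} reads $B(\beta_{a}x,y)=\psi(\delta\inv(ax)y)=\psi(\delta\inv x(ay))=B(x,\beta_{a}y)$, which holds because $a$ is central in the multiplication of $\AA_{f}$. For \eqref{M selfdual} one computes that $B(x,\bar R)=\{1\}$ holds iff $\delta\inv x\,\bar R\subseteq\ker\psi$, iff $\delta\inv x\in\mathfrak d\inv\bar R$, iff $x\in\delta\,\mathfrak d\inv\bar R=\bar R$ (using $(\delta)=\mathfrak d$). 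Thus $B$ has both required properties, and \thmref{abelian} applies to give the faithful embedding $C^{*}(\MM)\hookrightarrow p(D\rtimes_{\alpha}G)p$.

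The genuine content, and the only place where anything nontrivial happens, is the self-duality clause \eqref{M selfdual}: the $H$-equivariance \eqref{bicharacter} is forced by commutativity and costs nothing, but identifying the $\psi$-annihilator of $\bar R$ with $\mathfrak d\inv\bar R$ and then correcting by $\delta$ so that $\bar R$ becomes \emph{exactly} self-dual is what brings in the arithmetic of $K$. In the monogenic case this correction is clean because the different is the principal ideal generated by $f'(\theta)$; for a general number field (which \corref{embed} does not address) one would instead need the idele $\delta$ above, together with the observation --- made in the paragraph preceding the corollary --- that $\bar M=\bar R$ holds regardless of whether $M=R$.
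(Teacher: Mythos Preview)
Your argument is correct; both you and the paper reduce to \thmref{abelian} via \remref{symmetric}, and both dispatch the $H$-symmetry condition~\eqref{bicharacter} by commutativity of multiplication. The only substantive step is the self-duality~\eqref{M selfdual} of $\bar M=\bar R$, and here the two proofs diverge in presentation. The paper proceeds by proving a small auxiliary result (\lemref{selfdual}): it defines the $\Q$-linear functional $\phi\colon K\to\Q$ that reads off the coefficient of $\theta^{n-1}$, and by a direct triangular computation shows that $\phi(xy)\in\Z$ for all $y\in M$ forces $x\in M$. The bicharacter is then $B(x,y)=\exp(2\pi i\,\phi(xy))$, extended to $\bar N$ by the argument in Lang. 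You instead take the trace character $\psi=\psi_{\Q,f}\circ\operatorname{Tr}_{K/\Q}$ on $\AA_f$, invoke $(\bar R)^\perp=\mathfrak d^{-1}\bar R$, and correct by $\delta=f'(\theta)$ using the monogenicity hypothesis $\mathfrak d=(f'(\theta))$. In fact these are the same pairing: the classical identity $\operatorname{Tr}_{K/\Q}\bigl(\theta^{\,i}/f'(\theta)\bigr)=\delta_{i,n-1}$ for $0\le i\le n-1$ shows that the paper's $\phi$ is exactly $x\mapsto\operatorname{Tr}_{K/\Q}(x/f'(\theta))$, so your $B$ and the paper's coincide. Your route is more conceptual and immediately explains why the different enters; the paper's is more self-contained, requiring no background on the different at the cost of the ad hoc \lemref{selfdual}.
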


For this we need the following:

\begin{lem}
\label{selfdual}
There is a $\Q$-linear map $\phi:K\to \Q$ such that 
	\[x\in M \iff \phi(xy)\in \Z \text{ for all }y\in M.
\]
\end{lem}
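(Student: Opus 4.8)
The plan is to produce the linear functional $\phi$ from the number-theoretic trace form. Let $R$ be the ring of integers of $K$, so that under the hypothesis $M=R$ we must find $\phi\colon K\to\Q$, $\Q$-linear, with $x\in R \iff \phi(xy)\in\Z$ for all $y\in R$; that is, $\phi$ should be a $\Q$-linear functional whose restriction to $R$ generates $\operatorname{Hom}_\Z(R,\Z)$ as an $R$-module, equivalently $R$ should be exactly the set of $x$ with $\phi(xR)\subseteq\Z$. The natural candidate is $\phi = \lambda\circ\operatorname{Tr}_{K/\Q}$ for a suitable scalar $\lambda\in\Q^\times$, where $\operatorname{Tr}_{K/\Q}\colon K\to\Q$ is the field trace; the point of $\lambda$ is to rescale so that the different/codifferent matches up with $\Z$ rather than with a fractional ideal.

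First I would recall the standard facts: the trace form $(x,y)\mapsto\operatorname{Tr}_{K/\Q}(xy)$ is a nondegenerate symmetric $\Q$-bilinear form on $K$, and the \emph{codifferent} (inverse different) $\mathfrak{d}^{-1} = \{x\in K : \operatorname{Tr}_{K/\Q}(xR)\subseteq\Z\}$ is a fractional ideal of $R$ containing $R$, with $\mathfrak{d}^{-1}\supseteq R$ and $\mathfrak{d}^{-1} = \mathfrak{d}^{-1}$ the dual lattice of $R$ under the trace pairing. So $\operatorname{Tr}_{K/\Q}$ itself satisfies $\{x : \operatorname{Tr}_{K/\Q}(xR)\subseteq\Z\} = \mathfrak{d}^{-1}$, which is generally strictly larger than $R$, so the bare trace does not work — this is exactly the obstacle, and it is resolved by scaling. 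Since $\mathfrak{d}^{-1}$ is a nonzero fractional ideal, pick $0\neq m\in\Z$ (or more generally a suitable element) with $m\mathfrak{d}^{-1}\subseteq R$; better, I would choose $\lambda$ so that $\lambda\mathfrak{d}^{-1}$ is not merely contained in $R$ but equal to $R$. Such a $\lambda$ exists \emph{iff} the ideal class of $\mathfrak{d}$ (equivalently of $\mathfrak{d}^{-1}$) is principal. This is where the argument genuinely needs an input: in general the different $\mathfrak{d}$ need not be principal, so no single scalar works. However, note the statement only requires \emph{some} $\Q$-linear $\phi$, not necessarily of the form $\lambda\operatorname{Tr}$; and indeed one can instead take $\phi(x)=\operatorname{Tr}_{K/\Q}(\delta x)$ where $\delta$ is a generator of $\mathfrak{d}^{-1}$ as a fractional ideal — which again presupposes principality.

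Given the subtlety, the cleanest route I expect the authors to take, and the one I would take, is: set $\phi(x) = \operatorname{Tr}_{K/\Q}(\delta x)$ where $\delta\in K^\times$ is chosen so that $\delta\mathfrak{d} = R$ when $\mathfrak{d}$ is principal, and otherwise observe that for the monogenic case $M=R=\Z[\theta]$ the different is $\mathfrak{d}=(f'(\theta))$ with $f$ the minimal polynomial of $\theta$, which \emph{is} principal, generated by $f'(\theta)$ — so one may take $\phi(x)=\operatorname{Tr}_{K/\Q}\!\bigl(x/f'(\theta)\bigr)$. Then the verification is Euler's classical formula: $\{x : \operatorname{Tr}_{K/\Q}(x/f'(\theta))\in\Z \text{ for all such}\}$ computed against the $\Z$-basis $1,\theta,\dots,\theta^{n-1}$ of $R$ gives exactly $R$, because $\operatorname{Tr}_{K/\Q}(\theta^i/f'(\theta)) = \delta_{i,n-1}$ for $0\le i\le n-1$ (and is an integer for all $i\ge 0$), so the dual basis of $1,\theta,\dots,\theta^{n-1}$ under the form $(x,y)\mapsto\operatorname{Tr}_{K/\Q}(xy/f'(\theta))$ is again $1,\theta,\dots,\theta^{n-1}$ up to reversal, whence the dual lattice is $R$ itself.

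So the key steps, in order, are: (1) introduce $\phi(x)=\operatorname{Tr}_{K/\Q}(x/f'(\theta))$, which is manifestly $\Q$-linear since the trace is; (2) recall/cite that $1,\theta,\dots,\theta^{n-1}$ is a $\Z$-basis of $M=R$ by the monogenic hypothesis; (3) compute $\operatorname{Tr}_{K/\Q}(\theta^i/f'(\theta))$ via the partial-fraction/Euler identity, getting $0$ for $0\le i<n-1$, $1$ for $i=n-1$, and an integer for $i\ge n$ (the last point — integrality beyond degree $n-1$ — being what guarantees $\phi(xy)\in\Z$ whenever both $x,y\in R$, i.e. the "only if" direction); (4) conclude that $x\in K$ with $\phi(xy)\in\Z$ for all $y\in R$ forces, upon expanding $x$ in the $\Q$-basis $1,\theta,\dots,\theta^{n-1}$ and testing against $y=\theta^j$, all coordinates of $x$ to lie in $\Z$, hence $x\in R=M$; the converse is step (3)'s integrality. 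The main obstacle is step (3): pinning down the Euler–trace identity and especially the integrality $\operatorname{Tr}_{K/\Q}(\theta^i/f'(\theta))\in\Z$ for all $i\ge0$ — this is exactly the classical computation of the inverse different of a monogenic ring, and I would either cite it (it is in Serre's \emph{Local Fields} or Lang's \emph{Algebraic Number Theory}) or reproduce the one-line generating-function proof that $\sum_{i\ge0}\operatorname{Tr}_{K/\Q}(\theta^i/f'(\theta))\,T^{-i-1} = \sum_{\text{roots }\theta_k} \frac{1}{f'(\theta_k)(T-\theta_k)} = 1/f(T)$, whose Laurent expansion at $\infty$ has integer coefficients because $f$ is monic with integer coefficients.
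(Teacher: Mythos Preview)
Your proposal is correct, but it takes a more number-theoretically elaborate route than the paper does, and your map $\phi$ turns out to be \emph{the same linear functional} as the paper's, just realised differently.

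The paper defines $\phi$ directly as the ``top coefficient'' map: writing each $x\in K$ uniquely as $x=a_0+a_1\theta+\cdots+a_{n-1}\theta^{n-1}$, set $\phi(x)=a_{n-1}$. One then checks by hand, using only the recursion $\theta^n=\alpha_0+\alpha_1\theta+\cdots+\alpha_{n-1}\theta^{n-1}$ coming from the minimal polynomial, that $\phi(\theta^i)\in\Z$ for all $i\ge 0$; this gives $\phi(xy)\in\Z$ for $x,y\in M$. The converse is exactly your step~(4): testing against $y=1,\theta,\theta^2,\dots$ yields a triangular system forcing $a_{n-1},a_{n-2},\dots,a_0\in\Z$ in turn. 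No trace, no different, no citations.

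Your $\phi(x)=\operatorname{Tr}_{K/\Q}(x/f'(\theta))$ coincides with the paper's map, since by Euler's identity both are $\Q$-linear and send $\theta^i$ to $\delta_{i,n-1}$ for $0\le i\le n-1$. So the two proofs diverge only in how they establish your step~(3): you invoke the generating-function identity $\sum_{i\ge0}\operatorname{Tr}(\theta^i/f'(\theta))T^{-i-1}=1/f(T)$, while the paper does the one-line recursion. Your detour through the codifferent and the question of whether $\mathfrak d$ is principal is unnecessary here: the lemma concerns $M=\Z[\theta]$ (not the full ring of integers), and the dual lattice of $\Z[\theta]$ under the trace pairing is always $(1/f'(\theta))\Z[\theta]$, principal by construction, whether or not $K$ is monogenic. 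What your approach buys is the explicit connection to the trace pairing and hence to the standard self-duality of the adeles; what the paper's approach buys is a completely self-contained two-paragraph argument.
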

\begin{proof} Let $p(x)=\alpha_0+   \alpha_1 x+ \cdots \alpha_{n-1} x^{n-1}-x^n$ 
with all $\alpha_i\in\Z$ be $\theta$'s minimal polynomial. Define 
$\phi:K\to \Q$ by 
	\[
	\phi(a_0+   a_1 \theta+ \cdots a_{n-1} \theta^{n-1})=a_{n-1}.
\]
Clearly 
\begin{align*}
\phi(\theta^i)=	&
\begin{cases}
0 &\text{ for } 0\leq i <n-1\\
1 &\text{ for }  i =n-1\\
s_i\in\Z &\text{ for } n\leq i,
\end{cases}
\end{align*}
so $\phi(xy)\in \Z $ for all $x,y\in M$.

Now suppose $x=a_0+   a_1 \theta+ \cdots a_{n-1} \theta^{n-1}\in K$ satisfies
$\phi(xy)\in \Z $ for all $y\in M$, in particular 
$\phi(x\theta^i)\in \Z $ for all $i$. Checking this for $i=0,1,2,\cdots$ we get
\begin{align*}
a_{n-1}&\in \Z\\
a_{n-2}&+a_{n-1}\phi(\theta^n)\in \Z\\
a_{n-3}&+a_{n-2}\phi(\theta^n)+a_{n-1}\phi(\theta^{n+1})\in \Z	
\end{align*}
\etc \
Since $\phi(\theta^i)\in\Z$ for all $i$, we get $a_{n-1}\in \Z$, then
$a_{n-2}\in \Z$, \etc \ So $x\in M $.
\end{proof}

\begin{proof}[Proof of Corollary~\ref{embed}]
By \thmref{abelian},
we only need to verify 
Hypotheses~\eqref{bicharacter} and \eqref{M selfdual}.
With $\phi$ as in Lemma~\ref{selfdual}, define a character $\lambda$ on $K$ by
	\[
\lambda(x)=\exp(2\pi i\phi(x)),
\]
and a bicharacter $B$ on $K$ by
	\[B(x,y)=\lambda(xy).
\]
Copying the proof of
 \cite[Chapter~XIV, Section~1, Theorem~1]{LangNumberTheory}, 
 $\lambda$  extends to a character of  $\bar N$, and $B$ extends to a bicharacter inducing a self-duality of  $\bar N$.

Clearly $B(x,y)=1$ for all $x,y\in \bar M$.
Conversely, given $x\in \bar N$, we have $x=x_0+m$ with  $x_0\in N$ and $m\in \bar M$. Therefore
\begin{align*}
	& B(x,y)=1 \midtext{for all}y\in \bar M\\
\implies & \lambda((x_0+m)y)=1 \midtext{for all}y\in  M\\
\implies & \lambda(x_0y)=1 \midtext{for all}y\in  M\\
\implies & \phi(x_0y)\in\Z \midtext{for all}y\in  M\\
\implies & x_0\in  M\\
\implies & x\in  \bar M.
\end{align*}
Thus \eqref{M selfdual} holds.
Since $H=K^\times$ acts on $\bar N$  by multiplication (using the canonical embedding of $K$ in $\bar N$), \eqref{bicharacter}
  is  also satisfied.
\end{proof}


\section{lamplighter groups}
\label{further}

Suppose $H$ is an infinite abelian group with $H^+$ an Ore subsemigroup, \ie
	\[ 
	(H^+)\inv \cap (H^+)=\{e\}\quad \text{and}\quad (H^+)\inv H^+=H.
\]
We also assume the following finiteness condition:
	\[
	c\in H^+ \implies H^+ \setminus c H^+ \quad\text{is finite.}
\]

Let $F$ be a finite group and take
	\[N=\bigoplus_H F
	=\{f:H\to F\,|\, \supp(f) \text{ is finite } \}
\]
with pointwise multiplication, where $\supp(f)=\{x\in H| f(x)\neq e\}$.

Then take
	\[M=\bigoplus_{H^+} F
	=\{f\in N\,|\, \supp(f) \subseteq H^+ \},
\]
clearly a normal subgroup of $N$. $H$ acts on $N$ by shifting:
	\[
	_af(x)=f(a\inv x).
\]
Then $G=N\rtimes H$  is called the {\it wreath product} of $H$ and $F$ (\cf\ \cite[pp.~172--176]{Rotman}),
or the {\it lamplighter group} (\cf\ \cite{harpe-topics})
if $H=\Z$ with $H^+=\N$. 

One checks that 
	\[aMa\inv
	=\{f\in N\,|\, \supp(f) \in aH^+ \},
\]
and that $a\in H^+\iff aMa\inv\subseteq M$, so the notation is consistent with \secref{gnr}.

If $a=b\inv c$ with $b,c\in H^+$, then $aMa\inv\cap M\supseteq cMc\inv$, so 
$\{aMa\inv\}$ is downward directed.

Furthermore, if $c\in H^+$ then
	\[M/cMc\inv
	\cong\{f\in N\,|\, \supp(f) \in H^+ \setminus c H^+ \},
\]
which is finite.

If $f\in\cap \,aMa\inv$, then $\supp(f) \subseteq\cap \,aH^+=\emptyset$ (the last equality is not obvious), so $\cap\, aMa\inv=\{e\}$.

$H$ acts effectively on $M$, so all assumptions in \secref{prelim} are satisfied. In addition, it should be clear that $\cup\, aMa\inv=N$.

As to the completions, we see that 
	\[\overline M=\{f:H^+\to F \},
\]
and 
	\[\overline N=\bigcup_{a\in H^+}\{f:a\inv H^+\to F \}.
\]
So with $F$ non-abelian, this gives examples with $\overline N$ non-abelian.

If $F$ is abelian, then $F\cong\widehat F$ by some isomorphism $\theta$. Take
	\[B(f,g)=\prod_i \langle f(i),\theta(g(i\inv))\rangle
\]
and note the nontrivial fact that for $f,g\in\overline N$ the product is finite.
This bilinear form is symmetric, \ie\ satisfies \eqref{bicharacter}. 
Moreover, 
	\[f\in \overline M\iff B(f,g)=1 \text{ for all } g \in \overline M.
\]
Thus \thmref{abelian} can be applied in this situation.


\section{Generators and relations}
\label{gnr}

In this section we shall look at the 
crossed products
$A=D\rtimes G$ and $C=D\rtimes H$ together with the corner subalgebras $pAp$ and $pCp$,
where $p=\Chi_M$.
We shall see 
that,
under \hypref{abelian hyp} below,
the corner algebras have generators satisfying relations \`a la \cite{cuntzq, CLintegral} and that they in fact are universal for these generators and relations.
It turns out that the result for $pAp$ follows from the case of $pCp$, so we will deal with $pCp$ first.
There is some commonality in our approach to the two cases:
we start with a representation of the generators and relations on a Hilbert space $X$. We use a dilation technique to embed  $X$ in a larger Hilbert space $\wilde X$ where we can represent the generators of the full algebras $C$ respectively $A$.  $C$ and $A$ are universal for covariant representations, so finally we only have to cut down with the projection $p$ to get the result.

We continue to assume condition~\eqref{finite}, that $\UU$ has finite quotients;
condition~\eqref{effective}, that $H$ acts effectively on $N$, is not needed for this section.

To begin, let
\[
H^+=\{a\in H:aMa\inv\subset M\}.
\]
From our assumptions it follows that given $h\in H$ there exists $a\in H$ such that
\[
aMa\inv\subset hMh\inv\cap M,
\]
so $a\in H^+$ and $h\inv a \in H^+$, and thus
\[
h=a(h\inv a)\inv\in H^+(H^+)\inv.
\]
It follows that $H$ is directed by the relation
\[
a\le b\midtext{if and only if}b\in aH^+.
\]
Observe that this relation is not a partial ordering in general, since
$ H^+\cap (H^+)\inv$ can be nontrivial.
\begin{obs}
For $a\in H$ we have $a\in H^+$ if and only if $ap=pap$,
where we remind the reader that we identify elements of the
groups $H$ and $G$ with their images in the multipliers of the crossed
products $D\rtimes H$ and $D\rtimes G$.
\end{obs}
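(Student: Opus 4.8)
The plan is to turn the operator equation $ap=pap$, which lives in the multiplier algebra of $D\rtimes H$, into a purely set-theoretic statement about the subgroups $M$ and $aMa\inv$ of $N$, by exploiting the covariance relation together with the fact that $D$ is a commutative algebra of functions on $N$ in which the product of two characteristic functions is the characteristic function of the intersection.

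First I would note that $p=p_M=\Chi_M$ with $M=eMe\inv\in\UU$, so $p$ is one of the spectral projections $p_c$. Conjugating by $a\in H$ and applying covariance, $ap_ca\inv=p_{\beta_a(c)}$ (where $\beta_a$ is conjugation by $a$ on $N$), I get
\[
apa\inv=p_{\beta_a(M)}=p_{aMa\inv}=\Chi_{aMa\inv}\in D.
\]
Since $a$ is a unitary multiplier, the equation $ap=pap$ is equivalent (multiply on the right by $a\inv$, resp.\ by $a$) to $apa\inv=p\,(apa\inv)$, that is, to the identity $\Chi_{aMa\inv}=\Chi_M\,\Chi_{aMa\inv}$ inside the commutative $C^*$-algebra $D$.

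Next I would compute $\Chi_M\,\Chi_{aMa\inv}=\Chi_{M\cap aMa\inv}$ using pointwise multiplication in $\ell^\infty(N)$; this element lies in $D_0$ because the filter-base condition~\eqref{filter base} gives some $W\in\UU$ with $W\subseteq M\cap aMa\inv$, so $M\cap aMa\inv$ is a finite union of $W$-cosets. Distinct subsets of $N$ have distinct characteristic functions, so the displayed identity holds if and only if $aMa\inv=M\cap aMa\inv$, i.e.\ $aMa\inv\subseteq M$, which is exactly the defining condition for $a\in H^+$. That completes the chain of equivalences.

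There is really no obstacle here; the only point deserving a word is that $\Chi_{aMa\inv}\neq\Chi_{M\cap aMa\inv}$ whenever $a\notin H^+$, which is clear since then $aMa\inv\setminus M$ is nonempty. The one piece of bookkeeping is to make sure all manipulations are carried out among multipliers of $D\rtimes H$, where $a$ and $a\inv$ are genuine (unitary) elements, so that the cancellation of the outer $a$'s is legitimate.
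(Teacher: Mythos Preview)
Your argument is correct and is exactly the straightforward verification one would expect: conjugating $p=\Chi_M$ by $a$ via covariance gives $\Chi_{aMa\inv}$, and then $ap=pap$ becomes the pointwise identity $\Chi_{aMa\inv}=\Chi_M\Chi_{aMa\inv}=\Chi_{M\cap aMa\inv}$, which holds precisely when $aMa\inv\subseteq M$. The paper does not supply a proof at all---it records the statement as an Observation and moves on---so your write-up simply fills in the routine details the authors left implicit.
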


\begin{hyp}
\label{abelian hyp}
Throughout this section we assume that
$H$ is abelian and $N=\bigcup_{a\in H^+} a\inv Ma$.
\end{hyp}
This hypothesis will be satisfied in all of our examples.

We adopt the standard conventions that in a unital $C^*$-algebra $B$, an \emph{isometry} is an element $s$ such that $s^*s=1$, a \emph{unitary} is an element $u$ such that $u^*u=uu^*=1$, and a \emph{projection} is an element $p$ such that $p=p^*=p^2$. 

The following relations will be useful for constructing representations of $pCp$:

\begin{defn}
An \emph{SP-family} in a unital $C^*$-algebra $B$ consists of a set $\{S_a:a\in H^+\}$ of isometries in $B$ and a set $\{P(a,m):a\in H^+,m\in M\}$ of projections satisfying the relations\begin{align}
\label{s rep}
&S_aS_b=S_{ab}
\\
\label{p e}
&P(e,e)=1
\\
\label{s p}
&S_a P(b,m)S_a^*=P(ab,ama\inv)
\\
\label{p consistent}
&P(a,k)=\sum_{mbMb\inv\in M/bMb\inv} P(ab,kama\inv){\midtext{for all}b\in H^+}.
\end{align}
\end{defn}

We shall also need the following for constructing representations of~$D$:

\begin{defn}
A \emph{PN-family} in a $C^*$-algebra $B$ consists of a set $\{P(a,n):a\in H^+,n\in N\}$ of projections satisfying the relations
\begin{align}
\label{orthogonal}
&P(a,n)P(a,k)=0\midtext{if}naMa\inv\ne kaMa\inv;
\\
\label{pn consistent}
&P(a,n)=\sum_{mbMb\inv\in M/bMb\inv}P(ab,nama\inv)\midtext{for all}b\in H^+.
\end{align}
\end{defn}

It will sometimes be convenient to have analogues of the above relations with $H^+$ replaced by $H$:

\begin{defn}
A \emph{PNH-family} in a $C^*$-algebra $B$ consists of
a set $\{P(h,n):h\in H,n\in N\}$ of projections satisfying the relations
\begin{align}
\label{h orthogonal}
&P(h,n)P(h,k)=0\midtext{if}nhMh\inv\ne khMh\inv;
\\
\label{pnh consistent}
&P(h,n)=\sum_{mbMb\inv\in M/bMb\inv}p(hb,nhmh\inv)\midtext{for all}b\in H^+.
\end{align}
\end{defn}

The following relations are needed to get representations of $pAp$:

\begin{defn}
An \emph{SU-family} in a unital $C^*$-algebra $B$ consists of a set $\{S_a:a\in H^+\}$ of isometries in $B$ and a set $\{U(m):m\in M\}$ of unitaries in $B$ satisfying the relations~\eqref{s rep} and
\begin{align}
\label{s u}
&S_aU(m)=U(ama\inv)S_a
\\
\label{u consistent}
&\sum_{maMa\inv\in M/aMa\inv} U(m)S_aS_a^*U(m\inv)=1
\\
\label{u rep}
&U(m)U(k)=U(mk).
\end{align}
\end{defn}

Thus, $S$ is an isometric representation of $H^+$ in $B$,
and also $U$ is a unitary representation of $M$ in $B$.

Note that the $C^*$-algebra $pCp$ is unital (with unit $p=\Chi_M$);
we construct an SP-family in $pCp$:

\begin{lem}\label{SP in pCp}
For $a\in H^+$ and $m\in M$ define elements of $pCp$ by
\begin{itemize}
\item $s_a=pap$;

\item $p(a,m)=\Chi_{maMa\inv}$.
\end{itemize}
Then $\{s_a,p(a,m):a\in H^+,m\in M\}$ is an SP-family that generates $pCp$
{as a $C^*$-algebra}.
\end{lem}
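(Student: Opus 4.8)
The plan is to verify the four SP-family axioms \eqref{s rep}--\eqref{p consistent} for the explicit elements $s_a=pap$ and $p(a,m)=\Chi_{maMa\inv}$, and then to argue that these elements generate $pCp$. First I would record the basic facts about the ambient corner: since $p=\Chi_M$, conjugation by $p$ in $C=D\rtimes H$ is ``cutting down to $M$'', so that for $f\in D$ we have $pi_D(f)p=i_D(\Chi_M f)$, and for $a\in H^+$ the Observation preceding \hypref{abelian hyp} gives $pap=ap=pa$ in $M(C)$ — this is what makes $s_a$ genuinely an isometry in the unital algebra $pCp$. Then \eqref{s rep}, $s_as_b=s_{ab}$, is immediate from $s_as_b=pa\,pbp=p\,ab\,p=s_{ab}$ using $ap=pa$ repeatedly (both $a,b\in H^+$ so $ab\in H^+$), and $s_a^*s_a=pa^*pap=pa\inv ap=p$ confirms $s_a$ is an isometry in $pCp$; relation \eqref{p e} is just $p(e,e)=\Chi_{eMe\inv}=\Chi_M=p$, the unit of $pCp$.

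The content is in \eqref{s p} and \eqref{p consistent}. For \eqref{s p}, I would compute $s_a\,p(b,m)\,s_a^* = pa\,\Chi_{mbMb\inv}\,a^*p$, and use covariance in $C$: $a\,i_D(f)\,a^* = i_D(\alpha_a(f))$ with $\alpha_a(f)=f\circ\beta_{a\inv}$, where $\beta$ is the $ax+b$ action; since $a\in H\subseteq G$ acts by conjugation, $\beta_a(x)=axa\inv$, so $\alpha_a(\Chi_{mbMb\inv})=\Chi_{\beta_a(mbMb\inv)}=\Chi_{(ama\inv)(ab)M(ab)\inv}=p(ab,ama\inv)$. One must then absorb the remaining $p$'s: $s_a p(b,m)s_a^* = p\cdot p(ab,ama\inv)\cdot p$, and since $(ab)M(ab)\inv\subseteq M$ (as $ab\in H^+$), the coset $(ama\inv)(ab)M(ab)\inv$ lies inside $M$, so $p\,\Chi_{(ama\inv)(ab)M(ab)\inv}\,p = \Chi_{(ama\inv)(ab)M(ab)\inv} = p(ab,ama\inv)$, giving \eqref{s p}. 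For \eqref{p consistent}, the point is the finite partition of the coset $kaMa\inv$ into cosets of the smaller subgroup $(ab)M(ab)\inv = aMa\inv\cap$ (something): using \eqref{finite}, $aMa\inv$ is a finite disjoint union $\bigsqcup_{mbMb\inv\in M/bMb\inv} (ama\inv)(ab)M(ab)\inv$ — here one translates the partition $M=\bigsqcup mbMb\inv$ by the automorphism $x\mapsto axa\inv$ — so $\Chi_{kaMa\inv}=\sum \Chi_{k(ama\inv)(ab)M(ab)\inv}=\sum p(ab,kama\inv)$, which is \eqref{p consistent}. I would present these as short displayed computations, taking care that the index set in the sum matches the one in the statement.

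Finally, for the generation claim I would argue that $C^*(\{s_a,p(a,m)\})$ contains $pDp$ and all $pap$, $a\in H$, hence equals $pCp$. The projections $p(a,m)=\Chi_{maMa\inv}$ for $a\in H^+$, $m\in M$ already span a dense subspace of $pDp=\Chi_M D$: by \hypref{abelian hyp} $N=\bigcup_{a\in H^+}a\inv Ma$, so every coset $yV\subseteq M$ with $V\in\UU$ can be refined (using \eqref{finite} and the filter-base property) into finitely many cosets of the form $maMa\inv$ with $ma\inv Ma\inv\subseteq M$, i.e. the $\Chi_{maMa\inv}$ linearly span all $\Chi_{yV}$ lying below $p$, and these span a dense subalgebra of $pDp$ by the description of $D$ in \secref{G}. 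To get the group elements: for $h\in H$ arbitrary, write $h=a(h\inv a)\inv$ with $a,h\inv a\in H^+$ as in the discussion before \hypref{abelian hyp}; then $php = p\,a(h\inv a)\inv\,p = s_a s_{h\inv a}^*$ modulo checking that the cutdowns match up, which again uses $ap=pa$ for $a\in H^+$. Since $pCp$ is the closed span of $p i_H(h) i_D(f) p$ over $h\in H$, $f\in D$, and each such element is a product of things in $C^*(\{s_a,p(a,m)\})$, we conclude $pCp=C^*(\{s_a,p(a,m)\})$. The main obstacle I anticipate is bookkeeping in the last step — verifying that an arbitrary $php$ really lies in the generated algebra requires inserting projections carefully and invoking \hypref{abelian hyp} to know the $p(a,m)$ exhaust $pDp$; the SP-relations themselves are routine covariance-and-partition computations.
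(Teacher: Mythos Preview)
Your verification of the SP-family relations is essentially the paper's approach (the paper dismisses it as ``routine''), and your covariance/partition arguments for \eqref{s p} and \eqref{p consistent} are correct. However, there are two errors worth flagging.

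First, a minor one: the Observation before \hypref{abelian hyp} gives $pap=ap$ for $a\in H^+$, but \emph{not} $pa=ap$. Indeed $pa=a\,\Chi_{a\inv Ma}$ while $ap=\Chi_{aMa\inv}\,a$, and these coincide only when $aMa\inv=M$. Fortunately your computations don't actually need $pa=ap$: for instance $s_as_b=(pap)(pbp)=pa\cdot(bp)=p(ab)p=s_{ab}$ uses only $pbp=bp$.

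Second, and more substantively, your formula $php=s_a s_{h\inv a}^*$ is false in general. With $h=ab\inv$ one computes $s_as_b^*=ap\cdot pb\inv=apb\inv=h\,\Chi_{bMb\inv}$, whereas $php=h\,\Chi_{h\inv Mh\cap M}$; these agree only when $a\inv Ma\cap b\inv Mb=M$, which fails for instance when $R=\Z$, $a=2$, $b=4$. The fix is to reverse the order: writing $h=a\inv b$ with $a,b\in H^+$ (possible since $H$ is abelian and $H=H^+(H^+)\inv$), one has
\[
s_a^*\,s_b=(pa\inv)(bp)=p\,a\inv b\,p=php
\]
identically, because $s_a^*=pa\inv$ already carries the left $p$ and $s_b=bp$ carries the right one. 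This is precisely how the paper proceeds: it writes a general element of $pCp$ as $pa\inv b\,\Chi_{ncMc\inv}p$ and observes directly that this equals $s_a^*s_b\,p(c,m)$ (or zero if $n\notin M$), bypassing any separate analysis of $pDp$ and $php$. Your decomposition into ``$pDp$ plus group elements'' is a legitimate route once the formula is corrected, but the paper's one-line spanning-set computation is more economical.
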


\begin{proof}
The verification that $\{s_a,p(a,m)\}$ is an SP-family is routine, with the help of the elementary properties
\begin{itemize}
\item[{}] $p(a,m)=pmapa\inv m\inv p=pms_as_a^*m\inv p$.
\end{itemize}

To see that this SP-family generates $pCp$, note that
\[
C=\clspn\{a\inv b \Chi_{ncMc\inv}:a,b,c\in H^+,n\in N\},
\]
so
\[
pCp
=\clspn\{pa\inv b \Chi_{ncMc\inv}p:a,b,c\in H^+,n\in N\}.
\]
Now,
\[
\Chi_{ncMc\inv}p=\begin{cases}
p(c,n)\case n\in M\\
0\case n\notin M.
\end{cases}
\]
Thus
\[
pCp
=\clspn\{s_a^*s_b p(c,m):a,b,c\in H^+,m\in M\}.
\qedhere
\]
\end{proof}

We similarly construct an SU-family in the unital $C^*$-algebra $pAp$:

\begin{lem}\label{SU in pAp}
For $a\in H^+$ and $m\in M$ define elements of $pAp$ by
\begin{itemize}
\item $s_a=pap$;

\item $u(m)=pmp$.
\end{itemize}
Then $\{s_a,u(m):a\in H^+,m\in M\}$ is an SU-family that generates $pAp$ as a $C^*$-algebra.
\end{lem}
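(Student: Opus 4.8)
The plan is to mirror the structure of the proof of \lemref{SP in pCp}, carrying out the two tasks in parallel: first verify the defining SU-family relations for $\{s_a,u(m)\}$, and then show this family generates $pAp$.

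For the relations, the starting point is the elementary identities in the multiplier algebra of $A=D\rtimes G$: for $a\in H^+$ we have $ap=pap$ (by the Observation), so $s_a=pap=ap=pa$ and hence $s_a^*s_a=pa^*ap=p^2=p$, confirming $s_a$ is an isometry in $pAp$; and for $m\in M$ the covariance relation $m\Chi_M m^* = \Chi_{mM}=\Chi_M$ (since $M$ is a group) gives $mp=pm$, so $u(m)=pmp=mp=pm$ is a unitary in $pAp$ with $u(m)^*=pm^*p=u(m\inv)$. Relation~\eqref{s rep} is immediate from $s_as_b=papbp=ab\,p=s_{ab}$ using $ap=pa$ for $a\in H^+$. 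For~\eqref{s u}, compute $s_au(m)=ap\,mp=amp=ama\inv\,ap=u(ama\inv)s_a$, using that conjugation by $a$ preserves $M$ for $a\in H^+$. Relation~\eqref{u rep} is just $u(m)u(k)=mpkp=mkp=u(mk)$. The one relation requiring genuine work is~\eqref{u consistent}: I would expand $\sum_{maMa\inv\in M/aMa\inv}u(m)s_as_a^*u(m\inv)$ and recognize $s_as_a^*=ap a^*=a\Chi_M a^* = \Chi_{\beta_a(M)}=\Chi_{aMa\inv}$, so $u(m)s_as_a^*u(m\inv)=m\Chi_{aMa\inv}m^* = \Chi_{maMa\inv}$; the sum over coset representatives $m$ of $M/aMa\inv$ then telescopes, via finiteness of the quotient (condition~\eqref{finite}), to $\Chi_M=p$, which is the unit of $pAp$.

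For the generation statement, I would follow \lemref{SP in pCp} almost verbatim. Since $A=D\rtimes G$ and $G$ is spanned (in the relevant sense) by products $a\inv b$ with $a,b\in H^+$ together with elements of $N$ — more precisely $N=\bigcup_{a\in H^+}a\inv Ma$ by \hypref{abelian hyp}, so every $n\in N$ lies in some $a\inv M a$ — we get $A=\clspn\{a\inv b\,\Chi_{ncMc\inv}:a,b,c\in H^+,\ n\in N\}$, hence $pAp=\clspn\{pa\inv b\,\Chi_{ncMc\inv}p:\dots\}$. Now $\Chi_{ncMc\inv}p=\Chi_{ncMc\inv}\Chi_M$ equals $\Chi_{ncMc\inv}$ if $ncMc\inv\subseteq M$ and $0$ otherwise (these cosets are either contained in or disjoint from $M$); when nonzero with $n\in M$ this is $\Chi_{ncMc\inv}=mcMc\inv$-type characteristic function, which by the computation above equals $m\,s_cs_c^*m^* = u(m)s_cs_c^*u(m)^*$ for $m$ a representative with $n\in mcMc\inv$. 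Combined with $pa\inv b\,p = s_a^* s_b$, this shows $pAp=\clspn\{s_a^*s_b\,u(m)s_cs_c^*u(m)^*:a,b,c\in H^+,m\in M\}$, so the SU-family generates $pAp$.

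The main obstacle is bookkeeping rather than conceptual: getting~\eqref{u consistent} right requires identifying $s_as_a^*$ as the characteristic function of $aMa\inv$ (not of $M$), and then checking that $\{maMa\inv:mM/aMa\inv\}$ is exactly a disjoint cover of $M$ — this uses $M/aMa\inv$ being finite and the conjugates being genuine subgroups, so the cosets partition $M$. Everything else is a routine translation of identities in $M(A)$ down to the corner $pAp$, closely parallel to the already-omitted verification in \lemref{SP in pCp}, so I would present it in the same terse style. Thus I expect the proof to read: ``The verification that $\{s_a,u(m)\}$ is an SU-family is routine, using $ap=pa$ and $mp=pm$ together with $s_as_a^*=\Chi_{aMa\inv}$; the generation argument is identical to that of \lemref{SP in pCp}.''
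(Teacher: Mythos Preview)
Your verification of the SU-family relations is correct and matches the paper's approach exactly: the paper records precisely the elementary identities $pap=ap$, $s_as_a^*=apa\inv=\Chi_{aMa\inv}$, and $u(m)=mp=pm$, and declares the rest routine.

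The generation argument, however, has a genuine gap. You write
\[
A=\clspn\{a\inv b\,\Chi_{ncMc\inv}:a,b,c\in H^+,\ n\in N\},
\]
but this is the spanning set for $C=D\rtimes H$, not for $A=D\rtimes G$. You acknowledge in words that $G$ also contains $N$, yet the group element in your displayed span is only $a\inv b\in H$; the $n$ in $\Chi_{ncMc\inv}$ is data for the projection in $D$, not a group element. Consequently your final span $\clspn\{s_a^*s_b\,u(m)s_cs_c^*u(m)^*\}$ is exactly $\clspn\{s_a^*s_b\,p(c,m)\}=pCp$, and you have not shown that the $u(m)$'s together with the $s_a$'s reach all of $pAp$.

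The paper fixes this by writing a general element of $G=N\rtimes H$ as $(a\inv b)(r\inv m r)$ with $a,b,r\in H^+$ and $m\in M$ (using $H=H^+(H^+)\inv$ and $N=\bigcup_{r\in H^+}r\inv Mr$), so that
\[
A=\clspn\{a\inv b\,r\inv m r\,\Chi_{n cMc\inv}:a,b,r,c\in H^+,\ m\in M,\ n\in N\}.
\]
After compressing by $p$ and discarding the zero terms one has $\Chi_{m_1cMc\inv}=u(m_1)s_cs_c^*u(m_1\inv)$ as you noted, but now also
\[
p\,a\inv b\,r\inv m r\,p=s_{ra}^*\,s_b\,u(m)\,s_r,
\]
which is where the unitaries $u(m)$ genuinely enter the spanning set and cannot be absorbed into a projection. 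Without this $r\inv m r$ factor your argument does not see, for instance, the element $u(m)=pmp$ itself as lying in the span.
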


\begin{proof}
The verification that $\{s_a,u(m)\}$ is an SU-family is routine, using the elementary properties
\begin{itemize}
\item $pap=ap$;
\item $s_as_a^*=apa\inv=\Chi_{aMa\inv}$;
\item $u(m)=mp=pm$.
\end{itemize}

To see that this SU-family generates $pAp$, note that
\begin{multline*}
A
=\clspn\{a\inv br\inv mr\Chi_F:
\\a,b,r,c\in H^+,m\in M,F=ncMc\inv, n\in N\},
\end{multline*}
so
\begin{multline*}
pAp
=\clspn\{pa\inv br\inv mrp\Chi_F:
\\a,b,r,c\in H^+,F=m_1cMc\inv ,m,m_1\in M \}.
\end{multline*}
The result now follows from 
\begin{align*}
pa\inv br\inv mrp
&=s_{ra}^*s_bu(m)s_r
\\\Chi_{m_1cMc\inv}	&=u(m_1)s_cs_c^*u(m_1\inv).
\qedhere
\end{align*}
\end{proof}

\begin{rem*}
Our notation for the above SP-family and SU-family is similar to that in \cite{CLintegral}.
\end{rem*}

We also construct a PN-family in $D$:

\begin{lem}\label{PN in D}
For $a\in H^+$ and $n\in N$ define an element of $D$ by
\begin{itemize}
\item $p(a,n)=\Chi_{naMa\inv}$.
\end{itemize}
Then $\{p(a,n):a\in H^+,n\in N\}$ is a PN-family that generates $D$ as a $C^*$-algebra.
\end{lem}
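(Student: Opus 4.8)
The plan is to verify the three assertions in turn: that each $p(a,n)$ lies in $D$, that the family $\{p(a,n):a\in H^+,n\in N\}$ satisfies relations \eqref{orthogonal} and \eqref{pn consistent}, and that it generates $D$ as a $C^*$-algebra. Membership is immediate: for $a\in H^+\subseteq H$ we have $aMa\inv\in\UU$, so $p(a,n)=\Chi_{naMa\inv}\in\PP\subseteq D$. Throughout, only conditions \eqref{filter base} and \eqref{finite} are needed; \hypref{abelian hyp} plays no role in this lemma.

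For \eqref{orthogonal}: since $aMa\inv$ is a (normal) subgroup of $N$, the cosets $naMa\inv$ and $kaMa\inv$ are either equal or disjoint, so in the case $naMa\inv\ne kaMa\inv$ we get $p(a,n)p(a,k)=\Chi_{naMa\inv\cap kaMa\inv}=0$. For \eqref{pn consistent}: fix $b\in H^+$, so that $bMb\inv\subseteq M$ and $M/bMb\inv$ is finite by \eqref{finite}. Conjugation by $a$ is an automorphism of $N$ carrying the partition of $M$ into cosets of $bMb\inv$ onto the partition of $aMa\inv$ into cosets of $a(bMb\inv)a\inv=(ab)M(ab)\inv$; left-translating by $n$ then gives
\[
naMa\inv=\bigsqcup_{mbMb\inv\in M/bMb\inv}(nama\inv)(ab)M(ab)\inv,
\]
a finite disjoint union, and taking characteristic functions yields $p(a,n)=\sum_{mbMb\inv\in M/bMb\inv}p(ab,nama\inv)$. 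One checks that the summand depends on $m$ only through the coset $mbMb\inv$, so the indexing is legitimate.

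For generation, it suffices to show $\PP\subseteq\spn\{p(a,n):a\in H^+,n\in N\}$, since $D=C^*(\PP)$ and each $p(a,n)\in D$. Given $\Chi_{xU}\in\PP$ with $x\in N$ and $U\in\UU$, use \eqref{filter base} to choose $W\in\UU$ with $W\subseteq U\cap M$; writing $W=aMa\inv$ with $a\in H$, the inclusion $aMa\inv\subseteq M$ shows $a\in H^+$. Then $xU$ is a disjoint union of cosets of $aMa\inv$, finitely many by \eqref{finite}, and each such coset equals $naMa\inv$ for some $n\in N$, whence $\Chi_{naMa\inv}=p(a,n)$. Thus $\Chi_{xU}=\sum_n p(a,n)\in\spn\{p(a,n)\}$, and so $D=C^*(\PP)\subseteq C^*(\{p(a,n)\})\subseteq D$.

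I do not expect a genuine obstacle here; the only step needing a little care is the index bookkeeping in \eqref{pn consistent} --- identifying $M/bMb\inv$ with the set of cosets of $(ab)M(ab)\inv$ inside $naMa\inv$ and checking well-definedness of the summand --- together with the routine observations that the unions in play are disjoint and finite.
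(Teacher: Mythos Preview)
Your proof is correct and is precisely the routine verification the paper omits (the paper's entire proof reads ``This is routine''). Your argument checks membership in $D$, relations \eqref{orthogonal} and \eqref{pn consistent}, and generation of $D$ in the expected way, and the bookkeeping you flag as needing care is handled properly.
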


\begin{proof}
This is routine.
\end{proof}

The following two theorems are the main results of this section: The above relations give generators and relations for $pCp$ and $pAp$, in a sense made precise below.

\begin{thm}\label{pCp}
Assuming \hypref{abelian hyp}, 
for every SP-family $\{S_a,P(a,m):a\in H^+,m\in M\}$ in a unital $C^*$-algebra $B$
there is a unique homomorphism $\Pi:pCp\to B$ such that
\begin{align*}
\Pi(s_a)&=S_a&&\text{for all }a\in H^+\\
\Pi(p(a,m))&=P(a,m)&&\text{for all }a\in H^+,m\in M.
\end{align*}
\end{thm}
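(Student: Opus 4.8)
The plan is to reduce this to the universal property of the full crossed product $C = D \rtimes_\alpha H$ restricted to the corner $pCp$, via a dilation argument. By \corref{enveloping corner}, $pCp$ is the enveloping $C^*$-algebra of $pC_0p$ where $C_0 = D_0 \rtimes_\alpha H$, and in particular any homomorphism out of $pCp$ is determined by its values on a generating set; so uniqueness of $\Pi$ is immediate from \lemref{SP in pCp}, which says $\{s_a, p(a,m)\}$ generates $pCp$. The substance is existence. First I would reduce to the case of a faithful representation: it suffices to produce, for every representation of an SP-family $\{S_a, P(a,m)\}$ on a Hilbert space $X$, a representation $\Pi$ of $pCp$ on $X$ with the stated values. (For a general unital $C^*$-algebra $B$, faithfully represent $B$ on some $X$ and compose.)

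Next comes the dilation. Given the SP-family on $X$, I would build a larger Hilbert space $\wilde X$ on which the isometries $S_a$, $a \in H^+$, dilate to a unitary representation $\wilde U$ of the group $H = H^+(H^+)^{-1}$ (\hypref{abelian hyp} guarantees $H$ is abelian and directed by $H^+$, so this is the standard regular dilation / inductive-limit construction of a unitary dilation of an isometric representation of an Ore semigroup, as in the Douglas-type techniques advertised in \secref{gnr}). Simultaneously I would dilate the projections $P(a,m)$: relation \eqref{p consistent} is exactly a compatibility condition saying the $P(a,\cdot)$ refine consistently along $H^+$, so on $\wilde X$ they assemble into a PNH-family $\{\wilde P(h,n) : h \in H, n \in N\}$ (using \hypref{abelian hyp}, $N = \bigcup_{a \in H^+} a^{-1}Ma$, to define $\wilde P(h,n)$ for all $n \in N$, not just $n \in M$). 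From the PNH-family and relations \eqref{h orthogonal}--\eqref{pnh consistent} one gets a representation $\pi$ of $D = C^*(\PP)$ on $\wilde X$: the projections $\wilde P(h,n)$ are exactly $\pi(\Chi_{nhMh^{-1}})$, and the consistency/orthogonality relations are precisely what is needed for $\pi$ to be well-defined and multiplicative on $D_0 = \spn\PP$, hence to extend to $D$ by \lemref{enveloping D}. Equivariance of $\pi$ against $\wilde U$ follows from \eqref{s p} together with the covariance formula $g p_c g^{-1} = p_{\beta_g(c)}$ recorded just before \corref{algebraic}: \eqref{s p} is the $S_a$-version of this, and it passes to the unitary dilation because $H$ is abelian.

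Then $(\pi, \wilde U)$ is a covariant representation of $(D, H)$ on $\wilde X$, so it integrates to a representation $\wilde\Pi$ of $C = D \rtimes_\alpha H$ on $\wilde X$. Let $P = \wilde\Pi(p) = \pi(\Chi_M)$; the original space $X$ should be recovered as $P\wilde X$ (this is how the dilation is set up — $P$ is the projection of $\wilde X$ onto $X$), and the compression $b \mapsto P\,\wilde\Pi(b)\,P$ restricted to $pCp$ is the desired $\Pi$. It remains to check the two value formulas: $\Pi(s_a) = P\wilde\Pi(pap)P = P\wilde U_a P = S_a$ because $\wilde U_a$ dilates $S_a$ (i.e. $P\wilde U_a P = S_a$ on $X$ and $S_a X \subseteq X$), and $\Pi(p(a,m)) = P\,\pi(\Chi_{maMa^{-1}})\,P = \wilde P(a,m)|_X = P(a,m)$ by construction of the PNH-family. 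That $\Pi$ is multiplicative — not merely a completely positive compression — uses that the $S_a$ are \emph{isometries} with ranges nested compatibly with the $P(a,m)$; concretely, $\Pi(s_a^* s_b\, p(c,m))$ multiplies correctly because $P$ commutes with the relevant range projections $\wilde U_a \wilde U_a^*$ on the subspace $X$.

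The main obstacle is the dilation step: one must dilate the \emph{isometries} and the \emph{projections simultaneously and compatibly}, so that on $\wilde X$ relations \eqref{s p} and \eqref{p consistent} upgrade to genuine equivariance of a PNH-family under a group action, and so that compression back to $X$ is multiplicative on $pCp$ rather than merely unital completely positive. I expect this to follow the pattern of the SU-family argument that \thmref{pAp} (or its proof) will treat in parallel, and to be where the bulk of the $\epsilon$-free bookkeeping lives; the passage from $(\pi,\wilde U)$ to $\wilde\Pi$ and then to $\Pi$ is then formal given \corref{enveloping corner} and \corref{algebraic}.
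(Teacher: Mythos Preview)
Your outline matches the paper's proof closely: faithfully represent $B$ on $X$, dilate the isometric $H^+$-representation to a unitary $H$-representation $\wilde S$ on $\wilde X$ via Douglas, transport the projections to $\wilde X$ to obtain a PNH-family and hence a representation $\pi$ of $D$, check $(\pi,\wilde S)$ is covariant, integrate to $\wilde\Pi:C\to\BB(\wilde X)$, and compress. So the strategy is right.

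Two small corrections. First, drop the appeals to \corref{enveloping corner}: that corollary is about $pAp$ with $A=D\rtimes G$, not $pCp$ with $C=D\rtimes H$, and it sits in \secref{universal-sec} under \hypref{regular} and condition~\eqref{effective}, neither of which is assumed here (\secref{gnr} explicitly dispenses with \eqref{effective}). You don't need it anyway: uniqueness is just \lemref{SP in pCp}, and the passage to $\wilde\Pi$ is the ordinary universal property of $D\rtimes H$ for covariant pairs. Second, your paragraph on why $\Pi$ is multiplicative is more complicated than necessary. The clean reason, and the one the paper uses, is that $\wilde\Pi(p)=\pi(\Chi_M)=\wilde P(e,e)$ equals the projection $TT^*$ onto $X$ (this is what you must arrange, and it comes from relation~\eqref{p e} together with the intertwining $\wilde P(a,m)T=TP(a,m)$ at $a=m=e$). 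Once $\wilde\Pi(p)=TT^*$, the restriction of $\wilde\Pi$ to $pCp$ lands in the corner $TT^*\BB(\wilde X)TT^*$, and $z\mapsto T^*\wilde\Pi(z)T$ is then a genuine $*$-homomorphism, not merely a compression; no separate bookkeeping with range projections is required.
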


In other words, \thmref{pCp} says that $pCp$ is a universal $C^*$-algebra for relations  \eqref{s rep}--\eqref{p consistent}.

The proof will go as follows:
we can put the $C^*$-algebra $B$ on Hilbert space, so that we have
isometries $S_a$ and projections $P(a,m)$ on a Hilbert space $X$
satisfying \eqref{s rep}--\eqref{p consistent}.
We shall construct a Hilbert space $\wilde X$ together with 
unitary operators 
$\wilde S_h$ for $h\in H$ and  
projections $\wilde P(h,n)$ for $h\in H$ and $n\in N$ such that
$\lambda: \Chi_{nhMh\inv}\mapsto \wilde P(h,n)$ is a  representation of 
$D_0$ (defined in \secref{universal-sec})
with $(\lambda, \wilde S)$ covariant.

In fact, we use Douglas' construction \cite{RGD} of the Hilbert space $\wilde X$, which only depends upon the isometries $S_a$, and Douglas' dilation theorem \cite[Theorem~1]{RGD} shows {that we} get a homomorphism $\wilde S$ from $H^+$ into the unitary group of $\wilde X$ and an isometric embedding $T:X\to \wilde X$ intertwining $S$ and $\wilde S$:
\[
\wilde S_aT=TS_a\midtext{for}a\in H^+;
\]
we will extend $\wilde S$ in the obvious way to a unitary representation of $H$.
The embedding $T$ will also intertwine $P$ and $\wilde P$:
\[
\wilde P(a,m)T=TP(a,m)\midtext{for}a\in H^+,m\in M.
\]
$(\lambda, \wilde S)$ extends to a representation $\wilde\Pi$ of $C$ and we get the desired representation $\Pi$ of $pCp$  by
$\Pi(z)=T^*\wilde\Pi(z)T$.

\begin{thm}\label{pAp}
Assuming \hypref{abelian hyp}, 
for every SU-family $\{S_a,U(m):a\in H^+,m\in M\}$ in a unital $C^*$-algebra $B$
there is a unique homomorphism $\Pi:pAp\to B$ such that
\begin{align*}
\Pi(s_a)&=S_a&&\text{for all $a\in H^+$}\\
\Pi(u(m))&=U(m)&&\text{for all $m\in M$.}
\end{align*}
\end{thm}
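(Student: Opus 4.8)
The plan is to reduce the statement for $pAp$ to the already-proved Theorem~\ref{pCp} for $pCp$, by producing an SP-family out of any given SU-family. So suppose $\{S_a,U(m):a\in H^+,m\in M\}$ is an SU-family in a unital $C^*$-algebra $B$. First I would form the candidate projections $P(a,m):=U(m)S_aS_a^*U(m)^*$ for $a\in H^+$ and $m\in M$; these are projections because $S_aS_a^*$ is one (since $S_a$ is an isometry) and $U(m)$ is unitary, exactly mirroring the computation $p(a,m)=u(m)s_as_a^*u(m)^*=\Chi_{maMa\inv}$ from Lemma~\ref{SU in pAp}. Then I would verify the four SP-relations \eqref{s rep}--\eqref{p consistent} for the pair $\{S_a,P(a,m)\}$: relation~\eqref{s rep} is part of the SU-family hypothesis; \eqref{p e} reads $P(e,e)=U(e)S_eS_e^*U(e)^*=1$, which follows from $S_e=1$ (a consequence of \eqref{s rep}) and $U(e)=1$ (a consequence of \eqref{u rep}); \eqref{s p} follows from the commutation relation \eqref{s u} $S_aU(m)=U(ama\inv)S_a$ together with \eqref{u rep}; and the consistency relation \eqref{p consistent} is exactly the orthogonality-plus-completeness relation \eqref{u consistent} conjugated into position, using \eqref{s u} and \eqref{u rep} to move the $U$'s past the $S$'s.

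Once $\{S_a,P(a,m)\}$ is known to be an SP-family, Theorem~\ref{pCp} gives a unique homomorphism $\Phi:pCp\to B$ with $\Phi(s_a)=S_a$ and $\Phi(p(a,m))=U(m)S_aS_a^*U(m)^*$. To finish I need a homomorphism $pAp\to pCp$ carrying the generators appropriately. For this I would apply Theorem~\ref{pAp}'s own universal mechanism in reverse — or rather, I would exhibit an SU-family inside $pCp$ itself and note that the universal property being proved forces the homomorphism to exist; but to avoid circularity the cleanest route is: work directly with the concrete corner $pAp\subseteq A=D\rtimes G$, recall from Lemma~\ref{SU in pAp} that $pAp$ is generated by the SU-family $\{s_a=pap,\ u(m)=pmp\}$, and build the homomorphism $\Pi$ by the same dilation argument sketched for Theorem~\ref{pCp}: put $B$ on a Hilbert space $X$ so that the given SU-family acts there, use Douglas' construction \cite{RGD} applied to the isometries $\{S_a\}$ to dilate to a Hilbert space $\wilde X$ with a unitary representation $\wilde S$ of $H$ and an isometric embedding $T:X\to\wilde X$ intertwining $S$ and $\wilde S$, transport $U$ to a unitary representation of $M$ on $\wilde X$ compatibly (using \hypref{abelian hyp} to extend the projections $U(m)S_aS_a^*U(m)^*$ consistently to a PNH-family and hence, by \eqref{h orthogonal}--\eqref{pnh consistent}, to a representation $\lambda$ of $D_0$ on $\wilde X$), check that $(\lambda,\wilde S)$ together with the unitaries $\wilde U(m)$ assemble into a covariant representation of $(D,G)$, integrate to a representation $\wilde\Pi$ of $A=D\rtimes G$ by Corollary~\ref{algebraic} and Corollary~\ref{enveloping crossed product}, and set $\Pi(z)=T^*\wilde\Pi(z)T$ for $z\in pAp$. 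Uniqueness of $\Pi$ is immediate since $pAp$ is generated as a $C^*$-algebra by $\{s_a,u(m)\}$ (Lemma~\ref{SU in pAp}).

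The main obstacle I expect is the same one as in Theorem~\ref{pCp}: verifying that the dilated objects $\{\lambda,\wilde S,\wilde U\}$ actually form a covariant representation of the full system $(D,G)$ — in particular that the relation \eqref{u consistent} (respectively the PNH-consistency \eqref{pnh consistent}) is preserved under Douglas' dilation so that $\lambda$ is genuinely multiplicative on all of $D_0$ rather than merely on the subalgebras $D_U$, and that the unitarity of $\wilde S_h$ for $h\in H\setminus H^+$ is compatible with the covariance relation $\wilde S_h\lambda(f)\wilde S_h^*=\lambda(\alpha_h(f))$. Here \hypref{abelian hyp} does the essential work, exactly as in the $pCp$ case: it guarantees that $N=\bigcup_{a\in H^+}a\inv Ma$, so that every characteristic function $\Chi_{ncMc\inv}$ in a spanning set for $D$ can be reached from the projections $U(m)S_aS_a^*U(m)^*$ after dilating far enough up the directed set $H^+$. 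Since the paper explicitly says ``the same calculations as in that proof'' handle the analogous step, I would present this part briefly, emphasizing only the two new points — that $U(m)S_aS_a^*U(m)^*$ really is the right formula for $P(a,m)$, and that the unitaries $\wilde U(m)$ survive the dilation — and refer to the proof of Theorem~\ref{pCp} for the rest.
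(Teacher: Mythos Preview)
Your overall strategy matches the paper's: convert the SU-family to an SP-family via $P(a,m)=U(m)S_aS_a^*U(m)^*$ (this is \lemref{SU to SP}), invoke the dilation machinery from the proof of \thmref{pCp} to get $(\pi,\wilde S)$ on $\wilde X$, then build $\wilde U$ on $\wilde X$, assemble a covariant representation of $(D,G)$, and compress by $T$. So the architecture is right.

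There are, however, two genuine soft spots. First, the paragraph where you look for a homomorphism $pAp\to pCp$ is a dead end: $pCp$ sits inside $pAp$ as a subalgebra, not as a quotient, and the homomorphism $\Phi:pCp\to B$ from \thmref{pCp} is not what produces $\Pi$ --- rather, you re-run the \emph{proof} of \thmref{pCp} to manufacture $\wilde X$, $T$, $\wilde S$, and the PNH-family, and then add new ingredients. You eventually say this, but the detour should be excised. Second, and more substantively, you are vague precisely where the paper does real work: the construction of $\wilde U$ on $\wilde X$. It is not enough to say ``transport $U$ compatibly''; one must \emph{define} $\wilde U(m)$ on each subspace $\wilde X_a=\wilde S_a^*T(X)$ by
\[
\wilde U(m)\,\wilde S_a^*T\xi \;=\; \wilde S_a^*T\,U(ama^{-1})\xi,
\]
check consistency across the directed family $\{\wilde X_a\}$, and verify $\wilde U(m)T=TU(m)$ and $\wilde S_a\wilde U(m)\wilde S_a^*=\wilde U(ama^{-1})$. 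You also omit that $\wilde U$ must then be extended from $M$ to all of $N$ (since $G=N\rtimes H$, not $M\rtimes H$); the paper does this via \lemref{M to N}, using $N=\bigcup_{a\in H^+}a^{-1}Ma$. Finally, your parenthetical conflates two separate constructions: the PNH-family yields the representation $\pi$ of $D$, while $\wilde U$ is built independently; covariance of $(\pi,\wilde S\wilde U)$ for $(D,G)$ then reduces to checking $\wilde U(k)\wilde P(a,n)\wilde U(k)^*=\wilde P(a,kn)$, which follows from the explicit formulas.
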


In other words, \thmref{pAp} says that $pAp$ is a universal $C^*$-algebra for relations~\eqref{s rep}
and \eqref{s u}--\eqref{u rep}.

For this theorem we first appeal to \thmref{pCp},
after applying \lemref{SU to SP} to get an SP-family,
to get $\wilde S$ and $\wilde P$ as before, and then
we will construct unitary operators 
$\wilde U(n)$ for $n\in N$ such that $(h,n)\mapsto \wilde S_h\wilde U(n)$ is a unitary representation of $G$.

Here the same isometry $T: X\mapsto \wilde X$ also intertwines $U$ and $\wilde U$:
\[
\wilde U(m)T=U(m)T\midtext{for}m\in M.
\]
$\wilde S\wilde U$ will determine a representation $\wilde\Pi$ of $A$, and again we get the desired representation of $pAp$  by
$\Pi(z)=T^*\wilde\Pi(z)T$.

\subsection*{Consequences of the relations}

Our proofs of Theorems~\ref{pCp}--\ref{pAp} will use techniques that fall into two types: consequences of the relations, and dilation techniques.
Here we separate out the techniques that do not involve dilation.

\begin{lem}\label{SU to SP}
Every SU-family $\{S_a,U(m):a\in H^+,m\in M\}$ gives rise to an SP-family $\{S_a,P(a,m):a\in H^+,m\in M\}$ via
\[
P(a,m)=U(m)S_aS_a^*U(m)^*.
\]
\end{lem}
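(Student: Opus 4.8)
The plan is to verify directly that the family $\{S_a, P(a,m)\}$ defined by $P(a,m)=U(m)S_aS_a^*U(m)^*$ satisfies the four SP-family axioms \eqref{s rep}--\eqref{p consistent}, using only the SU-family axioms \eqref{s rep}, \eqref{s u}--\eqref{u rep} and the basic facts that the $S_a$ are isometries and the $U(m)$ unitaries. Axiom \eqref{s rep} for the $S_a$ is simply inherited. For \eqref{p e}: $P(e,e)=U(e)S_eS_e^*U(e)^*$, and $U(e)=1$ by \eqref{u rep} (applied with $m=k=e$), while $S_e$ is an isometry with $S_eS_e=S_e$ by \eqref{s rep}, hence $S_e$ is a projection equal to its own square and also satisfies $S_e^*S_e=1$, forcing $S_e=1$; thus $P(e,e)=1$.

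Next I would check that each $P(a,m)$ really is a projection: it is visibly self-adjoint, and $P(a,m)^2=U(m)S_aS_a^*U(m)^*U(m)S_aS_a^*U(m)^*=U(m)S_a(S_a^*S_a)S_a^*U(m)^*=U(m)S_aS_a^*U(m)^*$ since $S_a^*S_a=1$. For the covariance relation \eqref{s p}, I would compute
\[
S_aP(b,m)S_a^* = S_aU(m)S_bS_b^*U(m)^*S_a^*,
\]
then push $S_a$ past $U(m)$ using \eqref{s u} in the form $S_aU(m)=U(ama\inv)S_a$ (and its adjoint $U(m)^*S_a^*=S_a^*U(ama\inv)^*$), giving $U(ama\inv)S_aS_bS_b^*S_a^*U(ama\inv)^*$; then $S_aS_b=S_{ab}$ by \eqref{s rep} collapses this to $U(ama\inv)S_{ab}S_{ab}^*U(ama\inv)^*=P(ab,ama\inv)$, as required.

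The main obstacle is the consistency relation \eqref{p consistent}, which is the only axiom involving a sum over the finite quotient $M/bMb\inv$; I expect the referee-proof version of that identity to be the real content. The key tool is \eqref{u consistent}, $\sum_{maMa\inv\in M/aMa\inv}U(m)S_aS_a^*U(m\inv)=1$. To prove \eqref{p consistent} I would start from $P(a,k)=U(k)S_aS_a^*U(k)^*$, insert the resolution of the identity \eqref{u consistent} for the index $b$ between $S_a$ and $S_a^*$ in the form $S_aS_a^* = S_a\bigl(\sum_m U(m)S_bS_b^*U(m)^*\bigr)S_a^* = S_a S_b S_b^* S_a^*$ after first rewriting $S_aS_a^* = S_a(S_b S_b^*) S_a^*$ — more precisely, one uses \eqref{u consistent} to expand the identity inside $S_aS_a^* = S_a \cdot 1 \cdot S_a^*$ only after recognizing that $S_a S_a^*$ already factors through $S_b$ since $b, a^{-1}(ab)$ and compositions lie in $H^+$; the cleaner route is to expand $S_a S_a^*$ directly, writing $S_a S_a^* = \sum_{m} S_a U(m) S_b S_b^* U(m)^* S_a^*$ using \eqref{u consistent} sandwiched by $S_a,S_a^*$, then apply \eqref{s u} and \eqref{s rep} to each summand to turn $S_a U(m) S_b S_b^* U(m)^* S_a^*$ into $U(ama\inv) S_{ab} S_{ab}^* U(ama\inv)^*$, and finally conjugate the whole sum by $U(k)$ on the left and $U(k)^*$ on the right, using $U(k)U(ama\inv)=U(kama\inv)$ from \eqref{u rep}, to obtain $\sum_m U(kama\inv) S_{ab} S_{ab}^* U(kama\inv)^* = \sum_m P(ab, kama\inv)$, which is exactly the right-hand side of \eqref{p consistent}. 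One must also observe that as $maMa\inv$ ranges over $M/aMa\inv$ the conjugated classes range correctly, and reconcile the index set $M/bMb\inv$ appearing in \eqref{p consistent} with the index set $M/aMa\inv$ appearing in \eqref{u consistent} — this bookkeeping, rather than any deep idea, is where care is needed, so I would state it as a short lemma and leave the remaining verifications as routine.
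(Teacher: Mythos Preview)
Your proposal is correct and follows exactly the approach the paper has in mind: the paper's own proof is a single sentence (``This follows quickly from relations \eqref{s rep}--\eqref{p consistent} and \eqref{s u}--\eqref{u rep}''), and your detailed verification is precisely the routine computation being alluded to. One remark: your closing worry about ``reconciling the index set $M/bMb\inv$ with $M/aMa\inv$'' is a non-issue --- relation \eqref{u consistent} is stated for every element of $H^+$, so applying it with $b$ in place of $a$ gives the index set $M/bMb\inv$ on the nose, and your ``cleaner route'' (insert $1=\sum_m U(m)S_bS_b^*U(m)^*$ between $S_a$ and $S_a^*$, push $S_a$ past $U(m)$ via \eqref{s u}, collapse $S_aS_b=S_{ab}$, then conjugate by $U(k)$) goes through without any further bookkeeping.
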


\begin{proof}
This follows quickly from relations \eqref{s rep}--\eqref{p consistent} and \eqref{s u}--\eqref{u rep}.
\end{proof}

\begin{lem}\label{H^+ to H}
Let $B$ be a unital $C^*$-algebra.
Suppose we have unitaries $\{S_a:a\in H^+\}$ in $B$ satisfying relation~\eqref{s rep}.
Then $S$ extends uniquely to a unitary representation of $H$ in $B$.
\end{lem}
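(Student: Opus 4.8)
The statement to prove is Lemma~\ref{H^+ to H}: given unitaries $\{S_a : a\in H^+\}$ in a unital $C^*$-algebra $B$ satisfying $S_aS_b = S_{ab}$, the family $S$ extends uniquely to a unitary representation of all of $H$. The plan is to exploit \hypref{abelian hyp}: $H$ is abelian, and $H$ is directed by $a\le b \iff b\in aH^+$, with $H = H^+(H^+)\inv$ (as spelled out just before \obsref{...}). So every $h\in H$ can be written $h = ab\inv$ with $a,b\in H^+$, and the natural guess is $S_h := S_a S_b^*$ (note $S_b^* = S_b\inv$ since $S_b$ is unitary). The main work is to check this is well defined, multiplicative, and agrees with the original $S$ on $H^+$; uniqueness is then automatic since $H = H^+(H^+)\inv$ forces any extension to satisfy $S_{ab\inv} = S_aS_b\inv$.

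\emph{Well-definedness.} Suppose $ab\inv = cd\inv$ with $a,b,c,d\in H^+$. Since $H^+$ is directed and $H$ is abelian, $ad = cb$ in $H^+$, so $S_aS_d = S_{ad} = S_{cb} = S_cS_d$\ldots more carefully $S_aS_d = S_cS_b$. Because all the $S$'s are unitaries and $H$ is abelian (so $S_aS_d = S_dS_a$, etc., as these are all images of commuting elements once we know $S$ is a homomorphism on $H^+$), we can rearrange: from $S_aS_d = S_cS_b$ and unitarity of $S_b,S_d$ we get $S_aS_b^* = S_c S_d^*$ after multiplying on the right by $S_d^* $ and on the left by \ldots the cleanest route is: $S_a S_d = S_c S_b \implies S_a S_d S_b^* = S_c S_b S_b^* = S_c \implies S_a S_b^* S_d = S_c$ (using commutativity: $S_dS_b^* = S_b^*S_d$, valid because $S_bS_d = S_{bd} = S_{db} = S_dS_b$ and both are unitary) $\implies S_aS_b^* = S_c S_d^*$. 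Hence $S_h$ is well defined.

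\emph{Homomorphism and compatibility.} For $h = ab\inv$ and $k = cd\inv$ in $H$, write $hk = (ac)(bd)\inv$ with $ac, bd\in H^+$, and compute $S_hS_k = S_aS_b^*S_cS_d^* = S_aS_c S_b^* S_d^* = S_{ac}(S_{bd})^* = S_{hk}$, where the middle step again uses that the unitaries $\{S_a\}$ pairwise commute (they form a homomorphic image of the abelian semigroup $H^+$, so $S_bS_c = S_{bc} = S_{cb} = S_cS_b$, and likewise $S_b^*S_c = (S_c^*S_b)^* = (S_bS_c^*)^* = S_cS_b^*$ etc.). Taking $h\in H^+$ and writing $h = he\inv$ gives $S_h = S_hS_e^* = S_h$ since $S_e$ is the identity (from $S_eS_e = S_{ee} = S_e$ and $S_e$ unitary, so $S_e = 1$), so the extension restricts to the original $S$ on $H^+$. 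Each $S_h = S_aS_b^*$ is a product of unitaries, hence unitary, so we have a unitary representation of $H$.

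\emph{Main obstacle.} The only real subtlety is bookkeeping around the fact that the preorder $a\le b\iff b\in aH^+$ need not be a partial order, so the representation $ab\inv = cd\inv$ is far from unique; well-definedness must be argued purely from the multiplicative relation $S_aS_b = S_{ab}$ together with unitarity and commutativity of $H$, as above. Everything else is formal, and the proof will be short.
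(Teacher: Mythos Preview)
Your proof is correct and follows essentially the same approach as the paper: define $S_h = S_aS_b^*$ for $h = ab\inv$, verify well-definedness using $ad = cb$ together with the fact that the unitaries $S_a$ pairwise commute (since $H$ is abelian), and conclude that $S$ is a homomorphism. The paper's well-definedness computation inserts $S_dS_d^*$ rather than rearranging via commutativity as you do, and it leaves the homomorphism check, $S_e = 1$, and uniqueness as implicit, but the substance is identical.
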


\begin{proof}
If $h=ab\inv$ with $a,b\in H^+$, define
\[
S_h=S_aS_b^*.
\]
If $h=ab\inv=cd\inv$ with $a,b,c,d\in H^+$, then $ad=da=cb$, so
\begin{align*}
S_aS_b^*
&=S_aS_dS_d^*S_b^*
=S_{ad}S_{bd}^*
=S_{cb}S_{bd}^*
=S_cS_bS_b^*S_d^*
=S_cS_d^*,
\end{align*}
thus $S_h$ is well-defined. It is of course unitary, and so,
because $S_a$ and $S_b$ are commuting unitaries,
$S_a$ and $S_b^*$ commute for $a,b\in H^+$, whence $S$ is a homomorphism.
\end{proof}

\begin{lem}\label{SP to PN}
Let $B$ be a unital $C^*$-algebra.
Suppose we have unitaries $\{S_a:a\in H^+\}$ and projections $\{P(a,m):a\in H^+,m\in M\}$ in $B$ satisfying relations \eqref{s rep}, \eqref{s p}, and \eqref{p consistent}.
Then $P$ extends uniquely to a PN-family $\{P(a,n):a\in H^+,n\in N\}$ such that
\begin{equation}\label{define PN}
P(a,c\inv mc)=S_c^*P(ca,m)S_c\midtext{for}c\in H^+.
\end{equation}
Moreover, this extended family satisfies the analogue of \eqref{s p}:
\begin{equation}\label{s pn}
S_aP(b,n)S_a^*=P(ab,ana\inv)\midtext{for}a,b\in H^+,n\in N.
\end{equation}
\end{lem}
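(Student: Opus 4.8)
The plan is to check that the formula \eqref{define PN} gives a well-defined projection, that the resulting family is a PN-family, and finally that \eqref{s pn} holds; everything reduces to bookkeeping with \hypref{abelian hyp} and relations \eqref{s rep}, \eqref{s p}, \eqref{p consistent}. First I would address well-definedness. By \hypref{abelian hyp}, every $n\in N$ can be written as $n=c\inv mc$ with $c\in H^+$ and $m\in M$ (using that $H$ is abelian, this is just $c\inv Mc\ni n$); so the proposed formula covers all of $N$. Given two representations $n=c\inv mc=d\inv kd$ with $c,d\in H^+$, $m,k\in M$, I choose (by directedness of $H$, which follows from the discussion before \obsref{} — pick $e\in H^+$ with $e\in cH^+\cap dH^+$) a common upper bound, reducing to the case where one of $c,d$ divides the other. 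When $d=ec$ with $e\in H^+$, I have $k=ece\inv\cdot c e\inv\cdots$ — more precisely $k=ema e\inv$-type relation; applying \eqref{s p} (with the group elements commuting, since $H$ is abelian, so conjugation by $a$ in \eqref{s p} simplifies) together with \eqref{p consistent} shows $S_d^*P(da,k)S_d=S_c^*S_e^*P(eca,k)S_eS_c=S_c^*P(ca,m)S_c$, using that $S_e^*P(ec a,ema e\inv)S_e$ collapses via \eqref{s p} read backwards. Each $P(a,n)$ so defined is a projection because it is $S_c^*(\text{projection})S_c$ with $S_c$ an isometry and the cut-down landing back inside the range of $S_cS_c^*$ — this last point needs a small check that $P(ca,m)\le S_cS_c^*$, which follows from \eqref{s p} applied with $b=e$ in the appropriate direction, or directly from \eqref{p consistent}.

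Next I would verify the two PN-family relations. For the consistency relation \eqref{pn consistent}: given $a\in H^+$, $n\in N$, and $b\in H^+$, write $n=c\inv mc$ with $c\in H^+$; then $P(a,n)=S_c^*P(ca,m)S_c$, apply \eqref{p consistent} to $P(ca,m)$ with the parameter $b$ (adjusting so the conjugating element is $c a$), and push the $S_c,S_c^*$ back through the sum using \eqref{s p}, landing on $\sum P(ab,nama\inv)$ after re-expressing each summand via \eqref{define PN}. For orthogonality \eqref{orthogonal}: if $naMa\inv\ne kaMa\inv$, choose (filter base) a common refinement and a single $c\in H^+$ with both $n,k\in c\inv Mc$ and $c\inv Mc$ small enough that $naMa\inv, kaMa\inv$ remain distinct cosets of $c\inv(\text{something})c$ after conjugation; then $P(a,n)P(a,k)=S_c^*P(ca,\cdot)P(ca,\cdot)S_c$ and I invoke the orthogonality already built into an SP-family via \eqref{p consistent} (distinct projections in the decomposition of $P(e,e)=1$ are orthogonal), giving $0$.

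Finally, \eqref{s pn}: for $a,b\in H^+$ and $n\in N$, write $n=c\inv mc$, so $P(b,n)=S_c^*P(cb,m)S_c$; then $S_aP(b,n)S_a^* = S_aS_c^*P(cb,m)S_cS_a^*$. Since $H$ is abelian the $S$'s all commute (they form a representation of $H^+$ into a commutative — well, into unitaries indexed by an abelian semigroup, and by \lemref{H^+ to H} extend to a representation of $H$), so $S_aS_c^*=S_c^*S_a$ when $a,c\in H^+$ — more carefully, $S_cS_a^*=S_a^*S_c$ requires $S_a,S_c$ to commute, which holds because in an SP-family coming from this setup the isometries commute; if that is not automatic I would instead compute directly, inserting $S_cS_c^*$ factors and using \eqref{s p} to move the conjugation onto the second argument. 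The cleanest route: $S_aP(b,c\inv mc)S_a^* = P(ab, a c\inv m c a\inv)$ should match $P(ab, a n a\inv)$ by definition \eqref{define PN} applied with conjugating element $c$ (legitimate since $a,c\in H^+$ and $H$ abelian gives $ac\inv m ca\inv = c\inv(ama\inv)c$), and \eqref{s p} handles the intermediate step $S_a S_c^* P(cb,m) S_c S_a^* = S_c^* S_a P(cb,m) S_a^* S_c = S_c^* P(acb, ama\inv) S_c$.

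\textbf{Main obstacle.} The delicate point is well-definedness of $P(a,n)$ in \eqref{define PN} — reconciling the many ways to write $n=c\inv mc$ — together with the bookkeeping needed to push $S_c, S_c^*$ through the finite sums in \eqref{p consistent} and \eqref{pn consistent} while keeping track of which coset each projection is supported on. Commutativity of $H$ is what makes all the conjugations line up; without it the statement would fail, which is presumably why \hypref{abelian hyp} is in force. Everything else is routine manipulation of the SP-family axioms.
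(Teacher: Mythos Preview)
Your proposal is correct and follows essentially the same route as the paper: define $P(a,c^{-1}mc)$ by \eqref{define PN}, check consistency over the directed family $\{c^{-1}Mc:c\in H^+\}$, then verify \eqref{pn consistent}, \eqref{orthogonal}, and \eqref{s pn} by conjugating back into $M$ and invoking \eqref{s p} and \eqref{p consistent}. One small point: the hypothesis of the lemma is that the $S_a$ are \emph{unitaries}, not merely isometries, so $S_c^*(\text{projection})S_c$ is automatically a projection and your side-check that $P(ca,m)\le S_cS_c^*$ is unnecessary.
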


\begin{proof}
Since $N$ is the union of the upward-directed family $\{c\inv Mc:c\in H^+\}$ (with $c\inv Mc\subset d\inv c\inv Mdc$), to construct a map from $H^+\times N$ to the projections in $B$ it suffices to find, for each $c\in H^+$, a map
$P^c$ from $H^+\times c\inv Mc$ to the projections, such that
\[
P^{dc}|_{H^+\times c\inv Mc}=P^c.
\]
For $a\in H^+$ and $m\in M$ we define $P^c(a,c\inv mc)$ by the right-hand side of \eqref{define PN}.
This is well-defined, because $m\mapsto c\inv mc$ is a bijection of $M$ onto $c\inv Mc$,
and we have
\begin{align*}
P^c(a,c\inv mc)
&=S_c^*P(ca,m)S_c
\\&=S_{dc}^*S_dP(ca,m)S_c^*S_{dc}
\\&=S_{dc}^*P(dca,dmd\inv)S_{dc}
\\&=P^{dc}(a,c\inv mc).
\end{align*}
In particular, $P^c|_{H^+\times M}=P$.
Thus we have defined a family $\{P(a,n):a\in H^+,n\in N\}$ of projections extending the given family $\{P(a,m):a\in H^+,m\in M\}$ and satisfying \eqref{define PN}, and moreover this relation gives the uniqueness.
	
We parlay the relation \eqref{p consistent} into the relation \eqref{pn consistent} for 
these projections:
if $n=c\inv kc$ for $c\in H^+$ and $k\in M$ then
\begin{align*}
P(a,c\inv kc)
&=S_c^*P(ca,k)S_c
\\&=S_c^*\sum_{mbMb\inv\in M/bMb\inv}P(cab,kcama\inv c\inv)S_c
\\&=\sum_{mbMb\inv\in M/bMb\inv}P(ab,c\inv kcama\inv).
\end{align*}

For \eqref{orthogonal}, fix $a\in H^+$, and let $n,k\in N$ with $naMa\inv\ne kaMa\inv$.
We must show that the projections $P(a,n)$ and $P(a,k)$ are orthogonal.
Choose $c$ in $H^+$ such that $cnc\inv,ckc\inv\in M$.
It suffices to show that
\[
S_cP(a,n)S_c^*=P(ca,cnc\inv)
\quad\perp\quad
S_cP(a,k)S_c^*=P(ca,ckc\inv).
\]
Put $b=ca$. Since
\[
cnc\inv bMb\inv=cnaMa\inv c\inv\ne ckaMa\inv c\inv=ckc\inv bMb\inv,
\]
the elements $cnc\inv$ and $ckc\inv$ of $M$ are in distinct cosets of $bMb\inv$,
so $P(b,cnc\inv)$ and $P(b,ckc\inv)$ are distinct terms in the expansion
\[
P(e,e)=\sum_{mbMb\inv\in M/bMb\inv}P(e,m),
\]
and are therefore orthogonal, as desired.

Finally, for \eqref{s pn} we have
\begin{align*}
S_bP(a,c\inv mc)S_b^*
&=S_bS_c^*P(ca,m)S_cS_b^*
\\&=S_c^*P(bca,bmb\inv)S_c
\\&=P(ba,bc\inv mcb\inv).
\qedhere
\end{align*}
\end{proof}

\begin{lem}\label{PN to PNH}
Every PN-family can be uniquely extended to a PNH-family.
\end{lem}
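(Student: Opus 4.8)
The plan is to write the extension down explicitly and then verify that it is a PNH-family and that it is forced, the only real work being well-definedness of the formula. I will use repeatedly that $H^+$ is a subsemigroup of $H$, that $H$ is directed by $a\le b\iff b\in aH^+$, and that every $h\in H$ has the form $h=ac\inv$ with $a,c\in H^+$ (so $hc=a\in H^+$); I will also use the short consequence of \eqref{pn consistent} that $P(a,n)$ depends only on the coset $naMa\inv$, and the elementary fact that if $K\le L\le G$ are subgroups of finite index, then the products of a (left) transversal of $L$ in $G$ with a (left) transversal of $K$ in $L$ form a transversal of $K$ in $G$.

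\emph{Construction.} Given $h\in H$, I would pick a decomposition $h=ac\inv$ with $a,c\in H^+$ and define
\[
P(h,n):=\sum_{mcMc\inv\in M/cMc\inv}P(a,\,nhmh\inv)\qquad(n\in N),
\]
where $hmh\inv=ac\inv mca\inv$. The sum is finite by \eqref{finite}, and since $h(cMc\inv)h\inv=aMa\inv$, distinct cosets $mcMc\inv$ give elements $nhmh\inv$ lying in distinct cosets of $aMa\inv$; so by \eqref{orthogonal} the summands are orthogonal projections and $P(h,n)$ is a projection. For $h\in H^+$ the trivial decomposition $h=h\cdot e\inv$ returns the original $P(h,n)$, so (once well-definedness is in hand) this does extend the given family.

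\emph{Well-definedness.} Using directedness, two decompositions $h=ac\inv=a'c'\inv$ refine to a common one: choosing $d\in cH^+\cap c'H^+$ and writing $d=ce=c'e'$ with $e,e'\in H^+$ gives $h=(ae)d\inv=(a'e')d\inv$ with $ae,a'e',d\in H^+$ (in fact $ae=a'e'=hd$). So it suffices to check that the formula for $h=ac\inv$ coincides with that for $h=(ae)(ce)\inv$, $e\in H^+$. Expanding each term $P(a,nhmh\inv)$ of the former by \eqref{pn consistent} with exponent $e$ produces a double sum, and the transversal-tower fact applied to the chain $(ce)M(ce)\inv\subseteq cMc\inv\subseteq M$ — equivalently, after conjugating by $c\inv$, to $eMe\inv\subseteq M\subseteq c\inv Mc$ — collapses it exactly to the formula for $h=(ae)(ce)\inv$. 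This is the only genuinely delicate point: since $N$ need not be abelian (only $H$ is, by \hypref{abelian hyp}), one must work carefully with left cosets of the nested conjugates $\cdots\subseteq cMc\inv\subseteq M\subseteq c\inv Mc\subseteq\cdots$ and resist rearranging products; the transversal-tower fact is exactly what makes the bookkeeping go through.

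\emph{PNH-axioms and uniqueness.} For \eqref{h orthogonal}, expand $P(h,n)P(h,k)$ into a double sum of products $P(a,nhmh\inv)P(a,khm'h\inv)$; by \eqref{orthogonal} a term survives only if $nhmh\inv$ and $khm'h\inv$ lie in one coset of $aMa\inv=h(cMc\inv)h\inv$, and conjugating by $h\inv$ and using that $M$ is a subgroup forces $nhMh\inv=khMh\inv$. For \eqref{pnh consistent} with $b\in H^+$, write $hb=(ab)c\inv$ (legitimate since $H$ is abelian and $ab\in H^+$) and expand each $P(hb,nhm'h\inv)$ on the right by the construction; the resulting double sum is collapsed by the transversal-tower fact applied to $(bc)M(bc)\inv\subseteq bMb\inv\subseteq M$ to $\sum_{x\in M/(bc)M(bc)\inv}P(ab,nhxh\inv)$, which is $P(h,n)$ read off from the decomposition $h=(ab)(bc)\inv$. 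Finally, uniqueness is immediate: any PNH-family restricting to the given PN-family must satisfy \eqref{pnh consistent} with $h=ac\inv$ and $b=c$, which — as $hc=a\in H^+$, so only the original family appears — says $P(h,n)=\sum_{mcMc\inv\in M/cMc\inv}P(a,nhmh\inv)$, precisely the formula above. The main obstacle, as noted, is the well-definedness step; everything after it is routine bookkeeping with the same transversal-tower fact.
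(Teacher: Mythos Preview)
Your proof is correct and follows essentially the same route as the paper: define $P(h,n)$ by the formula forced by \eqref{pnh consistent} (the paper phrases this as ``choose $b\in H^+$ with $hb\in H^+$'', which is exactly your $h=ac\inv$ with $c=b$), establish well-definedness by refining two choices to a common one via directedness and the transversal-tower identity, and verify the PNH-axioms by the same double-sum bookkeeping. Your write-up is, if anything, more explicit about the non-abelian-$N$ coset bookkeeping than the paper's, but the argument is the same.
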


\begin{proof}
Let $h\in H$ and $n\in N$. We want to define $P(h,n)$ as follows: choose $b\in H^+$ such that $hb\in H^+$, and put
\begin{equation}\label{define Ph}
P(h,n)=\sum_{mbMb\inv\in M/bMb\inv}P(hb,nhmh\inv).
\end{equation}
We must show that this is well-defined, and since $H$ is directed by $h\le k$ if and only if $k\in hH^+$ it suffices to show that if $c\in H^+$ then
\[
\sum_{mbMb\inv\in M/bMb\inv}P(hb,nhmh\inv)
=\sum_{\substack{kbcMc\inv b\inv\\\in M/bcMc\inv b\inv}}P(hbc,nhkh\inv).
\]
For each $m\in M$, by relation \eqref{pn consistent} we have
\[
P(hb,nhmh\inv)
=\sum_{lcMc\inv\in M/cMc\inv}P(hbc,nhmblb\inv h\inv),
\]
and so we do have
\begin{align*}
&\sum_{mbMb\inv\in M/bMb\inv}P(hb,nhmh\inv)
\\&\quad=\sum_{\substack{mbMb\inv\\\in M/bMb\inv}}
\sum_{\substack{lcMc\inv\\\in M/cMc\inv}}
P(hbc,nhmblb\inv h\inv),
\end{align*}
which suffices because
as $m$ and $l$ run through complete sets of representatives of cosets of $bMb\inv$ and $cMc\inv$, respectively, the products $mblb\inv$ run through a complete set of representatives of cosets of $bcMc\inv b\inv$.

Thus we have defined projections $\{P(h,n):h\in H,n\in N\}$, and we turn to the relations:
for \eqref{h orthogonal}, let $h\in H$ and $n,k\in N$, and assume that $nhMh\inv\ne khMh\inv$.
Then, since $bMb\inv\subset M$, for all $m,l\in M$ we have
\[
nhmbMb\inv h\inv \ne khlbMb\inv h\inv,
\]
and therefore
\[
P(hb,nhmh\inv)\quad\perp\quad P(hb,khlh\inv).
\]
Summing over $m$ and $l$ (using \eqref{define Ph}, we get
\[
P(h,n)\quad\perp\quad P(h,k).
\]

For the relation \eqref{pnh consistent}, we must now show that for $h\in H$, $n\in N$, and arbitrary $b\in H^+$ we have
\begin{equation}\label{C H}
P(h,n)=\sum_{mbMb\inv}P(hb,nhmh\inv).
\end{equation}
This of course looks very much like the definition of the left-hand side, except that we are not assuming that $hb\in H^+$.
Nevertheless, \eqref{C H} can be proved using the same strategy as our proof that $P(h,n)$ is well-defined: choose $c\in H^+$ such that $hbc\in H^+$, and compute that
\begin{align*}
&\sum_{\substack{mbMb\inv\\\in M/bMb\inv}}\sum_{\substack{lcMc\inv\\\in M/cMc\inv}}
P(hbc,nhmblb\inv h\inv)
\\&\quad=\sum_{\substack{kbcMc\inv b\inv\\\in M/bcMc\inv b\inv}}P(hbc,nhkh\inv)
\end{align*}
as before.

The uniqueness of the family $\{P(h,n):h\in H,n\in N\}$ follows from relations \eqref{h orthogonal}--\eqref{pnh consistent}.
\end{proof}

Recall from \lemref{PN in D} the PN-family $\{p(a,n):a\in H^+,n\in N\}$ in $D$. We now show that $D$ is a universal $C^*$-algebra for relations  \eqref{orthogonal}--\eqref{pn consistent}:

\begin{prop}\label{D}
Assuming \hypref{abelian hyp}, 
for every PN-family $\{P(a,n):a\in H^+,n\in N\}$ in a $C^*$-algebra $B$
there is a unique homomorphism $\pi:D\to B$ such that
\begin{align*}
\pi(p(a,n))&=P(a,n)&&\text{for all }a\in H^+,n\in N.
\end{align*}
\end{prop}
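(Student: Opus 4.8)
The plan is to construct $\pi$ first as a $*$-homomorphism of the algebraic object $D_0=\spn\PP$ and then invoke \lemref{enveloping D}, which identifies $D$ with the enveloping $C^*$-algebra of $D_0$. As a preliminary observation: for every $U\in\UU$ there is $a\in H^+$ with $aMa\inv\subseteq U$ (see the discussion at the beginning of this section) and $\UU$ has finite quotients, so every element of $\PP$ is a finite sum of elements of the form $\Chi_{naMa\inv}=p(a,n)$ with $a\in H^+$, $n\in N$. Hence $D_0=\spn\{p(a,n):a\in H^+,n\in N\}$ and likewise $D=\clspn\{p(a,n):a\in H^+,n\in N\}$, the latter giving the uniqueness of $\pi$ for free.

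The first real step is to show that, for the given PN-family, $P(a,n)$ depends only on the coset $naMa\inv$. Writing $n=k\,(am_0a\inv)$ with $m_0\in M$ when $naMa\inv=kaMa\inv$, I would apply \eqref{pn consistent} to $P(a,n)$ to get $P(a,n)=\sum_m P(ab,k\,a(m_0m)a\inv)$ with the sum over a transversal of $bMb\inv$ in $M$ (for a suitable $b\in H^+$), then reindex by $m'=m_0m$, which runs over another transversal because $bMb\inv$ is normal in $N$, and recognize the result as the expansion of $P(a,k)$ furnished by \eqref{pn consistent}. Thus $P(a,n)=P(a,k)$.

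Next, for each $a\in H^+$ set $D_a=\spn\{\Chi_{naMa\inv}:n\in N\}\subseteq D_0$; this is the $*$-algebra of finitely supported functions on the discrete set $N/aMa\inv$, with the $\Chi_{naMa\inv}$ as its orthogonal point masses. By the previous step together with the orthogonality relation \eqref{orthogonal}, the assignment $\Chi_{naMa\inv}\mapsto P(a,n)$ extends to a unique $*$-homomorphism $\pi_a\colon D_a\to B$. For $a,c\in H^+$ with $cMc\inv\subseteq aMa\inv$ one has $b:=a\inv c\in H^+$, $c=ab$, and $naMa\inv$ is the disjoint union of the cosets $(n\,ama\inv)cMc\inv$ as $m$ runs over a transversal of $bMb\inv$ in $M$; comparing $\pi_c(\Chi_{naMa\inv})=\sum_m P(c,n\,ama\inv)$ with \eqref{pn consistent} gives $\pi_c|_{D_a}=\pi_a$. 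Since $\{aMa\inv:a\in H^+\}$ is a cofinal, downward-directed subfamily of $\UU$, the $D_a$ form an upward-directed family with union $D_0$, so the $\pi_a$ glue to a single (automatically multiplicative) $*$-homomorphism $\pi_0\colon D_0\to B$. Finally $\pi_0$ extends through the enveloping $C^*$-algebra to the desired $\pi\colon D\to B$ by \lemref{enveloping D}, with $\pi(p(a,n))=P(a,n)$ by construction.

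I expect the main obstacle to be the second step — that an abstract PN-family automatically has $P(a,n)$ depending only on $naMa\inv$. This is transparent for the concrete family $p(a,n)=\Chi_{naMa\inv}$ in $D$, but for a general family it must be extracted from \eqref{pn consistent}; it is precisely what makes the assignment $p(a,n)\mapsto P(a,n)$ well defined, and everything after it is bookkeeping with finite coset decompositions.
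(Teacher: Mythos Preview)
Your proposal is correct and follows essentially the same route as the paper: build $\pi$ as a directed union of $*$-homomorphisms $\pi_a$ on the subalgebras $D_a=\spn\{p(a,n):n\in N\}$, using \eqref{orthogonal} for multiplicativity and \eqref{pn consistent} for the compatibility $\pi_{ab}|_{D_a}=\pi_a$, and then extend from $D_0$ to $D$ via \lemref{enveloping D}. The only real difference is in the well-definedness step: you fix $a$ and reindex the sum in \eqref{pn consistent} by $m'=m_0m$ (incidentally, this just uses that left multiplication by $m_0$ permutes left cosets---normality of $bMb\inv$ is not needed), whereas the paper handles the more general-looking equality $p(a,n)=p(b,k)$ by observing that $c=a\inv b$ normalizes $M$ and applying \eqref{pn consistent} with $b'=c$ so that the sum collapses to a single term; both arguments extract the same content from \eqref{pn consistent}.
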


\begin{proof}
$\pi$ will be unique if it exists, as the $p(a,n)$ generate $D$.
Since
$D_0$ is the union of the upward-directed family of $*$-subalgebras
\[
D_a:=\spn\{p(a,n):n\in N\}
\]
indexed by $a\in H^+$,
to construct a $*$-homomorphism $\pi:D_0\to B$
it is enough 
by \lemref{enveloping D}
to find $*$-homomorphisms $\pi_a:D_a\to B$ such that
\begin{equation}\label{consistent}
\pi_{ab}|_{D_a}=\pi_a.
\end{equation}
For each $a\in H^+$ we want to define $\pi_a$ such that
\[
\pi_a(p(a,n))=P(a,n)\midtext{for}n\in N.
\]
Once we verify that this is well-defined, it will of course uniquely extend to a linear map $\pi_a:D_a\to B$.
To see that it is well-defined, suppose that we have $a,b\in H^+$ and $n,k\in N$ with $p(a,n)=p(b,k)$, i.e., 
\[
naMa\inv=kbMb\inv.
\]
Then 
$aMa\inv=bMb\inv$
and
$k\inv n\in aMa\inv$.
Put
$c=a\inv b$
and
$m=a\inv n\inv ka$.
Then $c$ normalizes $M$ and $m\in M$, so by the relation \eqref{pn consistent} we have
\[
P(a,n)=P(ac,nama\inv)=P(b,k).
\]

The relation \eqref{orthogonal} immediately implies that 
the linear map $\pi_a$ is
a $*$-homomorphism of $D_a$,
and then the relation \eqref{pn consistent} implies the consistency condition \eqref{consistent}.
\end{proof}

\begin{lem}\label{a to h}
The unique extension, guaranteed by \lemref{PN to PNH}, of the PN-family $p(a,n)=\Chi_{naMa\inv}$ in $D$ to a PNH-family is given by $p(h,n)=\Chi_{nhMh\inv}$.
Also, the unique homomorphism $\pi:D\to B$, guaranteed by \propref{D}, determined by any PN-family in $B$ takes $p(h,n)$ to the unique extension $P(h,n)$.
\end{lem}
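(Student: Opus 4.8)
The plan is to verify both assertions by direct computation, since each is essentially a bookkeeping check against the defining formula~\eqref{define Ph} for the PNH-extension and the consistency relation~\eqref{pn consistent}. First I would handle the concrete PN-family $p(a,n)=\Chi_{naMa\inv}$ in $D$. Fix $h\in H$ and $n\in N$, choose $b\in H^+$ with $hb\in H^+$, and apply~\eqref{define Ph}:
\[
\text{(extension)}(h,n)=\sum_{mbMb\inv\in M/bMb\inv}\Chi_{nhmh\inv\cdot hbMb\inv h\inv}
=\sum_{mbMb\inv\in M/bMb\inv}\Chi_{nhm(hbMb\inv h\inv)}.
\]
The point is that $hbMb\inv h\inv\subseteq hMh\inv$ is a subgroup of finite index, and as $m$ runs over coset representatives of $bMb\inv$ in $M$, the elements $hmh\inv$ run over coset representatives of $hbMb\inv h\inv$ in $hMh\inv$; hence the disjoint union $\bigsqcup_m nhm(hbMb\inv h\inv)$ is exactly the coset $nhMh\inv$. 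Therefore the sum of characteristic functions collapses to $\Chi_{nhMh\inv}$, which is the claimed formula $p(h,n)=\Chi_{nhMh\inv}$. (Here I am using that $\UU$ has finite quotients, condition~\eqref{finite}, together with Hypothesis~\ref{abelian hyp} to guarantee these indices are finite and that the relevant subgroups lie in $\UU$.)

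For the second assertion, let $\{P(a,n):a\in H^+,n\in N\}$ be any PN-family in $B$ and let $\pi:D\to B$ be the homomorphism of \propref{D}, so $\pi(p(a,n))=P(a,n)$ for $a\in H^+$. Let $\{P(h,n):h\in H,n\in N\}$ denote the PNH-extension guaranteed by \lemref{PN to PNH}. I must show $\pi(p(h,n))=P(h,n)$ for all $h\in H$. Using the first part, $p(h,n)=\Chi_{nhMh\inv}$, and choosing $b\in H^+$ with $hb\in H^+$, the computation above (now read in $D$ rather than $B$) gives
\[
p(h,n)=\Chi_{nhMh\inv}=\sum_{mbMb\inv\in M/bMb\inv}\Chi_{nhmh\inv\cdot hbMb\inv h\inv}
=\sum_{mbMb\inv\in M/bMb\inv}p\bigl(hb,nhmh\inv\bigr),
\]
a finite sum of generators indexed by $H^+$. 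Applying $\pi$ and using $\pi(p(hb,\cdot))=P(hb,\cdot)$,
\[
\pi(p(h,n))=\sum_{mbMb\inv\in M/bMb\inv}P\bigl(hb,nhmh\inv\bigr)=P(h,n),
\]
the last equality being precisely the defining formula~\eqref{define Ph} for the PNH-extension. This proves the lemma.

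The only genuinely delicate point—and the one I would write out carefully—is the coset-counting step: that $\{hmh\inv : mbMb\inv\in M/bMb\inv\}$ is a complete, irredundant set of coset representatives for $hbMb\inv h\inv$ in $hMh\inv$, so that the disjoint union of the small cosets $nhm(hbMb\inv h\inv)$ is exactly $nhMh\inv$. This is immediate from the fact that conjugation by $h$ is an automorphism of $N$ carrying $M$ onto $hMh\inv$ and $bMb\inv$ onto $hbMb\inv h\inv$, but it is worth stating explicitly since everything else in the proof is then formal. Uniqueness in both assertions is not an issue: it is already part of the statements of \lemref{PN to PNH} and \propref{D}.
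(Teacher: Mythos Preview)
Your proof is correct and is exactly the routine verification the paper has in mind; the paper's own proof consists of the single sentence ``This is routine.'' Your write-up simply unpacks that: applying the defining formula~\eqref{define Ph} and observing that conjugation by $h$ carries a coset decomposition of $M$ modulo $bMb\inv$ to one of $hMh\inv$ modulo $hbMb\inv h\inv$, so the sum of characteristic functions collapses to $\Chi_{nhMh\inv}$.
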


\begin{proof}
This is routine.
\end{proof}

\begin{lem}\label{HN covariant}
Let $B$ be a unital $C^*$-algebra,
and suppose we have unitaries $\{S_a:a\in H^+\}$ and projections $\{P(a,m):a\in H^+,m\in M\}$ in $B$ satisfying \eqref{s rep}, \eqref{s p}, and \eqref{p consistent}.
Extend $P$ uniquely to a PN-family $\{P(a,n):a\in H^+,n\in N\}$ using \lemref{SP to PN},
and further extend uniquely to a PNH-family $\{P(h,n):h\in H,n\in N\}$ using \lemref{PN to PNH}.
Also let $\pi:D\to B$ be the unique $*$-homomorphism guaranteed by \propref{D}.
Then $(\pi,S)$ is a covariant representation of $(D,H)$ in $B$.
\end{lem}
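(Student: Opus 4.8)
The plan is to verify the covariance identity $S_h \pi(f) S_h^* = \pi(\alpha_h(f))$ for all $h \in H$ and $f \in D$. Since $D$ is generated by the projections $p(h,n) = \Chi_{nhMh\inv}$ (by \lemref{a to h}, these are exactly the values of $\pi$ on the PNH-family elements) and since $S$ is a unitary representation of $H$ (extending the given isometric representation of $H^+$ via \lemref{H^+ to H}), it suffices by density, linearity, and continuity to check
\[
S_h P(k,n) S_h^* = P(hk, hnh\inv) \midtext{for} h,k\in H,\ n\in N,
\]
because the right-hand side equals $\pi$ applied to $\alpha_h(\Chi_{nkMk\inv}) = \Chi_{hn(hkMk\inv h\inv)} = \Chi_{(hnh\inv)(hk)M(hk)\inv}$, matching the formula $\beta_h$ from \eqref{U}.

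First I would record that \eqref{s pn} of \lemref{SP to PN} already gives this identity when $h, k \in H^+$; the work is to bootstrap from $H^+$ to all of $H$. The natural route is to reduce a general $h \in H$ to the form $h = ab\inv$ with $a, b \in H^+$, so that $S_h = S_a S_b^*$, and to reduce $P(k,n)$ for general $k \in H$ to a projection indexed by an element of $H^+$ using the defining relation \eqref{define Ph} of the PNH-extension. Concretely, given $h, k \in H$, I would choose $b \in H^+$ large enough that $hb, kb, hkb \in H^+$ simultaneously (possible since $H$ is directed), expand both $P(k,n)$ and $P(hk, hnh\inv)$ as finite sums over $M/bMb\inv$ via \eqref{C H} (the version of \eqref{pnh consistent} valid for arbitrary $b \in H^+$), and then apply \eqref{s pn} — or rather its consequence obtained by conjugating by the unitaries $S_a, S_b^*$ — term by term. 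A cleaner variant: write $h = ab\inv$, use that $S_b^* P(k,n) S_b = P(b\inv k, b\inv n b)$ type relations follow from \eqref{define PN}/\eqref{s pn} together with the PNH structure, and push everything into $H^+$ where \eqref{s pn} applies directly.

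The main obstacle will be the bookkeeping of coset representatives when conjugating a finite sum $\sum_{mbMb\inv} P(kb, knmk\inv \cdots)$ by $S_a$ and $S_b^*$: one must check that conjugation permutes the terms correctly and that the index sets $M/bMb\inv$ transform as expected under the relevant inner automorphisms, exactly the kind of reindexing that appeared in the proof of \lemref{PN to PNH}. Once the covariance relation is verified on the generating projections $p(h,n)$, extending to all of $D$ is immediate: $\pi$ is a $*$-homomorphism, $f \mapsto S_h \pi(f) S_h^*$ and $f \mapsto \pi(\alpha_h(f))$ are both $*$-homomorphisms of $D$ (the latter using that $\alpha_h$ is an automorphism of $D$, established in \secref{G}), and they agree on a generating set, hence everywhere. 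This yields that $(\pi, S)$ is a covariant representation of $(D, H)$, as claimed. I would expect the whole argument to be short enough that one might reasonably write "this is routine after \lemref{SP to PN} and \lemref{PN to PNH}," but the honest version carries out the $H^+$-to-$H$ reduction explicitly on the generators.
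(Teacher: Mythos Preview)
Your proposal is correct and follows essentially the same route as the paper. The paper's proof is exactly your ``cleaner variant'': it reduces covariance to checking $S_hP(a,n)S_h^*=P(ha,hnh^{-1})$ on generators, uses $H=H^+(H^+)^{-1}$ together with \eqref{s pn} for the $H^+$ part, and then gives the three-line computation of $S_b^*P(a,n)S_b=P(b^{-1}a,b^{-1}nb)$ via the expansion \eqref{pn consistent} and the PNH identification. One small economy the paper makes that you might adopt: since $D$ is already generated by the PN-family $\{p(a,n):a\in H^+,n\in N\}$, it suffices to verify covariance only for second index $a\in H^+$ rather than for all $k\in H$, which avoids the extra bookkeeping you anticipated.
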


\begin{proof}
By density, covariance is equivalent to the relation
\[
S_hP(a,n)S_h^*=P(ha,hnh\inv)\midtext{for}h\in H,a\in H^+,n\in N,
\]
which, because $H=H^+(H^+)\inv$, follows from \eqref{s pn} and the following computation: if $a,b\in H^+$ and $n\in N$ then
\begin{align*}
S_b^*P(a,n)S_b
&=\sum_{mbMb\inv\in M/bMb\inv}S_b^*P(ab,nama\inv)S_b
\\&=\sum_{mbMb\inv\in M/bMb\inv}P(b\inv ab,b\inv nbb\inv ama\inv b)
\\&=P(b\inv a,b\inv nb).
\qedhere
\end{align*}
\end{proof}

\begin{lem}\label{M to N}
Let $S$ and $U$ be unitary representations of $H$ and $M$, respectively, in a unital $C^*$-algebra $B$, and suppose relation \eqref{s u} holds.
Then $U$ extends uniquely to a unitary representation of $N$ in $B$ such that
\begin{equation}\label{sun}
S_hU(n)S_h^*=U(hnh\inv)\midtext{for}h\in H,n\in N.
\end{equation}
\end{lem}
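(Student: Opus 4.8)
The plan is to define the extension by the evident formula on the upward-directed union $N=\bigcup_{a\in H^+}a\inv Ma$ provided by \hypref{abelian hyp}, and then to verify three things: well-definedness; that the resulting $U$ is a unitary representation of $N$; and the covariance relation \eqref{sun}. Uniqueness will then be essentially free: if $U'$ is any unitary representation of $N$ extending the given $U$ on $M$ and satisfying \eqref{sun}, then applying \eqref{sun} with $h=a$ to $n=a\inv ma$ gives $S_aU'(n)S_a^*=U'(ana\inv)=U(m)$, so $U'(n)=S_a^*U(m)S_a$ is forced, and hence $U'=U$.

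First I would record two elementary facts about $H^+$. It is closed under multiplication, and since $H$ is directed by $a\le b\iff b\in aH^+$ (as observed above) and $H$ is abelian, $H^+$ is itself upward directed, with $c\in aH^+$ forcing $ca\inv\in H^+$. Consequently the family $\{a\inv Ma:a\in H^+\}$ of subgroups of $N$ is upward directed with union $N$; and if $n=a\inv ma$ with $a\in H^+,m\in M$ and $c\in H^+$ satisfies $ca\inv\in H^+$, then $cnc\inv=(ca\inv)m(ca\inv)\inv\in M$, and using \eqref{s u} for the element $ca\inv$ of $H^+$ together with the commutativity of the unitaries $\{S_h:h\in H\}$ one computes $S_c^*U(cnc\inv)S_c=S_a^*U(m)S_a$. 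Now set $U(n)=S_a^*U(m)S_a$ whenever $n=a\inv ma$; given a second representation $n=b\inv kb$, choosing $c\in H^+$ above both $a$ and $b$ and applying the previous identity to each shows $S_a^*U(m)S_a=S_c^*U(cnc\inv)S_c=S_b^*U(k)S_b$, so $U$ is a well-defined map $N\to B$, and taking $a=e$ shows it extends the given $U$ on $M$.

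Next, each $U(n)$ is manifestly unitary, and for $n_1,n_2\in N$ one uses directedness to pick a single $a\in H^+$ with $n_1,n_2\in a\inv Ma$, writes $n_i=a\inv m_ia$, and reads off $U(n_1n_2)=S_a^*U(m_1m_2)S_a=U(n_1)U(n_2)$ from the fact that $U|_M$ is a homomorphism; thus $U$ is a unitary representation of $N$. For the covariance \eqref{sun}, write $h=bc\inv$ with $b,c\in H^+$ and $n=a\inv ma$ with $a\in H^+,m\in M$. Since $H$ is abelian, $hnh\inv$ is the conjugate of $m$ by $x:=acb\inv\in H$; choosing (again by directedness of $H^+$) an $e\in H^+$ with $y:=ex=eacb\inv\in H^+$, we get $hnh\inv=y\inv(eme\inv)y$ with $eme\inv\in M$, so $U(hnh\inv)=S_y^*U(eme\inv)S_y$ by the definition above. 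Expanding $S_y$ and $S_h$ as products of the commuting unitaries $S_a,S_b,S_c,S_e$ and rewriting $U(eme\inv)=S_eU(m)S_e^*$ via \eqref{s u}, both $U(hnh\inv)$ and $S_hU(n)S_h^*$ reduce to $S_bS_c^*S_a^*U(m)S_aS_cS_b^*$, which proves \eqref{sun}.

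The main obstacle is the bookkeeping in the covariance step: because $h$ need not lie in $H^+$, one cannot apply \eqref{s u} to $h$ directly, and must twice invoke directedness of $H^+$ to push the relevant products back inside $H^+$; commutativity of $H$ is exactly what makes the two sides collapse to the same word in the unitaries $S_a,S_b,S_c,S_e$. The well-definedness argument is of the same flavour but milder.
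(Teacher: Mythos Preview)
Your proof is correct and follows essentially the same approach as the paper: define $U(n)=S_a^*U(m)S_a$ on the directed union $N=\bigcup_{a\in H^+}a\inv Ma$, check consistency via directedness, and verify the covariance relation by expanding in the commuting unitaries $S_h$. The paper's covariance computation is slightly more direct (it avoids your auxiliary element by writing $S_{ab\inv}S_c^*=S_{cb}^*S_a$ and applying \eqref{s u} to $a$ immediately), and you should avoid using $e$ as a bound variable since it already denotes the identity, but these are cosmetic points.
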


\begin{proof}
Since $N$ is the union of the upward-directed family of subgroups $a\inv Ma$ for $a\in H^+$, to define a representation of $N$ it suffices to find representations $U_a:a\inv Ma\to\BB(X)$ such that
\[
U_{ab}|_{a\inv Ma}=U_a.
\]
It is routine to verify that
\[
U_a(a\inv ma):=S_a^*U(m)S_a
\]
does the job, and then \eqref{sun} follows from the following computation: for $a,b,c\in H^+$ and $m\in M$ we have
\begin{align*}
S_{ab\inv}U(c\inv mc)S_{ab\inv}^*
&=S_{ab\inv}S_c^*U(m)S_cS_{ab\inv}^*
\\&=S_{cb}^*S_aU(m)S_a^*S_{cb}
\\&=S_{cb}^*U(ama\inv)S_{cb}
\\&=U(ab\inv c\inv mcba\inv).
\end{align*}
Finally, the uniqueness follows quickly from the relation \eqref{sun}.
\end{proof}

\subsection*{Proofs of the theorems}

In each of Theorems~\ref{pCp} and \ref{pAp} we are given an isometric representation $S$ of $H^+$ in a unital $C^*$-algebra $B$.
We first show how to use the dilation technique originally developed  for the abelian case (independently) by 
Brehmer \cite{B} and 
It\^o \cite{Ito}. 
However, we follow closely the approach of Douglas in \cite{RGD}. 
The result has been generalised to nonabelian groups in \cite{PRIsometries, LRmultiplier, lac:corner}, but since we have no application in mind for the nonabelian case, we have kept everything abelian in order to simplify the arguments.

First represent $B$ faithfully and nondegenerately on a Hilbert space $X$.
\cite[Theorem~1]{RGD} constructs a Hilbert space $\wilde X$, a homomorphism $\wilde S$ from $H^+$ to the unitary group of $\wilde X$, and an isometric embedding $T:X\to\wilde X$ such that
\begin{align*}
\wilde S_aT&=TS_a\midtext{for}a\in H^+\\
\wilde X&=\bar{\bigcup_{a\in H^+}\wilde S_a^*T(X)}.
\end{align*}
Actually, we will not need Douglas' construction of $\wilde X$; in fact, we only need the above properties.

\begin{proof}[Proof of \thmref{pCp}]
As above, dilate $S$ to a homomorphism $\wilde S$ from $H^+$ to the unitary group of $\wilde X$,
together with an isometry $T:X\to \wilde X$ intertwining $S$ and $\wilde S$.
Apply \lemref{H^+ to H} to extend $\wilde S$ to a unitary representation of $H$ on $\wilde X$.

We now use the dilation method to construct projections $\{\wilde P(a,m):a\in H^+,m\in M\}$ on $\wilde X$
such that $\wilde S$, $\wilde P$ satisfy \eqref{s p}--\eqref{p consistent}
and
\begin{equation}\label{TP}
TP(a,m)=\wilde P(a,m)T\midtext{for}a\in H^+,m\in M.
\end{equation}

Since $\wilde X$ is the closure of the union of the upward-directed family of closed subspaces $\wilde X_b=\wilde S_b^*T(X)$ indexed by $b\in H^+$,
to construct a projection $\wilde P(a,m)$ on $\wilde X$ it suffices to find projections $\wilde P_b(a,m)$ on the subspaces $\wilde X_b$ such that
\begin{equation}\label{P consistent}
\wilde P_{bc}(a,m)|_{\wilde X_b}=\wilde P_b(a,m).
\end{equation}
The appropriate definition is dictated by our desire for covariance: for $\xi\in X$ define
\begin{equation}\label{define wp}
\wilde P_b(a,m)\wilde S_b^*T\xi
=\wilde S_b^*TP(ba,bmb\inv)\xi.
\end{equation}
It follows quickly that $\wilde P_b(a,m)$ is a projection on $\wilde X_b$.

To see the consistency relation \eqref{P consistent}, let $\xi\in X$ and compute:
\begin{align*}
\wilde P_b(a,m)\wilde S_b^*T\xi
&=\wilde S_b^*TP(ba,bmb\inv)\xi
\\&=\wilde S_{cb}^*TS_cP(ba,bmb\inv)\xi
\\&=\wilde S_{cb}^*TP(cba,cbmb\inv c\inv)S_c\xi
\\&=\wilde P_{cb}(a,m)\wilde S_{cb}^*TS_c\xi
\\&=\wilde P_{cb}(a,m)\wilde S_b^*T\xi.
\end{align*}
Thus there is a unique projection $\wilde P(a,m)$ on $\wilde X$ extending the operators $\wilde P_b(a,m)$ on the subspaces $\wilde X_b$.
It suffices to verify relations \eqref{s p}--\eqref{p consistent} after post-multiplying with $\wilde S_c^*T$ for any $c\in H^+$:
we have
\begin{align*}
\wilde S_b\wilde P(a,m)\wilde S_b^*\wilde S_c^*T
&=\wilde S_b\wilde S_{cb}^*TP(cba,cbmb\inv c\inv)
\\&=\wilde P(ba,bmb\inv)\wilde S_c^*T,
\end{align*}
and
\begin{align*}
\wilde P(a,k)\wilde S_c^*T
&=\wilde S_c^*TP(ca,ckc\inv)
\\&=\wilde S_c^*T\sum_{mbMb\inv\in M/bMb\inv}P(cab,ckc\inv cama\inv c\inv)
\\&=\sum_{mbMb\inv\in M/bMb\inv}\wilde P(ab,kama\inv)\wilde S_c^*T.
\end{align*}
Finally, \eqref{TP} follows from \eqref{define wp} with $b=e$.

Next, apply Lemmas~\ref{SP to PN}, \eqref{PN to PNH}, \eqref{HN covariant}, and \propref{D} to extend $\wilde P$ to a PNH-family $\{\wilde P(h,n):h\in H,n\in N\}$ and get a representation $\pi$ of $D$ on $\wilde X$.
The covariant representation $(\pi,\wilde S)$ of $(D,H)$ gives a representation $\wilde \Pi:=\pi\times \wilde S$ of $C=D\rtimes H$ on $\wilde X$.
Define $\Pi:pCp\to \BB(X)$ by
\[
\Pi(z)=T^*\wilde\Pi(z)T.
\]
Since $\wilde\Pi$ takes $p=\Chi_M$, the unit of the corner $pCp$, to $\wilde P(e,e)=TT^*$,
it takes $pCp$ to the corner $TT^*\BB(\wilde X)TT^*$, and it follows that $\Pi$ is a representation.
For $a\in H^+$ and $m\in M$ we have
\begin{align}
\Pi(s_a)&=T^*\wilde S_aT=S_a\label{s}
\\
\Pi(p(a,m))&=T^*\wilde P(a,m)T=P(a,m).\label{p}
\end{align}
Since $\{s_a,p(a,m):a\in H^+,m\in M\}$ generates $pCp$,
it follows that $\Pi$ maps $pCp$ into the $C^*$-algebra $B$,
and is moreover the unique homomorphism satisfying \eqref{s}--\eqref{p}.
\end{proof}

\begin{proof}[Proof of \thmref{pAp}]
Apply \lemref{SU to SP} to get an SP-family $\{S_a,P(a,m):a\in H^+,m\in M\}$,
and then apply \thmref{pCp} and its proof to get a covariant representation $(\pi,\wilde S)$ of $(D,H)$ on a Hilbert space $\wilde X$,
together with an isometry $T:X\to \wilde X$
and a PHN-family $\{\wilde P(h,n):h\in H,n\in N\}$.

We now construct
a unique unitary representation $\wilde U$ of $M$ on $\wilde X$ such that
\begin{align}
\label{UT}
\wilde U(m)T&=TU(m)
\\
\label{SU}
\wilde S_a\wilde U(m)\wilde S_a^*&=\wilde U(ama\inv)
\end{align}
for $a\in H^+,m\in M$.
To construct
a unitary operator $\wilde U(m)$ on $\wilde X$ it is enough to find a unitary $\wilde U_a(m)$
on each subspace $\wilde X_a=\wilde S_a^*T(X)$
such that
\begin{equation}\label{U consistent}
\wilde U_{ab}(m)|_{\wilde X_a}=\wilde U_a(m).
\end{equation}
For $\xi\in X$ define
\[
\wilde U_a(m)\wilde S_a^*T\xi=\wilde S_a^*TU(ama\inv)\xi.
\]
Since $\wilde S_a^*$ restricts to an isometric isomorphism of the subspace $T(X)$ onto the subspace $\wilde X_a$, with inverse the corresponding restriction of $\wilde S_a$, it follows that $\wilde U_a(m):\wilde X_a\to \wilde X_a$ is unitary.

To see the consistency relation \eqref{U consistent}, let $\xi\in X$ and compute:
\begin{align*}
\wilde U_a(m)\wilde S_a^*T\xi
&=\wilde S_a^*TU(ama\inv)\xi
\\&=\wilde S_{ab}^*TU(abmb\inv a\inv)S_b\xi
\\&=\wilde U_{ab}(m)\wilde S_a^*T\xi.
\end{align*}

Thus there is a unique unitary operator $\wilde U(m)$ on $\wilde X$ extending the operators $\wilde U_a(m)$ on the subspaces $\wilde S_a^*T(X)$.
It follows quickly that this gives a unitary representation $\wilde U$ of $M$.
\eqref{UT} follows from
\[
\wilde U(m)T=\wilde U(m)\wilde S_e^*T=\wilde S_e^*TU(m)=TU(m).
\]
For \eqref{SU} it suffices to observe that for all $b\in H^+$ we have
\begin{align*}
\wilde S_a\wilde U(m)\wilde S_a^*\wilde S_b^*T
&=\wilde S_a\wilde S_{ba}^*TU(bama\inv b\inv)
\\&=\wilde S_b^*TU(bama\inv b\inv)
\\&=\wilde U(ama\inv)\wilde S_b^*T.
\end{align*}

Now apply \lemref{M to N} to extend $\wilde U$ to a unitary representation of $N$ on $\wilde X$ such that
\[
\wilde S_h\wilde U(n)\wilde S_h^*=\wilde U(hnh\inv)\midtext{for}h\in H,n\in N.
\]
Then it is obvious that
\[
hn\mapsto \wilde S_h\wilde U(n)
\]
gives a unitary representation of $G$.

We will show that $(\pi,\wilde S\wilde U)$ is a covariant representation of $(D,G)$:
since $(\pi,\wilde S)$ is a covariant representation of $(D,H)$, by density it is enough to show
\[
\wilde U(k)\wilde P(a,n)\wilde U(k)^*=\wilde P(a,kn)\midtext{for}k\in N,
\]
which follows immediately from the constructions.

The remainder of the proof follows the same lines as \thmref{pCp}.
\end{proof}

As in \cite[Remark~3.7]{CLintegral}, we deduce:

\begin{cor}\label{pAp unique}
$pAp$ is, up to isomorphism, the unique $C^*$-algebra generated by an SU-family.
\end{cor}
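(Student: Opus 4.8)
The plan is to deduce \corref{pAp unique} from \thmref{pAp} by a standard universal-object argument. By \lemref{SU in pAp}, the algebra $pAp$ is generated by the SU-family $\{s_a,u(m):a\in H^+,m\in M\}$ constructed there, so $pAp$ is \emph{a} $C^*$-algebra generated by an SU-family. It remains to show that any two such are isomorphic via an isomorphism matching up the respective generating families; this is precisely the content of the universal property in \thmref{pAp}, once one checks the map it produces is bijective.

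First I would set up the symmetric use of \thmref{pAp}. Let $B$ be any $C^*$-algebra generated by an SU-family $\{S_a,U(m):a\in H^+,m\in M\}$. Applying \thmref{pAp} to this SU-family yields a homomorphism $\Pi\colon pAp\to B$ with $\Pi(s_a)=S_a$ and $\Pi(u(m))=U(m)$. Since $B$ is generated (as a $C^*$-algebra) by $\{S_a,U(m)\}$, the image $\Pi(pAp)$ is a $C^*$-subalgebra of $B$ containing all the generators, hence $\Pi$ is surjective. For injectivity, I would go in the reverse direction: the family $\{s_a,u(m)\}$ in $pAp$ is itself an SU-family (that is exactly \lemref{SU in pAp}), so \thmref{pAp} applied to it, regarded as an SU-family in the $C^*$-algebra $B$ via the $S$'s and $U$'s, together with \thmref{pAp} applied to $\{S_a,U(m)\}$ inside $pAp$'s image, produces a homomorphism $\Pi'\colon B'\to pAp$ in the other direction --- more precisely, I apply the universal property of whichever algebra is playing the role of ``$pAp$'' in each direction. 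The uniqueness clause in \thmref{pAp} then forces $\Pi'\circ\Pi=\id_{pAp}$ and $\Pi\circ\Pi'=\id_B$, since each composite is a homomorphism fixing the generating SU-family and hence must be the identity by uniqueness. Thus $\Pi$ is an isomorphism carrying $\{s_a,u(m)\}$ onto $\{S_a,U(m)\}$.

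The only subtlety is purely formal: \thmref{pAp} is stated as ``for every SU-family in a unital $C^*$-algebra $B$ there is a unique homomorphism $\Pi\colon pAp\to B$ with $\Pi(s_a)=S_a$, $\Pi(u(m))=U(m)$'', and to run the argument symmetrically I must know that an abstract $C^*$-algebra generated by an SU-family satisfies the \emph{same} universal property, with its own generators in the role of $s_a,u(m)$. This is immediate from \thmref{pAp} itself: if $B$ is generated by an SU-family $\{S_a,U(m)\}$, then for any SU-family in a further algebra $B''$ the composite of (the inverse of) the surjection $pAp\to B$ with the map $pAp\to B''$ gives the required homomorphism $B\to B''$, and uniqueness is inherited since the $\{S_a,U(m)\}$ generate $B$. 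So there is no real obstacle; the main point to state carefully is that \emph{both} surjections are in fact isomorphisms, which follows from the two compositions being identities by the uniqueness part of \thmref{pAp}. This is the version of ``a universal object is unique up to unique isomorphism'' appropriate to the present setting, and the reference to \cite[Remark~3.7]{CLintegral} signals that exactly this bookkeeping is intended.
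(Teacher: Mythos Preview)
Your argument has a genuine gap: it is circular. \thmref{pAp} furnishes homomorphisms \emph{out of} $pAp$; it tells you $pAp$ is universal among unital $C^*$-algebras equipped with an SU-family, and hence gives you the surjection $\Pi\colon pAp\to B$. But to run the ``uniqueness of universal objects'' argument you need a homomorphism in the other direction, $\Pi'\colon B\to pAp$, i.e., you need $B$ to be universal as well. Your attempt to manufacture this in the last paragraph invokes ``(the inverse of) the surjection $pAp\to B$'' --- but whether that inverse exists is precisely what you are trying to prove. There is no non-circular way to extract a map $B\to pAp$ from \thmref{pAp} alone.

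The underlying issue is that ``generated by an SU-family'' is strictly weaker than ``universal for SU-families''. A priori, any nonzero quotient of $pAp$ is still generated by the images of the $s_a$ and $u(m)$, and these images still form an SU-family; so without further input there could be many non-isomorphic $C^*$-algebras generated by SU-families, all quotients of the universal one. What rules this out is \corref{enveloping corner}, whose proof (under \hypref{regular}) shows that $pAp$ is simple. Simplicity forces the surjection $\Pi\colon pAp\to B$ from \thmref{pAp} to be injective (as $B\ne 0$, the $S_a$ being isometries in a unital algebra), hence an isomorphism. This is why the paper's proof invokes both \thmref{pAp} and \corref{enveloping corner}; the latter ingredient is not optional.
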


\begin{proof}
This follows immediately from \thmref{pAp} and \corref{enveloping corner}.
\end{proof}

\section{The Cuntz-Li ring $C^*$-algebras}
\label{ring}

As in \secref{integral}, let $R$ be an integral domain that is not a field, and keep the other assumptions and notation from \secref{integral}.
We will apply our general theory to give an independent proof of the following result of Cuntz and Li:

\begin{thm}
\label{Cuntz Li}
The $C^*$-algebra $\fA[R]$ is 
a \(full\) corner of the crossed product $D\rtimes_\alpha G$,
and hence is
simple and purely infinite.
\end{thm}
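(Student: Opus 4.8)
The plan is to recall the Cuntz–Li presentation of $\fA[R]$ and match it with the universal property of the corner $pAp$ established in \secref{gnr}. Recall that in \cite{CLintegral} the algebra $\fA[R]$ is generated by a unitary representation $\{u^b : b\in R\}$ of the additive group of $R$ together with isometries $\{s_a : a\in R\minus\{0\}\}$, subject to the relations $s_as_{a'} = s_{aa'}$, $s_au^b = u^{ab}s_a$, $u^bu^{b'}=u^{b+b'}$, and the crucial range relation $\sum_{b\in R/aR} u^b s_as_a^* u^{-b} = 1$. With the identifications of \secref{integral}---namely $N=Q$, $H=Q^\times$, $M=R$, so that $H^+=\{a\in H: aMa\inv\subseteq M\}$ is exactly $R\minus\{0\}$ acting by multiplication, and $M/aMa\inv = R/aR$ is finite by \eqref{ring hyp}---these are precisely the defining relations of an SU-family (Definition of SU-family: relations \eqref{s rep}, \eqref{s u}, \eqref{u consistent}, \eqref{u rep}) in a unital $C^*$-algebra, since $ama\inv = ab$ for $a\in R\minus\{0\}$, $m=b\in R$. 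Thus $\fA[R]$ \emph{is} the universal $C^*$-algebra generated by an SU-family.

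The first step is therefore to check that \hypref{abelian hyp} holds in this setting: $H=Q^\times$ is abelian, and $N=Q=\bigcup_{a\in R\minus\{0\}} a\inv R = \bigcup_{a\in H^+} a\inv Ma$, since every element of the fraction field has the form $b/a$ with $b\in R$, $a\in R\minus\{0\}$. Next I would invoke \corref{pAp unique}: $pAp$ is, up to isomorphism, the unique $C^*$-algebra generated by an SU-family, where $p=\Chi_M$ and $A = D\rtimes_\alpha G$. Since $\fA[R]$ is also generated by an SU-family and is universal for that data (this is essentially \cite[Remark~3.7]{CLintegral}, or follows from the defining presentation together with the fact that one can always represent an SU-family), we get $\fA[R]\cong pAp$. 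Finally, the corner $pAp$ is full in $A$: $p=p_M$ is full in $A_0$ by the lemma establishing $A_0=\spn A_0pA_0$, hence full in its closure $A$; so $\fA[R]$ is a full corner of $D\rtimes_\alpha G$. Simplicity and pure infiniteness of $D\rtimes_\alpha G$ come from \thmref{domain theorem}, and both properties pass to (full) corners, giving the last clause.

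The main obstacle is the bookkeeping required to verify that the Cuntz–Li relations translate \emph{exactly} into the SU-family relations under the dictionary $N=Q$, $H=Q^\times$, $M=R$---in particular that the index sets $R/aR$ appearing in Cuntz and Li's range relation coincide with the cosets $M/aMa\inv$ of our relation \eqref{u consistent}, and that conjugation $m\mapsto ama\inv$ on $M=R$ by $a\in H^+$ is nothing but multiplication $b\mapsto ab$ on $R$. Once this identification is in place, everything reduces to the universal properties already proved: \thmref{pAp} (via \corref{pAp unique}) on our side and the Cuntz–Li presentation on theirs. I would also remark explicitly that because we have proved $D\rtimes_\alpha G$ simple and purely infinite \emph{independently} of \cite{CLintegral} (in \thmref{domain theorem}, which rests only on \thmref{simple} and amenability of the $ax+b$ group), this recovers the Cuntz–Li simplicity and pure-infiniteness result rather than assuming it.
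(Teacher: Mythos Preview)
Your proposal is correct and follows essentially the same route as the paper: invoke \corref{pAp unique} to identify $\fA[R]$ with the corner $p(D\rtimes_\alpha G)p$ via the SU-family presentation, then appeal to \thmref{domain theorem} for simplicity and pure infiniteness of the crossed product, which passes to full corners. You supply more explicit verification (of \hypref{abelian hyp}, of the dictionary between Cuntz--Li's relations and the SU-family relations, and of fullness of $p$) than the paper's terse proof, but the logical skeleton is identical.
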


Note that in \cite{CLintegral}
Cuntz and Li denote our group $G=N\rtimes H$ by $P_{Q(R)}$.
Recall that Cuntz and Li 
studied $\fA[R]$ using
auxiliary objects 
$\what R$, $\RR$, and
$\fD(R)$.

Our $\bar M$ coincides with $\what R$, since
\cite{CLintegral} constructs the topological group $\what R$ as the completion of $R$.
Further, our $\bar N$ can be identified with $\RR$,
since \cite{CLintegral} constructs $\RR$ as an inductive limit of the maps on $\what R$ given by multiplication by elements of the directed multiplicative semigroup $R$.
Thus, by \cite[Observation~4.3]{CLintegral}, our $D$ is isomorphic to $\fD(R)$.

Strictly speaking, to make the connection with \cite{CLintegral} valid, we must note that
this time we can get away with a slightly smaller family
\[
\UU=\{aR:a\in R\minus\{0\}\},
\]
rather than letting $a$ run through all of $Q^\times$ as in the previous sections.
Then from the properties of (non-field) integral domains, 
conditions~\eqref{finite} and~\eqref{effective} 
are satisfied.
That using this smaller version of $\UU$ will not change our results follows from:

\begin{lem}
\label{smaller}
$\UU$ is cofinal in $\UU'=\{aR:a\in Q\minus\{0\}\}$.
\end{lem}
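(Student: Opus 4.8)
The plan is to show that every element $aR$ with $a \in Q \minus \{0\}$ contains some element $bR$ with $b \in R \minus \{0\}$. Write $a = r/s$ with $r, s \in R \minus \{0\}$ (possible since $Q$ is the field of fractions of $R$). The key observation is that $sa = r \in R$, so $r \in aR$, and hence the principal ideal $rR \subseteq aR$. Since $r \in R \minus \{0\}$, we have $rR \in \UU$, which gives the required cofinality: for every $aR \in \UU'$ there exists $rR \in \UU$ with $rR \subseteq aR$.

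More carefully, I would verify the set inclusion directly: if $x \in rR$, say $x = rt$ with $t \in R$, then $x = (sa)t = a(st)$, and since $st \in R$ we get $x \in aR$; thus $rR \subseteq aR$. This is the entire content of the lemma, so the proof is just these two or three lines. There is essentially no obstacle here — the only thing to be slightly careful about is that $r \neq 0$ (which holds because $a \neq 0$ and $s \neq 0$ in the integral domain $R$, so $r = sa \neq 0$), ensuring that $rR$ genuinely belongs to $\UU$ rather than being the zero ideal.

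Finally, I would remark that cofinality is exactly what is needed to conclude that all the constructions and results of the preceding sections — in particular the completion $\bar N$, the algebra $D$, and \thmref{domain theorem} — are unchanged when $\UU$ is replaced by the smaller family, since passing to a cofinal subfamily of a filter base changes neither the induced subgroup topology on $N$ nor the resulting Hausdorff completion (\lemref{completion}), and hence neither $D = C^*(\PP)$ nor the crossed product. So there is no real difficulty; the lemma is a routine bookkeeping step.
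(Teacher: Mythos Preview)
Your proof is correct and follows essentially the same argument as the paper: write $a\in Q\minus\{0\}$ as a quotient of nonzero ring elements and observe that the numerator generates a principal ideal of $R$ contained in $aR$. The paper's version is the one-line computation $aR=\tfrac{s}{t}R\supset sR$ for $a=s/t$, which is exactly your $rR\subseteq aR$ with the variables relabeled.
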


\begin{proof} Clearly $\UU\subset\UU'$.
If
\[
a=\frac st\midtext{with}s,t\in R,t\ne 0,
\]
then
\[
aR=\frac st R\supset sR
\]
and it follows that $\UU$ is cofinal in $\UU'$ and that they define the same group topology on $Q$.
\end{proof}

\begin{proof}
[Proof of \thmref{Cuntz Li}]
\corref{pAp unique} 
shows that the corner $p(D\rtimes_\alpha G)p$ is the unique $C^*$-algebra for the same generators and relations as \cite[Definition~2.1]{CLintegral} use to define $\fA[R]$, so we have $\fA[R]\cong p(D\rtimes_\alpha G)p$.
By \thmref{domain theorem}
the crossed product $D\rtimes_\alpha G$ is simple and purely infinite, and therefore so is the (full) corner $p(D\rtimes_\alpha G)p$.
\end{proof}



\providecommand{\bysame}{\leavevmode\hbox to3em{\hrulefill}\thinspace}
\providecommand{\MR}{\relax\ifhmode\unskip\space\fi MR }
\providecommand{\MRhref}[2]{%
  \href{http://www.ams.org/mathscinet-getitem?mr=#1}{#2}
}
\providecommand{\href}[2]{#2}

\end{document}